\documentclass[a4paper,reqno]{amsart}

\usepackage[centertags]{amsmath}
\usepackage[margin=1in]{geometry}
\usepackage{amsmath}
\usepackage{amsfonts}
\usepackage{amssymb}
\usepackage{verbatim}
\usepackage{enumerate}


\usepackage{graphicx}

\newcommand{\F}{\mathcal{F}}

\newcommand{\I}{\mathcal{I}}

\newcommand{\R}{\mathbb R}
\newcommand{\Z}{\mathbb Z}



\newtheorem{theorem}{Theorem}
\newtheorem{lemma}{Lemma}
\newtheorem{rem}{Remark}

\newtheorem{example}{Example}
\newtheorem{definition}{Definition}

\newtheorem{corollary}{Corollary}

\title[Ising model with flipping sets and fast
decreasing temperature]{Stochastic Ising model with flipping sets of
spins and fast decreasing temperature}

\author{Roy Cerqueti}
\address{University of Macerata, Department of Economics and Law.
Via Crescimbeni 20, I-62100, Macerata, Italy}
\email{roy.cerqueti@unimc.it}

\author{Emilio De Santis}
\address{University of Rome La Sapienza, Department of Mathematics.
Piazzale Aldo Moro, 5, I-00185, Rome, Italy}
\email{desantis@mat.uniroma1.it}

\begin{document}

\begin{abstract}
This paper deals with the stochastic Ising model with a temperature
shrinking to zero as time goes to infinity. A generalization of the
Glauber dynamics is considered, on the basis of the existence of
simultaneous flips of some spins. Such dynamics act on a wide class
of graphs which are periodic and embedded in $\R^d$. The
interactions between couples of spins are assumed to be quenched
i.i.d. random variables following a Bernoulli distribution with
support $\{-1,+1\}$. The specific problem here analyzed concerns the
assessment of how often (finitely or infinitely many times, almost
surely) a given spin flips. Adopting the classification proposed in
\cite{GNS}, we present conditions in order to have models of type
$\mathcal{F}$ (any spin flips finitely many times), $\mathcal{I}$
(any spin flips infinitely many times) and $\mathcal{M}$ (a mixed
case). Several examples are provided in all dimensions and for
different cases of graphs. The most part of the obtained results
holds true for the case of zero-temperature and some of them for the
cubic lattice $\mathbb{L}_d=(\mathbb{Z}^d, \mathbb{E}_d)$ as well.
\medskip
\medskip
\newline
\emph{Keywords:} Ising model; Glauber dynamics; Fast decreasing
temperature; Graphs.
\newline
\medskip
\emph{AMS MSC 2010:} 60K35, 82B44.
\end{abstract}

\maketitle

\section{Introduction}

%
%
%
%
%
%
%
%
%
%
%
%


In this paper we deal with a class of non homogeneous Markov
processes $(\sigma(t): t \geq 0)$ in the frame of random
environment. In particular, we consider a generalization of Glauber
dynamics of the Ising model, see e.g. \cite{Liggett}, in the case of
flipping sets whose cardinality is smaller than or equal to a given
$k\in \mathbb{N}$.  However, accordingly to Glauber, also our
dynamics will satisfy the reversibility property with respect to the
Gibbs measure of the Ising model. The Markov process describes the
stochastic evolution of spins, which are binary variables $\pm 1$ on
the vertices of an infinite periodic graph $G=(V,E)$ with finite
degree. Such a class of graphs includes the meaningful standard case
of $d$-dimensional cubic lattices $\mathbb{L}_d=(\mathbb{Z}^d,
\mathbb{E}_d)$, for $d \in \mathbb{N}$. The \emph{interactions}
$\mathcal{J} = (J_e : e \in E )$ are deterministic or i.i.d. random
variables having Bernoulli distribution $ \mu_{\mathcal{J}}( J_e =
+1 ) = \alpha $ and $ \mu_{\mathcal{J}}( J_e = -1 ) =1- \alpha$,
with $\alpha \in [0,1]$, and they constitute the random environment.
The case $\alpha =1$ (resp. $\alpha =0$) corresponds to the
interactions of the homogeneous ferromagnetic (resp.
antiferromagnetic) Ising model.

The \emph{temperature profile} of the model is a function $T:
[0,\infty) \to [0,\infty) $, which is measurable with respect to the
Borel $\sigma$-algebra $\mathcal{B} (\mathbb{R} )$ with $T(t)$
denoting the \emph{temperature at time $t$}, for any $t \in
[0,\infty) $. In most of the results it is assumed that $\lim_{t \to
\infty}T(t)=0$ with a specific reference to $T$ fast decreasing to
zero,  this will be important due to its connections with the well
studied case of $T\equiv 0 $. 


The initial configuration of the Markov process is $\sigma(0) \in
\{-1,+1\}^V$. Its components are assumed to be given or i.i.d. and
randomly selected by a Bernoulli measure $\nu_{\sigma (0)}$ with
parameter $\gamma \in [0,1]$.

In the case of zero temperature stochastic Ising models with
homogeneous interactions, the following question is of particular
relevance:
\begin{itemize}
    \item[\textbf{(Q)}] \emph{Does a given spin on $v \in V $ flip
infinitely many times almost surely?}
\end{itemize}

This problem is indeed paradigmatic in this context (see e.g.
\cite{CDN02,CM06,
DEKNS,DKNS,DMCarlo,DN03,EN,FSS,GNS,HNexagon,Morris,  tessler}).

However, for constant and positive temperature $ T $, question
\textbf{(Q)} does not make sense because all the spins flip
infinitely many times. Moreover, question \textbf{(Q)} should be
rephrased also in the frame of random interactions. In fact, it is
not always meaningful to deal with single sites, because the random
environment leads to sites differently behaving.

In the deep contribution of Gandolfi, Newman and Stein \cite{GNS}
the authors propose also a classification for these models, which is
a partition of them. Specifically, a model is of type $\mathcal{I}$,
$\mathcal{F}$, and $\mathcal{M}$,
 according to
if all the sites flip infinitely many times (a.s.), finitely many
times (a.s.), or some sites flip infinitely many times  and the
others do it finitely many times (a.s.), respectively. We adopt here
the same classification. Question \textbf{(Q)} becomes:

\begin{itemize}
    \item[\textbf{(Q')}] \emph{Is a model of type $\mathcal{I}$,
$\mathcal{F}$ or $\mathcal{M}$?}
\end{itemize}

 We notice that $\mathcal{F}$
corresponds to the almost surely convergence with respect to the
product topology. Vice versa $\mathcal{I}$ and $\mathcal{M}$
correspond to a.s. no-convergence. In this paper we show that some
universal classes can be identified, in the sense that the graph $G
$ and the parameter $k$ determine the class of the model under mild
conditions on $\alpha, \gamma,  T  $. Indeed, the main part of our
results holds true for $\alpha \in (0,1) $ and $\gamma \in [0,1]$
under some natural requirements on the decay rate of the temperature
profile $T$.


\smallskip

In a very different context, \cite{Arratia1} has shown the
recurrence of annihilating random walks on $\Z^d$, with $d \in
\mathbb{N}$, under very general conditions. This paper has as a
consequence that the one-dimensional stochastic Ising model with
$\alpha=1$, $\gamma \in (0,1)$ and $T \equiv 0$ is of type
$\mathcal{I}$ (see \cite{NNS00}).

In \cite{GNS} there is an analysis of the zero-temperature case for
the cubic lattice $\mathbb{L}_d = (\mathbb{Z}^d, \mathbb{E}_d)$,
$\gamma=1/2$ and different product measures $\mu_{\mathcal{J}}$ over
$\R$ for the interactions $ \mathcal{J}  $. Among the results, the
authors provide a complete characterization for $\mathbb{L}_1$ and
for all the measures $\mu_{\mathcal{J}}$, i.e. they identify the
classes $\mathcal{I}$, $\mathcal{F}$, and $\mathcal{M}$ for each
$\mu_{\mathcal{J}}$. In doing so, \cite{GNS} adapts and extends
\cite{Arratia1} in this context. Moreover, the authors identify the
class $\mathcal{M}$ for $\mathbb{L}_2$ when the measure
$\mu_{\mathcal{J}}$ is a product of Bernoulli with parameter $\alpha
\in (0,1)$.

In \cite{NNS00} there is a treatment of $\alpha=1$ and $\gamma=1/2$
for $\mathbb{L}_2$, where it is proven that the model is of type
$\mathcal{I}$. Under the same conditions, \cite{CDN02} refines
\cite{NNS00} in discussing the recurrence and the growth of some
geometrical structures.

It is also worth noting that \cite{NNS00} analyzes the framework
where $\mu_{\mathcal{J}}$  is a continuous measure over $\R$ with
finite mean, and they find $\mathcal{F}$. The finite mean
restriction has been eliminated in the same setting by \cite{DN03},
and the identified class remains $\mathcal{F}$.

The contribution of \cite{FSS} is for $T \equiv 0$, $\alpha=1$,
$\mathbb{L}_d$ with $d \geq 2$ and $\gamma > \gamma^\star_d $, with
$\gamma^\star_d\in (0,1) $. The authors show that, when $\gamma>
\gamma^\star_d $, the value of any spin converges to  $+1$ a.s.,
hence leading to a model of type $\mathcal{F}$. The paper
\cite{Morris} extends \cite{FSS} and shows that $\lim_{d \to \infty}
\gamma^\star_d=1/2$.

In this last context, in order to prove that, for a large value of
$\gamma$, all the spins converge to $+1$, it is worth mentioning
\cite{CM06} where the stochastic Ising model at zero-temperature on
a $d$-ary regular tree $\mathbb{T}_d$ is analyzed. Analogously to
\cite{FSS, Morris}, the authors prove that there exists $\hat
\gamma_d \in (0,1)$ such that  for $\gamma
> \hat \gamma_d $ all the
spins converge to $+1$. Moreover it is also shown, along with other
results, that $\lim_{d \to \infty} \hat \gamma_d =1/2$.

There are important results for the convergence of the system to the
Gibbs state in the case of low temperature (see e.g.
\cite{DS02Glauber,DS03Torpid,DSM,MT}) or high temperature (see e.g.
\cite{DS02Glauber,DSL,MOS,Mathieu}). In particular, in
\cite{DS02Glauber}, in the framework of random interactions, it is
shown a different approach to the equilibrium measure in relation to
the temperature. We believe that there is a connection between the
behaviour of the stochastic Ising model with $\alpha \in (0,1)$ at
low, constant and positive temperature and at zero-temperature. In
this respect, it seems that the type of the model, $\F$ or
$\mathcal{M}$, at zero-temperature is related to properties of
metastability of the same model at low constant temperature. The
geometric structure of the graph is also relevant in both the cases
of zero- and low constant temperature. In \cite{DS02Glauber,GNS}
some of the main results come out from a combinatorial-geometric
interpretation of the underlying graphs.

For cases different from $\mathbb{L}_d$ at zero-temperature, we
mention \cite{CM06} and \cite{EN}, with tree-related graphs;
\cite{tessler}, where trees and cylinders originating by graphs are
considered; \cite{HNexagon}, where the hexagonal lattice is
explored. In this latter
 paper the authors move from \cite{NNS00}, where it is proven that
sites fixate, and show that the expected value of the cardinality of
the cluster containing the origin becomes infinite when time grows.

As already announced above, we aim to provide an answer to
\textbf{(Q')} in the framework of  general graphs. In particular, we
deal with graphs which are periodic and embedded in $\R^d$.
In so doing, we contribute to the
 literature, which is mainly focussed on $\mathbb{L}_1$ (see e.g.
\cite{GNS}), $\mathbb{L}_2$ (see e.g. \cite{CDN02,GNS, NNS00}) and
the hexagonal lattice (see e.g. \cite{HNexagon}) with the 
exceptions of the general dimensional cubic lattices $\mathbb{L}_d$
in \cite{FSS,Morris}.

Notice that this topic is important either at a purely theoretical
level as well as in the applied science. Indeed, we mention
\cite{DMCarlo,Mendes,Stauffer,Tamuz}, where applications of Ising
models to social science are presented. In particular,
\cite{DMCarlo} deals with Glauber dynamics at zero-temperature over
random graphs, where nodes are social entities and the spatial
structure captures the social connections among the nodes. For a
review of the relevant contribution on the so-called
\emph{sociophysics}, refer to \cite{Stauffer}.

This paper adds to the literature on the stochastic Ising models as
follows:
\begin{itemize}
\item[$(i)$] We allow for simultaneous flips of the spins, hence
allowing for flipping regions. This statement has an interest under
a theoretical perspective and it seems to be also reasonable for the
development of real-world decision processes (think at the changing
of opinions process of groups of connected agents rather than of
single individuals). The cardinality of the flipping sets is
constrained by a parameter $k$, which will be formalized below. Some
material related to this aspect can be found in \cite{tessler}.
\item[$(ii)$] The considered graphs are periodic, infinite and with
finite degree. This framework naturally includes  $\mathbb{L}_d$,
for each $d \in \mathbb{N}$. This generalization allows us to
provide theoretical results and physically consistent examples (like
crystal lattices) outside the restrictive world of $\mathbb{L}_d$.
\item[$(iii)$] The temperature is taken not necessarily zero.
Specifically, we consider a temperature fast decreasing to zero and
 we require only in one result its positivity. In doing so, we develop
a theory on the reasonable situation of a temperature changing
continuously in time, without assuming the jump from infinite to
zero. The framework of temperature fast decreasing to zero includes
also the case of $T \equiv 0$.
\end{itemize}
In more details, Lemma \ref{serve} is a technical result giving the
framework we deal with. Specifically, it provides some grounding
consequences of the definition of temperature profile fast
decreasing to zero, which are useful in the rest of the paper.

Theorem \ref{opposition} gives sufficient conditions for the
temperature profile having a similar or different behaviour of the
zero-temperature case. It is shown that the model is of type
$\mathcal{I}$ under an asymptotic condition for the temperature
profile. This result depends on the Hamiltonian, and can then be
rewritten for general Gibbs models endowed with a Glauber-type
dynamics. Some conditions of Theorem \ref{opposition} can be
replaced with weaker ones  over graphs with even-degree sites (e.g.
$\mathbb{L}_d$).

In Theorem \ref{infiniti} we present a result stating that the model
is of type $\mathcal{M}$ or $\I$, under some hypotheses.
Specifically, it is required that the temperature profile $T$ is
fast decreasing to zero, positive and the graph belongs to the
rather wide class of $k$-\emph{stable}
$d$-$\mathcal{E}$\emph{graphs} (see next section for the formal
definition of this concept). Such a class contains also the cubic
lattices $\mathbb{L}_d$, with $d \geq 2$. We believe that Theorem
\ref{infiniti} represents our main result because it is a relevant
step to the identification of the type of stochastic Ising models in
dimension $d\geq 2$. We stress that the positivity of the
temperature is required only in this result.

Theorem \ref{unione} states some conditions to obtain models of type
$\mathcal{M}$, and it is used to prove some findings in Section
\ref{rhoFno0}.

Theorem \ref{hexag} shows that the model with ferromagnetic
interactions and $\gamma= 1/2$ on the  hexagonal lattice is not of
type $\mathcal{F}$ when $k \geq 2$. In so doing we complement
\cite{NNS00}, where it is proven that the same model with $k=1$ and
$T \equiv 0$ is of type $\mathcal{F}$.

In Section \ref{rhoFno0} we describe conditions leading to fixating
sites. Lemmas \ref{Lemrichiesto} and \ref{Lemnonrichiesto} highlight
the connection between lowering and increasing energy flips. Indeed,
under the hypothesis of Lemma \ref{serve}, the number of such flips
is a random variable having finite mean on any given set of flipping
spins. These two Lemmas are widely used to prove the results of this
Section.

Theorem \ref{culta}  is inspired by \cite{GNS,NNS00} and generalizes
their outcomes to the case of $k>1$. In particular, it links the
parameter $k$ with the properties of the graph to obtain that some
sites fixate, i.e. the model is of type $\mathcal{M}$ or
$\mathcal{F}$. Theorem \ref{cultb} formalizes the intuitive fact
that if there exists a set of sites strongly interconnected and
weakly connected with the complement of the set, then these sites
will fixate with positive probability. 

In Definition \ref{defeabsent} we adapt to our setting the concept
of \emph{e-absent} configurations (namely, $k$-\emph{absent on}
$\mathcal{J}$ here), which has been introduced for the graph
$\mathbb{L}_2$ in \cite{CDN02,GNS}. We generalize such a definition
by including $k
>1$ and the considered class of graphs. Theorems \ref{teoabsent} and
\ref{teorhoF} are based on $k$-absence. The former result states
that configurations which are $k$-absent on $\mathcal{J}$  can
appear only on a random finite time interval (a.s.) when
interactions are properly selected; the latter one provides a
condition on the graph and the parameter $k$ such that some sites
have a positive probability to fixate. An interesting consequence of
the definition of $k$-absence is that if a configuration is
$k$-absent on $\mathcal{J}$, then it is $k'$-absent on
$\mathcal{J}$, for each $k'>k$. Hence, a large value of $k$ seems to
facilitate the fixation of the sites (see  Theorem \ref{teoabsent}).
Differently, a large value of $k$ is an obstacle for the fixation of
the sites in Theorem \ref{teorhoF}. This contrast leads to a not
straightforward link between the value of $k$ and the identification
of the model. However, in our setting, we have shown that a model on
the hexagonal lattice is of type $\mathcal{F}$ for $k=1$ (see also
\cite{NNS00}) and it is not of type $\mathcal{F}$ for $k\geq 2$ (see
Theorem \ref{hexag}). This Theorem suggests a general conjecture
that link the value of $k$ with the type of the model (see the
Conclusions).

The provided examples illustrate a wide part of the outcomes, and
complement the theoretical findings of the paper. In particular, we
have introduced the graph $\Gamma_{\ell,m}(G)$, it is constructed by
replacing the original edges of a graph $G=(V,E) $ with more complex
structures (see Definition \ref{def:Gamma}). For this specific
class, we have provided some conditions for which the models over
such graphs are of type $\mathcal{M}$ (see Theorem \ref{thm:ultimo})
and $\mathcal{I}$ (see Theorem \ref{thm:ultimissimo}). The case
$\mathcal{F}$ is left as a conjecture in the Conclusions.

The last section of the paper concludes and offers some conjectures
and open problems. In order to assist the reader, we have provided
some figures presenting
 the main contributions of the
related literature at zero-temperature, the results obtained in the
present paper for the $\Gamma_{\ell,m}(G)$ graphs and for $k$-stable
$d$-$\mathcal{E}$ graphs (see Figure \ref{fig5}).

\section{Definitions and first properties of the model} \label{sec2}

The main target of this section is to define a dynamical stochastic
Ising model. In order to do it we introduce some notation.

\subsection{Graphs}



A graph $G=(V,E)$ with origin $O$ is said to be a \emph{$d$-graph}
if
\begin{itemize}
\item[1.]   $G$ is embedded in $\mathbb{R}^d$,
i.e. the vertices are points and the edges are line segments;
\item[2.]  $G$ is translation invariant with respect
to the vectors $e_i$ of the canonical basis of $\mathbb{R}^d$;
    \item[3.]  for all  $v\in V$, the degree of $v$, denoted by $d_v$, is such that $d_v <
    \infty$;
    \item[4.]  every finite region $ S \subset \mathbb{R}^d $ contains
    finitely many
     vertices of  $G$.
\end{itemize}

Thus we can construct a $d$-graph $G =(V,E)$ doing a
tessellation of $\mathbb{R}^d $ with the basic cell $Cell =[0, 1 )^d
$, i.e.  the space  $\mathbb{R}^d $ is seen as the union of disjoint
hypercubes $ [0, 1 )^d + z   $ with $z \in \mathbb{Z}^d $. 
In order to
specify the $d$-graph $G = (V,E)$ it is enough to give the vertices
$V_{Cell}$ inside $Cell $ with the edges
$$
E_{Cell} = \{\{x,y \}\in E: \{x,y\} \cap Cell \neq \emptyset \}.
$$

Set $ d_G = \sup  \{ d_v : v \in V \} $ \emph{the maximal degree} of
the $d$-graph $G =(V, E)$; by periodicity and the fact that $Cell$
contains finitely many  vertices, then $d_G$ is finite.

If a  $d$-graph has at least a vertex with even degree
we denote it as  $d$-$\mathcal{ E}$ \emph{graph}. As an example of a
$d$-$\mathcal{ E}$ graph we take the lattice $\mathbb{L}_d
=(\mathbb{Z}^d, \mathbb{E}_d)$, 
where the degree of any vertex is equal to $ 2d$.

\medskip

Some standard definitions on graph theory are now recalled. Given $n
\in \mathbb{N}$, a \emph{path  of length $n$} starting in $x \in V $
and ending in $y \in V $ is a sequence of vertices  $ (x_0 =x , x_1,
\ldots , x_{n -1}, x_n =y ) $ having $\{ x_{i-1}, x_{i}\} \in E$ for
each $i = 1, \ldots , n$. A set $A \subset V $ is \emph{connected}
if for any couple $x,y \in A $ there exists a path $ (x_0 =x , x_1,
\ldots , x_{n -1}, x_n =y ) $ with $ x_i \in A $, for $i = 0 ,
\ldots , n $. We say that  a graph $G =(V, E ) $ is connected if $V
$ is connected.

Since, without loss of generality,  one can study separately the
different connected components, as will be clear below, in the
sequel we will consider only connected
$d$-graphs. 


The distance $\nu_G (u,v )  $ in $G$ of two vertices $u, v \in V $
is the length of the shortest path (not necessarily unique) starting
in $u $ and ending in $v$. For $u \in V $ and $L \in \mathbb{N}$ we
define the ball centered in $u $ with radius $L $ as
\begin{equation*}\label{palla}
    B_L(u )= \{ v \in V :\nu_G (u,v ) \leq L \} .
\end{equation*}
The \emph{boundary} of a set $A \subset V $ is
\begin{equation*}\label{frontierainter}
    \partial A =\{ v \in A: \exists u \not \in A \text{ such that } \{u, v\} \in
E\},
\end{equation*}
whereas the \emph{external boundary} is
$$
\partial^{ext} A =\{ v \notin A: \exists u  \in A \text{ such that } \{u, v\} \in
E\}.
$$

\subsection{Hamiltonian}

Let us consider the space $ \{-1,+1\}^V$ equipped with the product
topology. For a given \emph{configuration} $\sigma \in \{-1,+1\}^V$
and for any subset $A$ of $V$, we write $\sigma^{(A)}
=(\sigma^{(A)}_v: v \in V )$ to denote the configuration
\begin{equation*} \label{soloinA}
\sigma^{(A)}_y = \left \{ \begin{array}{cc}
                          -\sigma_y, & \text{if } y \in A;  \\
                          \sigma_y, &  \text{if } y \not \in  A;
                        \end{array} \right .
\end{equation*}
that corresponds to \emph{flip} the configuration $\sigma $ on the
set $A $.

The \emph{formal Hamiltonian} associated to the \emph{interactions}
$\mathcal{J} \in \{-1,+1\}^E$ is
 \begin{equation} \label{hamiltonian}
\mathcal{H}_{\mathcal{J}}(\sigma)  = -\sum_{e=\{x , y\}\in E } J_{e} \sigma_x  \sigma_y.
\end{equation}
However, the definition \eqref{hamiltonian} is not well posed for
infinite graphs. Thus, we shall work with the \emph{increment of the
Hamiltonian at} $A$, namely for a finite set   $A \subset V$
\begin{equation} \label{DeltaHonA}
\Delta_A \mathcal{H}_{\mathcal{J}}(\sigma)  = - 2 \cdot \sum_{e =\{x
, y\}\in E : x\in \partial A, y \in \partial^{ext} A } J_{e }
\sigma^{(A)}_x \sigma^{(A)}_y = 2 \cdot\sum_{e =\{x , y\}\in E: x\in
\partial A, y \in \partial^{ext} A}
 J_{e }\sigma_x \sigma_y.
\end{equation}


Notice that the value of $\Delta_A
\mathcal{H}_{\mathcal{J}}(\sigma)$ can be only an even integer. In
particular, if $\Delta_A \mathcal{H}_{\mathcal{J}}(\sigma) \not =
0$, then $|\Delta_A \mathcal{H}_{\mathcal{J}}(\sigma)|\geq 2$.

\subsection{Dynamics}
 The dynamics of the system will be a non-homogeneous Markov process
depending on the interactions, the temperature profile and the
initial configuration.

The process is denoted by $\sigma(\cdot )= (\sigma_v (t): v \in V ,
t\in[0, \infty))$. It takes value in $\{-1,+1\}^{V} $ and  has left
continuous trajectories.

For $k \in \mathbb{N}$, we call $\mathcal{A}_k $ the collection of
the connected subsets $A \subset V$ having cardinality smaller or
equal to $k $. It is important to stress that, for a given $v \in
V$, the set $\{A \in \mathcal{A}_k : v \in A\}$ is finite for a
$d$-graph. Therefore, the set $\mathcal{A}_k $ is countable and,
following \cite{Liggett}, one  defines the infinitesimal generator
related to $k $  as follows
\begin{equation} \label{generatorA}
\mathcal{L}^{k, \mathcal {J} ,T }_t( f(\sigma) ) = \sum_{A \in
\mathcal{A}_k }
c^{\mathcal{J},T , (A)}_t(\sigma ) (f (\sigma^{(A)})  - f (\sigma)
),
\end{equation}
where  $f: \{ -1,+1\}^V \to \mathbb{R}$ is a continuous function,
$T$ is the temperature profile,  $\mathcal{J} \in \{ -1,+1\}^E$ and
$ \sigma \in \{ -1,+1\}^V$. Moreover, when $T(t)>0$, the rates are
\begin{equation} \label{cA}
c^{\mathcal{J},  T, (A)}_t(\sigma ) = \frac{ e^{  - \Delta_A
\mathcal{H}_{\mathcal{J}} (\sigma ) / T(t)} }{ e^{   \Delta_A
\mathcal{H}_{\mathcal{J}} (\sigma ) / T(t) } +e^{  - \Delta_A
\mathcal{H}_{\mathcal{J}} (\sigma )/ T(t) }  } = \frac{ 1 }{1+  e^{2
\Delta_A \mathcal{H}_{\mathcal{J}} (\sigma ) / T(t) } } .
\end{equation}
In the case in which $T(t ) =0 $ for $t \geq 0$, set
\begin{equation*} \label{cAzero}
c^{\mathcal{J}, T,(A)}_t(\sigma ) = \lim_{T \to 0^+}  \frac{ e^{  -
\Delta_A \mathcal{H}_{\mathcal{J}} (\sigma ) / T} }{ e^{   \Delta_A
\mathcal{H}_{\mathcal{J}} (\sigma ) / T } +e^{  - \Delta_A
\mathcal{H}_{\mathcal{J}} (\sigma )/ T } } .
\end{equation*}

Hence, when $T (t)= 0 $, one has
\begin{equation*}\label{zerotemp}
    c^{\mathcal{J},T,(A)}_t(\sigma ) =\left \{
    \begin{array}{cc}
      0 &\text{ if $\Delta_A \mathcal{H}_{\mathcal{J}} (\sigma )>0 $;}  \\
      \frac{1}{2} &\text{ if $\Delta_A \mathcal{H}_{\mathcal{J}} (\sigma )=0 $;}  \\
      1 &\text{ if $\Delta_A \mathcal{H}_{\mathcal{J}} (\sigma )<0 $.}  \\
    \end{array}
    \right .
\end{equation*}

\medskip

We remark that in the case of $T\equiv 0$ and $k =1$ our dynamics is
the Glauber dynamics at zero temperature (see for instance
\cite{CDN02, DN03, GNS, Morris,NNS00}).

The previous defined process can be constructed by using a
collection of independent Poisson processes $( \mathcal {P}_A : A
\in \mathcal{A}_k)$ with rate $ 1$, the so-called Harris' graphical
representation \cite{Harris}.

Call $\mathcal{T}_A =(\tau_{A, n } : n \in \mathbb{N}  )$  the
arrivals of the Poisson process $\mathcal{P}_A$.  The probability
that there is a \emph{flip at the set $A$} (conditioning on the
event $\{ t \in \mathcal{T}_A \} $), i.e. $ \sigma (t^+) =\lim_{s
\downarrow t} \sigma (s ) =\sigma^{(A)} (t) $, is equal to $
c^{\mathcal{J},T,(A)}_t(\sigma (t)) $, where we pose $ \sigma^{(A)}
(t) = (\sigma(t))^{(A)}  $. An useful representation of such events
can be given through the family of i.i.d. random variables
\begin{equation}
\label{UAn} (U_{A,n}: A \in \mathcal{A}_k, n \in \mathbb{N})
\end{equation}
which are uniform in $[0,1]$ and such that if $U_{A,n} <
c^{\mathcal{J} ,T,(A)}_{\tau_{A,n}} (\sigma (\tau_{A,n}) )$, then
there is a flip at the set $A$ at time $\tau_{A,n}$ (see
\cite{Harris, Liggett}).

The representation of the Markov process based on the Poisson
processes is very popular in the framework of zero-temperature
dynamics, since it exhibits remarkable advantages with respect to
the one based on the generator. Firstly, the spatial ergodicity of
the process with respect to the translations related to the
canonical
basis 
can be invoked on the ground of a very general theory (see
\cite{Harris,ergodic, NNS00, Sepp}). Secondarily, this
representation is the natural setting for the proofs of the results.

\medskip

We say that a flip at $A $ at time $ t \in  \mathcal{T}_A$ is in
\emph{favour of the Hamiltonian}  if $ \Delta_A
\mathcal{H}_{\mathcal{J}}  (\sigma (t) ) <0 $; it is
\emph{indifferent for the Hamiltonian}  if $ \Delta_A
\mathcal{H}_{\mathcal{J}} (\sigma (t) ) =0 $ and it is \emph{in
opposition of the Hamiltonian}  if $ \Delta_A
\mathcal{H}_{\mathcal{J}}  (\sigma (t)) >0 $.

\bigskip

For $k \in \mathbb{N}$, $t \in [0,\infty]$ and $A \in
\mathcal{A}_k$, we define the sets $\mathcal{S}^-_{A,t}$,
$\mathcal{S}^0_{A,t}$ and $\mathcal{S}^+_{A,t}$ as
\begin{equation}
\label{SAt-} \mathcal{S}^-_{A,t}=\left\{s \in [0,t] \cap
\mathcal{T}_A : \text{ there is a flip at $A$ at time $s$ with }
\Delta_A \mathcal{H}_{\mathcal{J}}(\sigma(s))> 0 \right\},
\end{equation}
\begin{equation}
\label{SAt0} \mathcal{S}^0_{A,t}=\left\{s \in [0,t] \cap
\mathcal{T}_A : \text{ there is a flip at $A$ at time $s$ with }
\Delta_A \mathcal{H}_{\mathcal{J}}(\sigma(s))= 0 \right\},
\end{equation}
\begin{equation}
\label{SAt+} \mathcal{S}^+_{A,t}=\left\{s \in [0,t] \cap
\mathcal{T}_A :\text{ there is a flip at $A$ at time $s$ with }
\Delta_A \mathcal{H}_{\mathcal{J}}(\sigma(s))< 0 \right\},
\end{equation}
and, for any $x \in V$,
\begin{equation}\label{N-}
\mathcal{N}^-_x =\bigcup_{A \in \mathcal{A}_k: A \ni x }
\mathcal{S}^-_{A,\infty} , \text{ }
 \mathcal{N}^0_x =\bigcup_{A \in
\mathcal{A}_k: A \ni x } \mathcal{S}^0_{A,\infty} , \text{ }
\mathcal{N}^+_x =\bigcup_{A \in \mathcal{A}_k: A \ni x }
\mathcal{S}^+_{A,\infty}.
\end{equation}
Moreover,
we define \emph{the set of total flips involving the site $x$} as
$$
\mathcal{N}_x =\mathcal{N}^-_x \cup  \mathcal{N}^0_x \cup
\mathcal{N}^+_x ,
$$
and \emph{the set of total arrivals  involving the site $x$} as
$$
\mathcal{Q}_x  =\bigcup_{A \in \mathcal{A}_k: A \ni x } \{ t \in \mathcal{T}_A \}.
$$

\medskip

We now provide the definition of the probability measure associated
to the dynamical model defined in \eqref{generatorA}, which can be
written as
\begin{equation*}
\label{P} P_{k, \mu_{\mathcal{J}} , \nu_{\sigma(0)}}
=\mu_{\mathcal{J}} \times \nu_{\sigma(0)} \times P_{k},
\end{equation*}
where $\mu_{\mathcal{J}}$ is the probability distribution of the
interactions $\mathcal{J}$ forming the quenched random environment;
$\nu_{\sigma(0)}$ is the probability distribution of the initial
configuration $\sigma(0)$; $P_{k}$ is the measure of the arrivals of
the i.i.d. Poisson processes on the sets $A \in \mathcal{A}_k$ and
the sequences of independent $U$'s as in \eqref{UAn} -- independent
also from the Poisson processes.

Some particular cases are important in our context. The considered
measures over the interactions are of two families
\begin{equation}
\label{mu=Bernoulli}\mu_{\mathcal{J}}=\prod_{e \in E} Ber_e(\alpha),
\qquad \alpha \in [0,1],
\end{equation}
where $Ber_e(\alpha)$ is the Bernoulli distribution with parameter
$\alpha$ and support $\{-1,+1\}$, labeled by the edge $e\in E$. The
deterministic case is
\begin{equation}
\label{mu=Jfissato}\mu_{\mathcal{J}}=\delta_{\mathcal{J}}, \qquad
\mathcal{J} \in \{-1,+1\}^E.
\end{equation}
Measure $\mu_{\mathcal{J}}$ in \eqref{mu=Bernoulli} is the case of
$\pm J$ model in \cite{GNS}, while \eqref{mu=Jfissato} is the case
of deterministically selecting the interactions on the edges of the
graph.

Analogously, for the initial configuration $\sigma(0)$, we use
\begin{equation}
\label{nu=Bernoulli}\nu_{\sigma(0)}=\prod_{v \in V} Ber_v(\gamma),
\qquad \gamma \in [0,1];
\end{equation}
\begin{equation}
\label{nu=fissato}\nu_{\sigma(0)}=\delta_{\sigma}, \qquad \sigma \in
\{-1,+1\}^V.
\end{equation}

To avoid a cumbersome notation, we will denote the probability
measure $P_{k, \mu_{\mathcal{J}} , \nu_{\sigma(0)}}$ simply as
$\mathbb{P}$, and the identification of it will be clear from the
context. Analogously, the expected value associated to the
probability measure $P_{k, \mu_{\mathcal{J}} , \nu_{\sigma(0)}}$
will be indicated as $\mathbb{E}$.

Let us consider a
$d$-graph $G=(V,E)$. Then,
\begin{itemize}
\item when $\mu_{\mathcal{J}}$ and
$\nu_{\sigma(0)}$ are as in \eqref{mu=Bernoulli} and
\eqref{nu=Bernoulli}, respectively, then the process will be denoted
as $(k, \alpha , \gamma; T)$-model on $G$, for a temperature profile
$T$;
\item when $\mu_{\mathcal{J}}$ and
$\nu_{\sigma(0)}$ are as in \eqref{mu=Jfissato} and
\eqref{nu=fissato}, respectively, then the process will be denoted
as $(k, \langle \mathcal{J} \rangle, \langle\sigma\rangle; T)$-model
 on $G$, for a temperature profile $T$.
\end{itemize}
\begin{rem}
To give the interactions $\mathcal{J}$ and the initial configuration
$\sigma(0)$ (hence, leading to the indices
$\langle\mathcal{J}\rangle$ and $\langle\sigma\rangle$,
respectively) has an important role in the proof of some results, as
we will see below. In fact, in the considered framework, when a
property is shown for all $\mathcal{J}$ and $\sigma(0)$, then such a
property holds true in the Bernoullian case as well.
\end{rem}

\medskip
The definition of the measure associated to the model allows us to
introduce the quantities of interest in assessing its type.

By adopting the notation of \cite{GNS}, a model is said to be of
type $\mathcal{I}$, $\mathcal{F}$, and $\mathcal{M}$, if all the
sites flip infinitely many times (a.s.), finitely many times (a.s.),
or some sites flip infinitely many times and the others do it
finitely many times (a.s.), respectively.

In particular, using standard ergodic arguments one can see that for
a $(k, \alpha , \gamma; T)$- model on a $d$-graph $G$ the quantity
\begin{equation*}\label{rhoI}
\rho_{\mathcal{I}} = \rho_{\mathcal{I}} (k, \alpha , \gamma;T )=
\lim_{\ell \to \infty } \frac{| \{x \in  B_\ell (v) : |\mathcal{N}_x| =
\infty  \} |}{ | B_\ell (v) | }
\end{equation*}
does exist and it is constant almost surely, and it does not depend
on the vertex $v$
(for details 
see \cite{GNS,NNS00}).

We also define
$$
\rho_{\mathcal{F}} =\rho_{\mathcal{F}} (k, \alpha , \gamma ; T)=
 \lim_{\ell \to \infty } \frac{| \{x \in  B_\ell(v) :  |\mathcal{N}_x| <
\infty  \} |}{ | B_\ell (v) | }.
$$
Therefore $\rho_{\mathcal{I}}+\rho_{\mathcal{F}} =1 $, and
\begin{itemize}
\item if $\rho_{\mathcal{I}}=1$ ($\rho_{\mathcal{F}}=1$, resp.) then the
$(k, \alpha , \gamma ; T)$-model on the $d$-graph $G$ is of type
$\mathcal{I}$ (${\mathcal{F}}$, resp.);
\item if $0<\rho_{\mathcal{I}}<1$ then the
$(k, \alpha , \gamma ; T)$-model on the $d$-graph $G$
 is of type $\mathcal{M}$.
\end{itemize}

\section{Conditions for $\rho_{\mathcal{I}} >0$}

The first results are given in a general setting, where the
interactions $\mathcal{J} $ and the initial configuration $\sigma
(0)$ are provided; moreover, they could be presented  for a
completely general Glauber dynamics associated to a Gibbs measure
and, thus, associated to a Hamiltonian.

First, we give the following uniform integrability condition
\begin{definition}    \label{def:T} Let us consider a $d$-graph $G=(V,E)$.
%
%
The temperature profile $T $ is said to be \emph{fast decreasing to
zero} if, for a $ (k, \langle \mathcal{J} \rangle,
\langle\sigma\rangle; T)$-model on
 $G$, one has
$$
\lim_{t \to \infty} \sup_{\hat \sigma \in \{-1,+1\}^V} \mathbb{E}  (
|\mathcal{N}_x^- \cap [t, \infty )| \,\,|\,\, \sigma (t) = \hat
\sigma ) =0,
$$
for any $x \in V$, $k \in \mathbb{N}$, $\mathcal{J} \in
\{-1,+1\}^E$. 

\end{definition}

Notice that the condition of being fast decreasing to zero is an
asymptotic property of the temperature profile, and the complete
knowledge of the behavior of $T$ is not required.

An immediate consequence of the previous definition is the following
\begin{lemma} \label{serve}
Let us take a $ (k, \langle \mathcal{J} \rangle,
\langle\sigma\rangle; T)$-model on a $d$-graph $G= (V, E)$ with $T $
fast decreasing to zero. Then
\begin{equation} \label{val1}
\mathbb{E} ( |\mathcal{N}_x^- | ) <\infty , \text{ for any } x \in
V.
\end{equation}
Furthermore, for any finite set $V_0\subset V  $,
\begin{equation} \label{eq:thm1new}
\lim\limits_{t \to \infty}\inf_{\hat \sigma \in \{-1,+1\}^V}
\mathbb{P} (  \bigcap_{x \in V_0}\{ \mathcal{N}^-_x \cap [t , \infty
)=\emptyset   \}      \,\, | \,\, \sigma (t) = \hat \sigma )=1,
\end{equation}
and
\begin{equation}\label{fa}
 \mathbb{P}  \left
  ( \bigcap_{x \in V_0}\{  \mathcal{N}^-_x=\emptyset\}
 \right )  >0 .
\end{equation}
\end{lemma}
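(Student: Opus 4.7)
I will establish the three assertions in the order they are stated, each building on the previous.

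For \eqref{val1}, I split $\mathcal{N}_x^-$ at a suitable cutoff. The fast decrease to zero hypothesis applied at $x$ lets me pick $t^*$ with $\sup_{\hat \sigma} \mathbb{E}(|\mathcal{N}_x^- \cap [t^*, \infty)| \mid \sigma(t^*) = \hat \sigma) \leq 1$, so the tower property gives $\mathbb{E}|\mathcal{N}_x^- \cap [t^*,\infty)| \leq 1$. For the initial segment I use $|\mathcal{N}_x^- \cap [0,t^*]| \leq |\mathcal{Q}_x \cap [0,t^*]|$, and $\mathcal{Q}_x$ is the superposition of the rate-$1$ Poisson processes $\mathcal{P}_A$ over the \emph{finite} collection $\{A \in \mathcal{A}_k : A \ni x\}$ (finite because $G$ is a $d$-graph, as noted before \eqref{generatorA}); its expected count on $[0,t^*]$ is bounded by $|\{A \in \mathcal{A}_k : A \ni x\}|\cdot t^* < \infty$. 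Now \eqref{eq:thm1new} follows from Markov's inequality and a union bound: for any $\hat \sigma$ and $x \in V_0$, Markov gives $\mathbb{P}(\mathcal{N}_x^- \cap [t,\infty) \neq \emptyset \mid \sigma(t) = \hat \sigma) \leq \mathbb{E}(|\mathcal{N}_x^- \cap [t,\infty)| \mid \sigma(t) = \hat \sigma)$, and summing the complementary events over the finite set $V_0$ and taking $\sup_{\hat \sigma}$ produces an upper bound on the probability of $\bigcup_{x \in V_0}\{\mathcal{N}_x^- \cap [t,\infty) \neq \emptyset\}$ that tends to $0$ by Definition \ref{def:T} applied separately at each $x \in V_0$.

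The main step is \eqref{fa}, where I need an unconditional positive lower bound. The plan is to cut the time axis at a large $t$ and invoke the Markov property coming from the Harris graphical construction. Setting
\begin{equation*}
B_1 = \bigcap_{x \in V_0}\{\mathcal{N}_x^- \cap [0,t] = \emptyset\}, \qquad B_2 = \bigcap_{x \in V_0}\{\mathcal{N}_x^- \cap [t,\infty) = \emptyset\},
\end{equation*}
we have $\bigcap_{x \in V_0}\{\mathcal{N}_x^- = \emptyset\} = B_1 \cap B_2$. Conditioning on $\sigma(t)$, and using that $B_2$ is determined by $\sigma(t)$ together with Poisson increments and uniforms $U_{A,n}$ with $\tau_{A,n} > t$ (both independent of the history up to time $t$), I obtain $\mathbb{P}(B_1 \cap B_2) \geq \mathbb{P}(B_1) \cdot \inf_{\hat \sigma}\mathbb{P}(B_2 \mid \sigma(t) = \hat \sigma)$. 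By \eqref{eq:thm1new} I can choose $t$ so that the infimum is $\geq 1/2$. To bound $\mathbb{P}(B_1)$ from below, let $\mathcal{A}_0 = \bigcup_{x \in V_0}\{A \in \mathcal{A}_k : A \ni x\}$, which is finite; the event that none of the Poisson processes $\mathcal{P}_A$ with $A \in \mathcal{A}_0$ has any arrival in $[0,t]$ forces $B_1$, and has probability $e^{-|\mathcal{A}_0|t}>0$ by independence. Combining, $\mathbb{P}(\bigcap_{x \in V_0}\{\mathcal{N}_x^- = \emptyset\}) \geq \tfrac{1}{2}\, e^{-|\mathcal{A}_0|t}>0$.

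The only real subtlety is justifying the Markov splitting in the last step; it is legitimate because, in the graphical construction, $B_2$ depends measurably only on $\sigma(t)$ and on Poisson increments and uniforms strictly after $t$, which are independent of the $\sigma$-algebra generated by the dynamics up to time $t$. The rest is a routine combination of Markov's inequality with the defining uniform integrability condition of Definition \ref{def:T}.
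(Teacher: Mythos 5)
Your proposal is correct and follows essentially the same route as the paper's own proof: the splitting of $\mathcal{N}_x^-$ at a cutoff time combined with the Poisson bound on $\mathcal{Q}_x$ for \eqref{val1}, Markov's inequality plus a union bound over the finite set $V_0$ for \eqref{eq:thm1new}, and the Markov-property factorization with the event of no arrivals on $[0,t]$ (probability $e^{-|\mathcal{A}_0|t}$) for \eqref{fa}. The only differences are cosmetic (the paper writes the no-arrival bound as $e^{-K|V_0|t_0}$ with $K=\max_x K_x$, which dominates your $|\mathcal{A}_0|$).
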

\begin{proof} \label{coro3}
Let us define
\begin{equation} \label{kappa}
K_x =  |\{ A \in \mathcal{A}_k:  A \ni x\} |, \text{ } K = \max_{x
\in V} K_x   .
\end{equation}

Clearly, for any $x \in V$,  $K_x $ is finite and also $K $ is
finite because it corresponds to take the maximum only inside
$Cell$.

Since $T $ is fast decreasing to zero, we can take $ t_0 $
sufficiently large to have
$$
\sup_{\hat \sigma \in \{-1,+1\}^V} \mathbb{E}( |\mathcal{N}_x^- \cap
[t_0, \infty )| \,\,|\,\, \sigma (t_0) = \hat \sigma ) \leq 1 .
$$

One has
$$
\mathbb{E}    ( |\mathcal{N}_x^- | ) =\mathbb{E}   (
|\mathcal{N}_x^- \cap [ 0,t_0) | ) + \mathbb{E}  ( |\mathcal{N}_x^-
\cap [t_0, \infty ) | )
$$
$$
\leq  \mathbb{E}    ( |\mathcal{Q}_x \cap [ 0,t_0) | ) + \sup_{\hat
\sigma \in \{-1,+1\}^V}\mathbb{E}( |\mathcal{N}_x^- \cap [t_0,
\infty )| \,\,|\,\, \sigma (t_0) = \hat \sigma ) \leq K t_0 + 1,
$$
and \eqref{val1} is proved.

 To prove \eqref{eq:thm1new} we notice
that, by continuity of the measure and being $T$ fast decreasing to
zero, for each $\varepsilon >0$ there exists $t_0
>0 $ such that for all $t \geq t_0$ and $x \in V_0$
$$
\inf_{\hat \sigma \in \{-1,+1\}^V}     \mathbb{P} ( \mathcal{N}^-_x
\cap [t , \infty )=\emptyset \,\, | \,\, \sigma (t) = \hat \sigma )
> 1-\varepsilon,
$$
and thus
\begin{equation*}
\label{eq:nuova} \pi_0 =\inf_{\hat \sigma \in \{-1,+1\}^V}
\mathbb{P} \left
  ( \bigcap_{x \in V_0} \{ \mathcal{N}^-_x \cap
  [t , \infty )  =\emptyset \}  \,\, \Big| \,\, \sigma (t) = \hat \sigma
 \right )  \geq 1-\varepsilon |V_0| .
\end{equation*}
This leads to \eqref{eq:thm1new}.


Finally, by the Markov property
$$
  \mathbb{P}  \left
  ( \bigcap_{x \in V_0} \{ \mathcal{N}^-_x   =\emptyset\}
 \right )  \geq   \mathbb{P}  \left
   ( \bigcap_{x \in V_0} \{ \mathcal{N}^-_x \cap    [0, t_0  ) =\emptyset\}   \right )  \pi_0 .
$$
But
$$
\mathbb{P}  \left
   ( \bigcap_{x \in V_0} \{ \mathcal{N}^-_x \cap    [0, t_0  ) =\emptyset\}   \right )
   \geq
    \mathbb{P} \left
      ( \bigcap_{x \in V_0} \{  \mathcal{Q}_x \cap    [0, t_0  )  =\emptyset\}   \right )
      \geq e^{- K |V_0 | t_0 }  >0 .
$$
For $\varepsilon < \frac{1}{|V_0|}$, one obtains  \eqref{fa}.
\end{proof}

\begin{theorem} \label{opposition}
Let us consider a $ (k, \langle \mathcal{J} \rangle,
\langle\sigma\rangle; T)$-model on a $d$-graph $G= (V, E)$.
\begin{itemize}
  \item  If
\begin{equation}
\label{cond-item1}
 \limsup_{t \to \infty }  T(t)  \ln t < 4 ,
\end{equation}
 then  the temperature profile $T $ is  fast decreasing to zero.
  \item If, for a given $x \in V$,
\begin{equation}
\label{cond-item2} \liminf_{t \to \infty }  T(t)  \ln t >4d_x,
\end{equation}
then $\mathcal{N}_x$ is infinite almost surely.
\end{itemize}
\end{theorem}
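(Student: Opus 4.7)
The plan is to exploit the Harris graphical representation together with uniform-in-$\sigma$ two-sided estimates on the Glauber rate \eqref{cA}: the first item rests on an upper bound for opposition-flip rates, and the second on a lower bound for the single-site rate at $x$.

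For item $(i)$, note that $\Delta_A\mathcal{H}_{\mathcal{J}}(\sigma)$ is an even integer, so whenever it is positive it is at least $2$. Therefore \eqref{cA} yields
\[
c_s^{\mathcal{J},T,(A)}(\sigma)\le e^{-4/T(s)} \qquad\text{uniformly in }\sigma
\]
for every $A\in\mathcal{A}_k$ with $A\ni x$. By the compensator formula for the Harris construction, the conditional expected number of opposition flips at a given such $A$ in $[t,\infty)$ is bounded above by $\int_t^\infty e^{-4/T(s)}\,ds$, regardless of the conditioning configuration $\hat\sigma$. Summing over the $K_x$ sets $A$ containing $x$ (with $K_x$ as in \eqref{kappa}) yields
\[
\sup_{\hat\sigma\in\{-1,+1\}^V}\mathbb{E}\bigl(|\mathcal{N}_x^-\cap[t,\infty)|\,\big|\,\sigma(t)=\hat\sigma\bigr)\le K_x\int_t^\infty e^{-4/T(s)}\,ds.
\]
Assumption \eqref{cond-item1} supplies $\varepsilon>0$ and $t_0$ such that $4/T(s)>(1+\varepsilon)\ln s$ for $s\ge t_0$, so $e^{-4/T(s)}\le s^{-(1+\varepsilon)}$ is integrable at infinity and the supremum above tends to $0$, which is precisely the content of Definition \ref{def:T}.

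For item $(ii)$, since $|\Delta_{\{x\}}\mathcal{H}_{\mathcal{J}}(\sigma)|\le 2d_x$ for every $\sigma$, the single-site rate satisfies
\[
c_s^{\mathcal{J},T,(\{x\})}(\sigma)\ge \frac{1}{1+e^{4d_x/T(s)}}\ge \tfrac{1}{2}\,e^{-4d_x/T(s)}
\]
uniformly in $\sigma$. Hypothesis \eqref{cond-item2} gives $\eta>0$ such that $e^{-4d_x/T(s)}\ge s^{-(1-\eta)}$ for all $s$ sufficiently large. Let $B_n$ denote the event that at least one flip at $\{x\}$ occurs in $[n,n+1]$, and let $\tau$ be the first arrival of $\mathcal{P}_{\{x\}}$ after time $n$. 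Conditioning on the history of the Harris construction up to time $n$ and lower-bounding $\mathbb{P}(B_n\mid\cdot)$ by the probability that $\tau\in[n,n+1]$ and that the flip at $\tau$ is accepted, one obtains
\[
\mathbb{P}(B_n\mid \text{history up to time }n)\ge (1-e^{-1})\cdot\tfrac{1}{2}(n+1)^{-(1-\eta)}
\]
for all $n$ large. Since $\sum_n (n+1)^{-(1-\eta)}=\infty$, the conditional (L\'evy) second Borel--Cantelli lemma forces $B_n$ to occur infinitely often almost surely, and hence $|\mathcal{N}_x|=\infty$ almost surely.

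The main point of care lies in item $(ii)$: because the acceptance probability at each Poisson arrival depends on the configuration at that instant, the events $B_n$ are not independent. The uniform-in-$\sigma$ lower bound on $c_s^{\mathcal{J},T,(\{x\})}$ is precisely what turns this dependence into a deterministic lower bound on the conditional probability of $B_n$ and allows the conditional Borel--Cantelli step to close. The same uniform-in-$\sigma$ estimate is what makes the upper bound in item $(i)$ independent of $\hat\sigma$, as demanded by the supremum in Definition \ref{def:T}.
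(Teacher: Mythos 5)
Your proof is correct and follows essentially the same route as the paper's: in both items the key step is the uniform-in-$\sigma$ rate bound coming from $|\Delta_A\mathcal{H}_{\mathcal{J}}|\ge 2$ (resp. $\le 2d_x$), followed by a summable bound on expected opposition flips for item one and the L\'evy conditional Borel--Cantelli lemma for item two. The only differences are cosmetic (a continuous-time Campbell-type integral instead of the paper's unit-interval discretization with Bernoulli thinning, and the lower bound $\tfrac12 e^{-4d_x/T(s)}$ in place of $(1+n)^{-1}$), so no further comparison is needed.
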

\begin{proof} \label{primaprova}
We start by proving the first item of the theorem.

By \eqref{cond-item1} one has
\begin{equation}\label{fraz}
    r = \frac{1}{4} \cdot \limsup_{t \to \infty } T(t)  \ln t < 1.
\end{equation}

We also consider a constant $ a   \in (1,\frac{1}{r } )  $. Given
$\mathcal{N}^-_{x}$ (resp. $\mathcal{N}_{x}$), for $x \in V$ and $n
\in \mathbb{N}$, we define $\mathcal{N}^-_{x,n } = \mathcal{N}^-_{x}
\cap [ n-1, n  )$ (resp. $\mathcal{N}_{x,n } = \mathcal{N}_{x} \cap
[ n-1, n  )$).

Thus, there exists $\bar n \in \mathbb{N}$ large enough such that,
for any $n\geq \bar n $,  $t \in [n-1,n )$, $\mathcal{J} \in
\{-1,+1\}^E$, $\sigma \in \{-1,+1\}^V$ one has
\begin{equation}\label{disu-c}
    \frac{1}{(n-1)^{a}} \geq \sup \left \{
c^{\mathcal{J},T,(A)}_t(\sigma ):c^{\mathcal{J},T,(A)}_t(\sigma ) <
\frac{1}{2}  \right \}, \text{ for } A \ni x  .
\end{equation}

In fact, for any $t \in [n-1,n )$ using \eqref{cA} and the fact that
 $\frac{1}{1+e^x}$ decreases in $x$, one has
\begin{equation}
\label{cAtheo1} c^{\mathcal{J},T,(A)}_t(\sigma )<\frac{1}{2}
\,\,\,\Rightarrow \,\,\,c^{\mathcal{J},T,(A)}_t(\sigma
)<\frac{1}{1+e^{4/T(t)}}.
\end{equation}
By \eqref{fraz} and for a given $r ' \in ( r , 1)$, there exists
$t^\prime$ such that for any $t> t^\prime$ then $ 4/T(t)>\ln t / r
'$. Now, it is sufficient to take $ \bar{n} = \lceil t^\prime \rceil
+1$.

The second inequality in \eqref{cAtheo1} gives that, for any $n\geq
\bar n $ and for any $t \in [n-1,n )$
$$
c^{\mathcal{J},T,(A)}_t(\sigma )<\frac{1}{1+e^{\ln t / r '}}
=\frac{1}{1+ t^{1/r '}} \leq \frac{1}{(n-1)^{1/r '}}.
$$
By setting $r' = 1/a $, we obtain \eqref{disu-c}.  We set $ p_{n} =
\frac{1}{(n-1)^{a}}$, for $n \geq \bar n $.

 Then, for $t \geq \bar n $,
$$
\sup_{\hat \sigma \in \{-1, +1 \}^V}  \mathbb{E}       (
|\mathcal{N}^-_{x }
 \cap [t, \infty )] |    \,\, | \,\, \sigma ( t ) = \hat \sigma         ) \leq
 \sup_{\hat \sigma \in \{-1, +1 \}^V}
  \sum_{n =\lfloor t \rfloor }^\infty \mathbb{E}( |\mathcal{N}^-_{x,n } |)
  $$
  \begin{equation} \label{media}
  \leq
   \sum_{n =\lfloor t \rfloor }^\infty \mathbb{E}( \sum_{\ell  =1 }^{ |\mathcal{Q}_{x,n } | }
        Y_{x,n,\ell}
   ),
\end{equation}
where $\mathcal{Q}_{x,n } = \mathcal{Q}_{x} \cap [ n-1, n )$ and
$(Y_{x,n,\ell }: \ell \in \mathbb{N}) $ are i.i.d. Bernoulli random
variables with parameter $ p_{n} $ independent from
$\mathcal{Q}_{x,n }  $. Notice that we are implicitly considering a
common probability space for all the random variables. The last
inequality in \eqref{media} comes from the graphical representation.

Using  the definition of $K$ given in \eqref{kappa},
the proof of the first item of this theorem ends by noticing that the last term in
 \eqref{media} is smaller or equal than
 \begin{equation*}\label{disu}
 \sum_{n =\lfloor t \rfloor }^\infty \mathbb{E}( |\mathcal{Q}_{x,n } |  ) \mathbb{E}( Y_{x,n,1}    )
 \leq
 K \sum_{n =\lfloor t \rfloor  }^\infty  \mathbb{E}(      Y_{x,n,1}    )=
K \sum_{n =\lfloor t \rfloor }^\infty  \frac{1}{(n-1)^{a}} ,
 \end{equation*}
the last term tends to zero for $t \to \infty$.

 \smallskip

 We give now the proof of the second item of the theorem.

Condition \eqref{cond-item2} gives 
\begin{equation}\label{fraz-2}
    \eta = \frac{1}{4d_x} \cdot \liminf_{t \to \infty } T(t)  \ln t
    >1.
\end{equation}


For $x \in V$ and $n \in \mathbb{N}$, we define the event
 $$
 F_{x,n} = \{\exists \,\ell \in \mathbb{N} : \tau_{\{x\},\ell }  \in [n-1,n) \} ,
 $$
 i.e.  there is
 at least an arrive  for the Poisson $\mathcal{P}_{\{x\}}$ in the interval $ [ n-1, n )$.
 We consider the   collection of independent and equiprobable events
   $(F_{x,n} : x \in V, \,\,\, n \in \mathbb{N}) $.
In particular, $ \mathbb{P}(F_{x,1})=1-e^{-1}$.

There exists $\bar n \in \mathbb{N}$ large enough such that, for any
$n\geq \bar n $, it results
\begin{equation*}\label{disu-c2}
    \frac{1}{n} \leq \inf \left \{
c^{\mathcal{J},T,(A)}_t(\sigma ) : t \in [n-1,n ),\, \mathcal{J} \in
\{-1,+1\}^E,\, \sigma \in \{-1,+1\}^V\right \} \qquad A \ni x  .
\end{equation*}

In fact, by \eqref{cA}, one has 
\begin{equation*}
\label{cAtheo1-2}c^{\mathcal{J},T,(A)}_t(\sigma )\geq
\frac{1}{1+e^{4d_x/T(t)}}.
\end{equation*}

Therefore, by \eqref{fraz-2}, there exists $t^\prime$ such that for
$t> t^\prime$ one has $ 4d_x/T(t)<\ln t$. Analogously to the
previous item, define $ \bar{n} = \lceil t^\prime \rceil +1$.

Hence, for any $n\geq \bar n $ and for any $t \in [n-1,n )$
$$
c^{\mathcal{J},T,(A)}_t(\sigma )>\frac{1}{1+e^{\ln t }} =\frac{1}{1+
t} \geq \frac{1}{1+n}.
$$
Then by L\'{e}vy's conditional form of the Borel-Cantelli lemma (see
\cite{Williams}), one obtains the proof of the statement.

In more details, let us consider $n \in \mathbb{N}$ and define
$\mathcal{G}_n$ as the $\sigma$-algebra generated by  all the
Poisson processes until time $n$ and the related $U$'s. Hence, the
process $\sigma (\cdot ) $ is seen as a function of the Harris'
graphical representation. Let us define the event
$$
E_n = F_{x,n} \cap \{ U_{\{ x\}, \bar \ell (n)} <
c^{\mathcal{J},T,(\{ x\})}_{ \tau_{\{ x \} ,  \bar \ell (n) }
}(\sigma (  \tau_{\{ x \} ,  \bar \ell (n) }  ) )   \} \supset
F_{x,n} \cap \{ U_{\{ x\}, \bar \ell (n)} < \frac{1}{1+n}   \} ,
$$
where $   \bar \ell (n)= \inf \{\ell : \tau_{\{ x \} , \ell } \in
[n-1, n)\} $.

Therefore, $E_n \in \mathcal{G}_n$ and $\mathbb{P} (E_n | \mathcal{G}_{n-1}) \geq  \frac{1 -
e^{-1}}{1+n} $ and this implies that
\begin{equation}\label{qpqp}
    \sum_{n=1}^\infty \mathbf{1}_{E_n} = \infty \qquad a.s.
\end{equation}
Formula \eqref{qpqp} guarantees that $S^-_{\{x\}, \infty}\cup
S^0_{\{x\}, \infty} \cup S^+_{\{x\}, \infty}$ is unbounded a.s..
Hence, $| \mathcal{N}_x| = \infty$ a.s..
\end{proof}

\begin{rem}
Consider the cubic $d$-graph $\mathbb{L}_d=(\mathbb{Z}^d,
\mathbb{E}_d)$. In this case $d_x=2d$, for any $x\in \mathbb{Z}^d$.
Moreover, given a connected finite subset $A \subset \mathbb{Z}^d$,
then $|\{e =\{x , y\}\in \mathbb{E}_d: x\in
\partial A, y \in \partial^{ext} A \}|$ is even. In this case, condition
\eqref{cond-item1} can be replaced with a less restrictive one as
follows
\begin{equation}
\label{14bis} \limsup_{t \to \infty }  T(t)  \ln t < 8.
\end{equation}
In fact, for each $\mathcal{J} \in \{-1,+1\}^E$ and $\sigma \in
\{-1,+1\}^V$, the flips in opposition of the Hamiltonian have $
\Delta_A \mathcal{H}_{\mathcal{J}}  (\sigma ) \geq 4$. This
statement can be generalized to the case of a site $x$ with even
degree $d_x$.

In particular, $d=1$ is associated to $d_x=2$, for each $x \in
\mathbb{Z}$, and also $| \{e =\{x , y\}\in \mathbb{E}_1: x\in
\partial A, y \in
\partial^{ext} A \}|=2$,  where $A \subset \mathbb{Z}$ is connected.
If there exists $ \lim_{t \to \infty }  T(t)  \ln t \not=8$, then
one among hypotheses \eqref{14bis} and \eqref{cond-item2} is
verified.
\end{rem}

As an immediate consequence of Theorem \ref{opposition} we obtain
\begin{corollary}\label{coro}
Let us consider a $d$-graph $G=(V, E)$ and a temperature profile $T$
such that $\liminf_{t \to \infty} T(t) \ln t > 4d_G $. Then the $
(k, \langle \mathcal{J} \rangle, \langle\sigma\rangle; T)$-model on
$G$ is of type $\mathcal{I}$, for any $k \in \mathbb{N}$,
$\mathcal{J} \in \{-1,+1\}^E$ and $\sigma \in \{-1,+1\}^V $.
\end{corollary}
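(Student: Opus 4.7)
The plan is to derive this immediately from the second item of Theorem \ref{opposition}, by noting that the hypothesis $\liminf_{t \to \infty} T(t) \ln t > 4d_G$ is uniform in the site, and that the vertex set $V$ is countable.

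First, I would observe that since $d_G = \sup\{d_v : v \in V\}$, we have $4d_x \leq 4d_G$ for every $x \in V$, and hence
\[
\liminf_{t \to \infty} T(t) \ln t > 4d_G \geq 4d_x \quad \text{for every } x \in V.
\]
In particular, hypothesis \eqref{cond-item2} of Theorem \ref{opposition} is satisfied simultaneously at every vertex $x$. Applying the second item of that theorem yields $\mathbb{P}(|\mathcal{N}_x|=\infty)=1$ for each fixed $x \in V$.

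Next I would use the countability of $V$, which follows from properties 2 and 4 in the definition of $d$-graph: each translate of $Cell$ contains only finitely many vertices and these translates are indexed by $\mathbb{Z}^d$, so $V$ is countable. Therefore
\[
\mathbb{P}\Bigl(\bigcap_{x \in V}\{|\mathcal{N}_x|=\infty\}\Bigr) = 1,
\]
i.e., almost surely every site flips infinitely many times. By the classification recalled at the end of Section \ref{sec2}, this means $\rho_{\mathcal{I}}=1$ a.s., so the model is of type $\mathcal{I}$. The statement holds for every $k \in \mathbb{N}$, $\mathcal{J} \in \{-1,+1\}^E$, and $\sigma \in \{-1,+1\}^V$ because Theorem \ref{opposition} itself is stated in the general $\langle\mathcal{J}\rangle,\langle\sigma\rangle$ setting and its hypothesis does not depend on $k$, $\mathcal{J}$, or the initial configuration.

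There is no real obstacle here: the only points to check are the uniform bound $d_x \leq d_G$ and the countability of $V$, both of which are immediate from the definitions. The argument is essentially a one-line reduction to the second item of Theorem \ref{opposition}.
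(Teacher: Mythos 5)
Your argument is exactly the one the paper intends: the corollary is presented as an immediate consequence of the second item of Theorem \ref{opposition}, obtained by observing $4d_x \le 4d_G$ for every $x$ and intersecting the countably many probability-one events $\{|\mathcal{N}_x|=\infty\}$. The proposal is correct and matches the paper's (implicit) proof.
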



To the benefit of the reader, we now adapt Lemma 5 in \cite{GNS} and
related definitions to our context of nonhomogeneous Markov process.
\begin{definition}
\label{def:GNS} Let us consider a continuous-time Markov process
$(Z_s : s \geq 0 )$ 
with state space
$\mathcal{Z}$. Moreover, let us take  a measurable set $A \subset
\mathcal{Z}$. The set $A$
\emph{recurs with probability} $p\in [0,1]$, if
$$
P(\{s >0 : Z_s \in A\}\,\, \text{\rm unbounded})=p.
$$
We also say that a measurable set $B \subset\mathcal{Z}^{[0,1]}$
 \emph{recurs with probability} $p\in [0,1]$, if
$$
P(\{t >0 : ( Z_{t+s} : s \in [0,1]) \in B\}\,\, \text{\rm
unbounded})=p.
$$
\end{definition}

\begin{lemma} \label{nuovoabs}
Let us consider the process $Z=(Z_t : t \geq 0 )$  and the events
$A$ and $B$ as in Definition \ref{def:GNS}. If $A$ recurs with
probability $p \in (0,1]$ and
\begin{equation}\label{estinf}
\inf\limits_{z \in A} \inf\limits_{t \geq 0} P(( Z_{t+s} : s \in
[0,1]) \in B|Z_t=z) \geq \varsigma > 0,
\end{equation}
then  $B$ recurs with probability $p' \geq p$.
\end{lemma}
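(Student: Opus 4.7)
The plan is to construct a sequence of well-separated ``attempt times'' at which the process lies in $A$, observe that at each attempt the probability of seeing $B$ in the following unit interval is at least $\varsigma$, and conclude via L\'evy's conditional Borel--Cantelli lemma that $B$ recurs whenever $A$ does.

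First I would define recursively $\tau_0 = 0$ and
\[
\tau_{i+1} \;=\; \inf\{\, t \geq \tau_i + 1 : Z_t \in A \,\}, \qquad i \geq 0,
\]
with the convention $\inf \emptyset = +\infty$. Call $R_A = \{\{s > 0 : Z_s \in A\} \text{ is unbounded}\}$, which by hypothesis has probability $p$. On $R_A$, for every $T$ there exists $s > T$ with $Z_s \in A$, so iteratively all $\tau_i$ are finite and $\tau_i \to \infty$; the enforced spacing $\tau_{i+1} - \tau_i \geq 1$ makes the observation windows $[\tau_i, \tau_i + 1]$ pairwise disjoint, so different windows contribute distinct recurrence points for $B$.

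Set $E_i = \{(Z_{\tau_i + s}: s \in [0,1]) \in B\}$. Applying the strong Markov property at $\tau_i$ together with the hypothesis \eqref{estinf} evaluated at $t = \tau_i$ and $z = Z_{\tau_i} \in A$ gives
\[
P(E_i \mid \mathcal{F}_{\tau_i}) \;\geq\; \varsigma \qquad \text{on } \{\tau_i < \infty\}.
\]
On $R_A$ every $\tau_i$ is finite, so $\sum_i P(E_i \mid \mathcal{F}_{\tau_i}) = +\infty$ almost surely there. L\'evy's conditional Borel--Cantelli lemma (cf.\ \cite{Williams}), applied to the filtration $(\mathcal{F}_{\tau_{i+1}})_{i \geq 0}$ with respect to which $E_i$ is measurable, then yields $\sum_i \mathbf{1}_{E_i} = +\infty$ almost surely on $R_A$. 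Since $\tau_i \to \infty$, each $E_i$ places a point of $\{t > 0 : (Z_{t+s} : s \in [0,1]) \in B\}$ inside $[\tau_i, \tau_i + 1]$, so this latter set is unbounded a.s.\ on $R_A$; in other words, $B$ recurs on $R_A$. Taking probabilities yields $P(B \text{ recurs}) \geq P(R_A) = p$, which is the claim.

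The main delicate point I anticipate is purely measure-theoretic: the hitting times $\tau_i$ must be genuine stopping times, so that $\mathcal{F}_{\tau_i}$ is well defined and the strong Markov property can be invoked, and L\'evy's conditional Borel--Cantelli lemma must be applied on the (not necessarily total) event $R_A$. Both issues are standard under mild regularity of $Z$ and the underlying filtration (right-continuous trajectories and a right-continuous complete filtration), which are in force throughout the setting of Section~\ref{sec2}, so no further work beyond pointing them out is needed.
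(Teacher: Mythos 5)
Your proposal is correct and follows essentially the same route as the paper's proof: the same recursively defined hitting times spaced by at least $1$, the strong Markov property combined with \eqref{estinf} to lower-bound the conditional probabilities by $\varsigma$, and L\'evy's conditional Borel--Cantelli lemma applied along the filtration generated up to these times to conclude that $B$ recurs on the recurrence event of $A$. The only cosmetic difference is your explicit remark on the measure-theoretic regularity needed for the stopping times, which the paper leaves implicit.
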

\begin{proof}
Let us define  $ W=\{\{s >0 : Z_s \in A\}\,\, \text{\rm
unbounded}\}$. If  $W$ occurs we can define recursively an infinite
sequence of stopping times
$(T_j:j \geq 0)$ such that $T_0=0$ and
\begin{equation}\label{tempij}
    T_{j+1}=\inf\{t\geq T_j+1:Z_t \in A\},\,\,\text{\rm for } j  \geq 0,
\end{equation}
with the convention that $\inf \emptyset =\infty$.

Let
\begin{equation} \label{filtravodka}
\mathcal{ G}_n = \sigma\text{-algebra}( Z_t : t \leq T_{n+1}),
\end{equation}
for $n \geq 0$.

By the strong Markov property and formula \eqref{estinf}, one
obtains
\begin{equation}\label{estinf2}
P\left (  (Z_{T_j+s} : s \in [0,1]       )\in B | \mathcal{G}_{j-1}
\right ) 
\geq \varsigma  ,
\end{equation}
on $W$, for $j \in \mathbb{N}$. By \eqref{tempij} and
\eqref{filtravodka} one has $\{ (Z_{T_j+s} : s \in [0,1] ) \in B\}
\in \mathcal{G}_j$.

Formula \eqref{estinf2} gives that
$$
\sum_{j=1}^\infty P \left (     (Z_{T_j+s} : s \in [0,1]) \in B|
\mathcal{G}_{j-1} \right ) =\infty  \text{ on } W.
$$
The L\'{e}vy's extensions of the Borel-Cantelli Lemma
 implies that
$$
\sum_{j=1}^\infty \, \textbf{1}_{ \left \{ (Z_{T_j+s} : s \in [0,1]
) \in B  \right \} } =\infty \qquad P-a.s.
$$
on  $W$. Therefore $B $ recurs at least with probability $p$.
\end{proof}

Now we  introduce a new condition on the  $d$-$\mathcal{E}$ graphs.
\begin{definition} \label{stable}
Let us consider $k \in \mathbb{N}$. We say that a $d$-$\mathcal{E}$
graph $G=(V,E)$ is $k$-\emph{stable} if there exists a vertex $v \in
V $ having $d_v $ even such that both the following conditions are
satisfied
\begin{itemize}
\item if $x \in \partial^{ext}\{v\}$, then $d_x \geq 3$;
\item if $A\in \mathcal{A}_k$ is such that $v \in
\partial A \cup \partial^{ext} A$ and  $|A| \geq 2$ then
$$
|\{\{x , y\}\in E: x \in \partial A , \text{ }  y  \in \partial^{ext} A \}| - d_v 
>0 .
$$
\end{itemize}
\end{definition}
Notice that if a $d$-$\mathcal{E}$ graph is $k$-stable, then it is
$k'$-stable, for any $k'<k$, being $\mathcal{A}_{k'} \subset
\mathcal{A}_k$. We also notice that for $k=1 $ the second condition
of Definition \ref{stable} is automatically satisfied.

An example of a $k$-stable  $d$-$\mathcal{E}$ graph is the cubic
lattice $\mathbb{L}_d =(\mathbb{Z}^d,\mathbb{E}_d)$, for each $k \in
\mathbb{N}$ and $d \geq 2$.

The case $\mathbb{L}_1 =(\mathbb{Z},\mathbb{E}_1)$ is an example of
a $d$-$\mathcal{E}$ graph that is not $1$-stable, and thus it is not
$k$-stable for each $k \in \mathbb{N}$. In fact, the first condition
in Definition \ref{stable} is not satisfied.


\begin{lemma}\label{lemmafacile}
 Let $k \in \mathbb{N}$ and $G=(V,E)$ be a $k$-stable
 $d$-$\mathcal{E}$ graph. The vertex $v \in V$ is as in Definition
 \ref{stable}. Consider  $F_v =\{  f_1, \ldots , f_{d_v}\}$ as the set of incident edges to $v$,
 i.e.  $f_i \cap \{v \} = \{v \}$, for $i =1, \ldots , d_v$.
 The interactions $\mathcal{J} $ are taken as follows
\begin{equation*}\label{datesuv}
    J_e= \left \{
\begin{array}{ll}
  +1, & \text{ if } e \notin  F_v  \\
  -1, & \text{ if } e = f_i \text{ for } i = 1, \ldots , d_v /2\\
  +1, & \text{ if } e = f_i \text{ for } i = d_v /2+ 1, \ldots , d_v  \\
\end{array}
    \right .
\end{equation*}
 and the configuration $\sigma=(\sigma_u:u \in V)\in \{-1,+1\}^V$ with $\sigma_u = +1 $
 for $u \in V \setminus \{v\}$.
 Then, for any $A \in \mathcal{A}_k $ with $A \neq \{v\}$ one has $\Delta_A
\mathcal{H}_{\mathcal{J}} (\sigma ) \geq 2$.
\end{lemma}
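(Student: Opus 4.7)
The plan is to expand $\Delta_A\mathcal{H}_{\mathcal{J}}(\sigma)=2\sum_{\{x,y\}:\,x\in\partial A,\,y\in\partial^{ext}A} J_{\{x,y\}}\sigma_x\sigma_y$ and split the contributions according to whether the edge touches $v$. Since $\sigma_u=+1$ for every $u\neq v$ and $J_e=+1$ for every $e\notin F_v$, every boundary edge avoiding $v$ contributes $+1$ to the inner sum. Denote by $N_{\bar v}$ the number of such edges and by $N_v$ the number of boundary edges incident to $v$; a boundary edge $f_i\in F_v$ contributes $J_{f_i}\sigma_v\in\{-1,+1\}$, and since exactly $d_v/2$ of the $d_v$ edges in $F_v$ satisfy $J_{f_i}\sigma_v=-1$ (for either value of $\sigma_v$), at most $\min(N_v,d_v/2)$ of the $N_v$ boundary edges at $v$ contribute $-1$. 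Writing $S$ for the inner sum, this yields
\begin{equation*}
S\;\geq\; N_{\bar v}+N_v-2\min(N_v,d_v/2),
\end{equation*}
so it suffices to prove $S\geq 1$.

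Next I would distinguish three cases according to the position of $v$ relative to $A$. If $v\notin\partial A\cup\partial^{ext}A$, then $N_v=0$ and $S\geq N_{\bar v}\geq 1$, the last inequality holding because $A$ is a finite nonempty subset of the connected infinite graph $G$. If $|A|=1$ with $A\neq\{v\}$ and $v\in\partial A\cup\partial^{ext}A$, then $A=\{u\}$ for some $u\in\partial^{ext}\{v\}$; the first $k$-stability condition of Definition \ref{stable} gives $d_u\geq 3$, and since $\partial A=\{u\}$ while $\partial^{ext}A$ consists of the $d_u$ neighbors of $u$, one has $N_v=1$ and $N_{\bar v}=d_u-1\geq 2$, so $S\geq(d_u-1)-1\geq 1$. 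Finally, if $|A|\geq 2$ and $v\in\partial A\cup\partial^{ext}A$, then the second $k$-stability condition yields $N_v+N_{\bar v}\geq d_v+1$; for $N_v> d_v/2$ one obtains $S\geq N_{\bar v}+N_v-d_v\geq 1$, while for $N_v\leq d_v/2$ one obtains $S\geq N_{\bar v}-N_v\geq(d_v+1-N_v)-N_v=d_v+1-2N_v\geq 1$.

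The only delicate point is the last sub-case ($|A|\geq 2$, $N_v\leq d_v/2$): here the \emph{strict} inequality in the second condition of Definition \ref{stable} is exactly what supplies the unit of slack needed to absorb the worst-case scenario in which every boundary edge at $v$ lies in the $-1$ half of $F_v$. Collecting the three cases gives $S\geq 1$ whenever $A\neq\{v\}$, hence $\Delta_A\mathcal{H}_{\mathcal{J}}(\sigma)=2S\geq 2$, as required.
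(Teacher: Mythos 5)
Your proposal is correct and follows essentially the same route as the paper's proof: the same three-case split (the set $A$ does not involve $v$; $A$ is a singleton neighbour of $v$; $|A|\geq 2$ with $v$ on the boundary), with the two $k$-stability conditions used in exactly the same places. The only difference is cosmetic: your sub-case distinction $N_v \lessgtr d_v/2$ in the last case is not needed, since the uniform bound $S \geq N_{\bar v}+N_v - 2(d_v/2) = N_{\bar v}+N_v-d_v \geq 1$ (which is what the paper uses) already covers both sub-cases.
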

\begin{proof}
If $A \in \mathcal{A}_k $ and $ v\notin \partial A \cup
\partial^{ext} A $, then there is nothing to prove because all the
interactions and spins involved in the computation of $\Delta_A
\mathcal{H}_{\mathcal{J}} (\sigma ) $ are equal to $+1$.

If $A = \{x\}$ with  $v \in \partial^{ext} A $ then
$$
\Delta_{\{x\}} \mathcal{H}_{\mathcal{J}}(\sigma)  = 2 \sum_{y\in
V:\{x , y\}\in E}
 J_{\{x, y\} }\sigma_x \sigma_y = 2 J_{\{x, v\} }\sigma_x \sigma_v+
 2 \sum_{ y \in V\setminus \{v\} :\{x , y\}\in E} J_{\{x, y\} }\sigma_x \sigma_y
$$
$$
= 2 J_{\{x, v\} }\sigma_x \sigma_v   + 2(d_x-1)   \geq  2(d_x-2)\geq
2 .
$$
If  $A\in \mathcal{A}_k$ is such that $v \in
\partial A \cup \partial^{ext} A$ and  $|A| \geq 2$ then
$$
\Delta_A \mathcal{H}_{\mathcal{J}}(\sigma)  = 2 \sum_{\{x , y\}\in
E: x\in \partial A, y \in \partial^{ext} A }
 J_{\{x,y\} }\sigma_x \sigma_y.
 $$
The previous sum is done on $|\{\{x , y\}\in E: x\in \partial A, y
\in \partial^{ext} A \}|$  terms. At most $d_v /2$ of theses terms
are equal to $-1$. Therefore one has
$$
\Delta_A \mathcal{H}_{\mathcal{J}}(\sigma) \geq 2  \left ( |\{\{x ,
y\}\in E: x\in \partial A, y \in \partial^{ext} A \}| -d_v\right
)\geq 2 .
$$

\end{proof}

\begin{theorem} \label{infiniti}
Let us consider  a
 $(k ,\alpha, \gamma ; T )$-model on a $k$-stable $d$-$\mathcal{E}$ graph $G=(V, E)$,
 with $ k \in \mathbb{N}$,
 $ \alpha \in (0,1)$, and  $\gamma \in [0,1]$.
 If  $T$ is fast decreasing to zero and
 positive, then  $\rho_{\mathcal{I}}>0$.
\end{theorem}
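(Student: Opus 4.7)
The plan is to prove the theorem by exhibiting a positive-probability event on which the distinguished vertex $v$ of Definition \ref{stable} flips infinitely many times. By the translation-ergodicity used to define $\rho_{\mathcal{I}}$, this will imply $\rho_{\mathcal{I}}>0$.

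Fix a ball $B_R(v)$ with $R$ large compared to $k$, and consider the intersection $E=E_1\cap E_2\cap E_3$ of three events. First, $E_1$ is the event that on every edge with both endpoints in $B_R(v)$, the interaction $J_e$ takes the value prescribed by Lemma \ref{lemmafacile} (namely $+1$ off $F_v$, and the prescribed half-negative/half-positive pattern on $F_v$); since only finitely many edges are involved and $\alpha\in(0,1)$, $\mu_{\mathcal{J}}(E_1)>0$. Second, $E_2$ is the event $\sigma(0)\bigl|_{B_R(v)\setminus\{v\}}\equiv +1$: for $\gamma\in(0,1)$ this has positive $\nu_{\sigma(0)}$-probability, and for $\gamma\in\{0,1\}$ the case ``constant $=-1$'' reduces to ``constant $=+1$'' via the global symmetry $\sigma\mapsto-\sigma$, which leaves $\mathcal{H}_{\mathcal{J}}$ and every $\Delta_A\mathcal{H}_{\mathcal{J}}$ invariant (so that Lemma \ref{lemmafacile} applies equally to the all-$-1$-except-$v$ configuration). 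Third, $E_3=\bigcap_{x\in B_R(v)}\{\mathcal{N}^-_x=\emptyset\}$ has positive probability by Lemma \ref{serve}, formula \eqref{fa}. The three pieces depend on independent components of the product measure $P_{k,\mu_{\mathcal{J}},\nu_{\sigma(0)}}$, so $\mathbb{P}(E)>0$.

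The core technical step, and what I expect to be the main obstacle, is the invariance $\sigma(t)\bigl|_{B_L(v)\setminus\{v\}}\equiv +1$ for every $t\geq 0$ and some suitable $L$ with $L+k<R$. The key observation is that the bound $\Delta_A\mathcal{H}_{\mathcal{J}}(\sigma)\geq 2$ of Lemma \ref{lemmafacile} (for $A\neq\{v\}$) is really local: it depends only on the restriction of $(\sigma,\mathcal{J})$ to $A\cup\partial^{ext}A$ and to the edges between $\partial A$ and $\partial^{ext}A$. Since any connected $A\in\mathcal{A}_k$ meeting $B_L(v)$ satisfies $A\cup\partial^{ext}A\subset B_{L+k}(v)$, as long as the configuration is $+1$ on $B_{L+k}(v)\setminus\{v\}$ at the relevant instant, every non-$\{v\}$ flip touching $B_L(v)$ is costly and hence forbidden by $E_3$. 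The honest difficulty lies with flips at sets $A$ close to the rim of $B_R(v)$, whose computation involves uncontrolled spins of $V\setminus B_R(v)$ and can therefore be non-costly and gradually corrupt the configuration near the rim of $B_R(v)$. To prevent this corruption from ever reaching $B_L(v)$ I would run a nested-ball / ``infection propagation'' argument: letting $U_t=\{x\in B_R(v)\setminus\{v\}:\sigma(t)_x\neq +1\}$ and noting $U_0=\emptyset$ on $E$, one shows inductively on flip times that $U_t$ can only grow inward one step at a time and that each such inward step would require a non-costly flip whose local data still match Lemma \ref{lemmafacile}, producing a contradiction with $E_3$. Choosing $R$ sufficiently large relative to $L$ and $k$ (according to the bound for this propagation) closes the induction.

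Granting the invariance, the conclusion is immediate. Under the Lemma \ref{lemmafacile} interactions, with every neighbor $y$ of $v$ carrying spin $+1$ at time $t$, formula \eqref{DeltaHonA} gives
\[
\Delta_{\{v\}}\mathcal{H}_{\mathcal{J}}(\sigma(t))
= 2\sigma_v(t)\sum_{y\in\partial^{ext}\{v\}} J_{\{v,y\}}=0,
\]
so by \eqref{cA} each arrival of the rate-$1$ Poisson process $\mathcal{P}_{\{v\}}$ triggers an actual flip of $v$ with conditional probability $1/2$. A conditional Borel--Cantelli argument in the spirit of the one used in the second item of Theorem \ref{opposition} then yields $|\mathcal{N}_v|=\infty$ almost surely on $E$, whence $\mathbb{P}(|\mathcal{N}_v|=\infty)\geq\mathbb{P}(E)>0$ and $\rho_{\mathcal{I}}>0$ by ergodicity. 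The positivity of $T$ is used through the rate formula \eqref{cA} (which is the formula applied at every finite $t$) and in the hypotheses of Lemma \ref{serve}, which underlie $E_3$ and thereby the whole construction.
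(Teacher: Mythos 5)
Your reduction of the theorem to the invariance claim is where the proof breaks, and the nested-ball ``infection propagation'' argument cannot close it. The event $E_3$ only forbids flips with $\Delta_A \mathcal{H}_{\mathcal{J}}>0$ (recall $\mathcal{N}^-_x$ counts flips in opposition of the Hamiltonian); it places no constraint whatsoever on the number of energy-neutral or energy-lowering flips, and over an infinite time horizon these suffice to erode the plus region from the rim of $B_R(v)$ all the way to $v$, no matter how large $R$ is. Concretely, on $\mathbb{L}_2$ (which is $k$-stable) with $\mathcal{J}\equiv +1$ on $E(B_R(v))$, if the exterior spins are (or become) $-1$ near the rim, the corner site $(0,R)$ of the ball $B_R(v)$ has three exterior neighbours and one interior neighbour, so flipping it has $\Delta<0$; after that, second-layer sites acquire two disagreeing neighbours and flip at cost $0$, and the corrupted set advances inward indefinitely by flips that $E_3$ does not see. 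Your inductive step asserts that each inward step is ``a non-costly flip whose local data still match Lemma \ref{lemmafacile}'', but the hypothesis of Lemma \ref{lemmafacile} is that the configuration is $+1$ everywhere off $v$ in the relevant neighbourhood: the first corrupted spin destroys exactly this local data, and from then on the lemma gives no lower bound on $\Delta_A$ for sets adjacent to the corruption. There is no ``bound for this propagation'' in terms of $R-L$; indeed the paper itself exploits this very erosion mechanism (finite plus clusters are $1$-absent, proof of Theorem \ref{thm:ultimissimo}). A telltale symptom is that your argument never genuinely uses the positivity of $T$ (conditioning on $\sigma(0)$ and invoking Lemma \ref{serve} both work for $T\equiv 0$), whereas the paper stresses that positivity is required precisely for this theorem.

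The paper's proof circumvents the uncontrolled exterior in a way your construction has no substitute for. It argues by contradiction from $\rho_{\mathcal{I}}=0$, so that the spins on the annulus $B_{8k}(v)\setminus B_{4k}(v)$ fixate by some time $M$; it then builds a coupled ``capital system'' whose interactions on the rim edges are chosen to \emph{match the fixated exterior spins}, $\bar J_e=\sigma_x(M)$ for $x \in \partial^{ext}B_{4k}(v)$, which gauges the exterior to effectively all-plus so that Lemma \ref{lemmafacile} makes every crossing or interior flip set (other than $\{v\}$) strictly costly; and it uses $T>0$ in an interval $[M,M']$ (the constant $\eta_{M,M'}>0$ and the auxiliary events $F^{(4)},F^{(5)}$) to force the ball $B_{4k}(v)$ to all-plus \emph{at that late time} rather than at time $0$. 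Fixation of the exterior plus the shared $U$'s then rule out any later corruption entering the ball, which is the step your conditioning on the initial configuration cannot deliver. Without the contradiction hypothesis (to know the rim spins stabilize), the interaction-matching trick (to make crossing flips costly), and the positive-temperature reset, the invariance you need is simply false as stated.
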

\begin{proof} \label{secondaprova}

By contradiction, we suppose that $\rho_{\mathcal{I}} =0$.

In this proof we will use several stochastic Ising models sharing
the same realized random configuration $\sigma$ but with different
interactions. Among them, we denote by \emph{original system} the
model on the $k$-stable $d$-$\mathcal{E}$ graph $G=(V, E)$ with
realized random interactions $\mathcal{J}$. In the graphical
representation we denote the Poisson processes associated to the
original model by $(\mathcal{P}_A: A \in \mathcal{A}_k)$, and the
related  uniform random variables by $(U_{A,n}: A \in \mathcal{A}_k,
n \in \mathbb{N})$.

By the assumption that the $d$-$\mathcal{E}$ graph is $k$-stable,
there exists a vertex $v \in V$ with even degree satisfying the
properties given in Definition \ref{stable}. We select such a
vertex. Let us consider the sets $B_{4k}(v)$ and $B_{8k}(v) $, where
we are using the constant $k $ given in the statement of the
theorem.

Define
\begin{equation*}\label{oranumero2}
T_{V_0} =\inf \{ t\geq 0  : \forall t' \geq t, \forall x \in V_0,
 \text{ }\sigma_x (t') = \sigma_x (t)   \} ,
 \end{equation*}
for a finite subset $V_0 \subset V$. We notice that $ T_{V_0}$ is
not a stopping time.


For $M\in \mathbb{R}^+$, let us introduce the event $F^{(1)}_M = \{
T_{B_{8k}(v) \setminus B_{4k}(v)
 } <  M\} $. It results that $F^{(1)}_{M_1} \subset F^{(1)}_{M_2}$
 when $M_1 < M_2$.

By hypothesis that $\rho_{\mathcal{I}} =0$,
\begin{equation}\label{oranumero}
\lim_{M \to \infty}\mathbb{P}(F^{(1)}_M  ) =1.
\end{equation}

%
%

Now, recall the concept of $F_v=\{f_1, \dots, f_{d_v}\}$ introduced
in Lemma \ref{lemmafacile} and consider all the finite possible
interactions $\mathcal{J}^{(1)}, \ldots, \mathcal{J}^{(q)}, \ldots,
\mathcal{J}^{(Q)} \in \{-1,+1\}^E$ obtained by taking:

%
\begin{itemize}
\item[A)] all the interactions between
two vertices $x,y  \in B_{4k}(v)$ such that $\{x,y\} \in E \setminus
\{f_1, \dots, f_{d_v/2}\}$ are equal to $+1$, while the interactions
$ J_{f_i}$, where $f_i \in F_v$ with $i =1, \ldots , d_v/2$, that
are equal to $-1$;
\item[B)] if $e = \{x,y\} \in E $ with $ x \in \partial^{ext} B_{4k}(v)$
and $ y \in \partial B_{4k}(v) $, the interaction $J_e $ is
arbitrary;
\item[C)] all the remaining interactions coincide with those of the
realized interactions $\mathcal{J}$ of the original system.
\end{itemize}
The value $Q $ is finite because we are dealing with $d$-graphs, and
it depends on $G$, $k$ and $v$.

We will construct a coupling among all the different systems with
interactions $\mathcal{J}^{(1)}, \ldots, \mathcal{J}^{(q)}, \ldots,
\mathcal{J}^{(Q)} $. 

 All the quantities related to the $q$-th system are
denoted, in a natural way, by adding, when needed, to the original
quantities the superscript $(q)$. 

Recall that all the systems described above share the initial
configuration $\sigma$ of the original one. Moreover, the collection
of Poisson processes $ (\mathcal{P}_A: A \in \mathcal{A}_k)$ are
assumed to be common for all the $Q$ systems until time $M$ and,
accordingly, the systems share also the same variables $U$'s defined
in \eqref{UAn}.

Given the $q$-th system, we define the set
$$
\mathcal{D}^{(q)}=\{x \in V : \sigma_x(M)\not= \sigma_x^{(q)}(M) \}.
$$
It is known that $|\mathcal{D}^{(q)}|<\infty$ a.s. (see e.g.
\cite{Richardson}).

We introduce the random set
$$
\mathcal{D}=\left[\bigcup_{q=1}^Q\mathcal{D}^{(q)} \right] \cup
B_{4k}(v),
$$
so that $|\mathcal{D}|<\infty$ a.s.

For any $M \in \mathbb{R^+}$ and $M'>M$, we define
$$
F^{(2)}_{M,M'} =  \bigcap_{  A \in \mathcal{A}_k : (A\cup
\partial^{ext}A) \cap \mathcal{D} \neq \emptyset  }
\{\mathcal{T}_A \cap [M,M']=\emptyset  \} ,
$$
i.e. there are no arrivals in the time interval $[M,M']$ for the
Poisson processes $\mathcal{P}_A$ labeled by $A \in \mathcal{A}_k $
with $ (A\cup
\partial^{ext}A) \cap \mathcal{D} \neq \emptyset $. In this case one has
$F^{(2)}_{M,M_1'} \supset F^{(2)}_{M,M_2'}$ when $ M < M_1' <M_2'$.
Furthermore,
\begin{equation}
\label{M'} \lim_{M' \to M^+} \mathbb{P}( F^{(2)}_{M,M'}  )=1.
\end{equation}
Notice that the probability in \eqref{M'} depends only on the
difference $(M' - M)$.

Among all the different systems --described in A), B) and C)-- we
select the only one having interactions such that $J_e
=\sigma_{x}(M) $ when $e = \{x,y\} \in E $ with $ x \in
\partial^{ext}B_{4k}(v)$ and $ y \in
\partial B_{4k}(v) $. We denote such random interactions as
$\bar{\mathcal{J}}$. We call the corresponding system as
\emph{capital system}, this process is denoted by $(\Sigma(t): t
\geq 0)$. We assume that it
 is a stochastic Ising model, and we will
check it below.

We denote by $\bar{\mathcal{S}}_{A,t}^-$ (resp.
$\bar{\mathcal{N}}_{x}^-$) the random set in \eqref{SAt-} (resp. in
\eqref{N-}), by replacing $\sigma$ with $\Sigma$.


For $M \in \mathbb{R}^+$, we define
$$
F^{(3)}_{M} =\bigcap_{  A \in \mathcal{A}_k : A \subset B_{4k} (v) }
\{ \bar{\mathcal{S}}_{A,\infty}^- \cap [M, \infty)= \emptyset  \} .
$$
One has that $F^{(3)}_{M_1} \subset F^{(3)}_{M_2}$
 when $M_1 < M_2$.

Observe that
$$
\bigcap_{  x \in B_{4k} (v) } \{ \bar{\mathcal{N}}_{x}^- \cap [M,
\infty)= \emptyset  \} \subset F^{(3)}_{M}.
$$
Since $(\Sigma(t): t \geq 0)$ will be proved to be a stochastic
Ising model, we can use \eqref{eq:thm1new} in Lemma \ref{serve} in
order to obtain
\begin{equation}\label{F3M}
    \lim_{M \to \infty } \mathbb{P} ( F^{(3)}_{M}) =1.
\end{equation}

By \eqref{oranumero} and \eqref{F3M} one can select $M$ large enough
to have that $\mathbb{P} ( F^{(i)}_{M})> 3/4 $, for $i=1,3$. Now, by
\eqref{M'}, one can take $M' $ close enough to $M$ so that
$\mathbb{P}( F^{(2)}_{M,M'}  )> 3/4$. Therefore, under such choices
of $M$ and $M'$, one has
$$
\mathbb{P}(F^{(1)}_{M} \cap F^{(2)}_{M,M'} \cap F^{(3)}_{M'} )\geq
\mathbb{P}(F^{(1)}_{M} \cap F^{(2)}_{M,M'} \cap F^{(3)}_{M} )
>\frac{1}{4}.
$$

We define $ (\mathcal{\bar P}_{A}: A \in \mathcal{A}_k) $ a set of
Poisson processes with rate $1$ that are active in the time interval
$[M, M']$ and 
independent from all the other Poisson processes and also from the
$U$'s of the original system.

We call $ \bar{ \mathcal{T}}_{A} =( {\bar \tau}_{A,n} \in [M,M']: n
\in \mathbb{N})$ the set of arrivals of the Poisson process
$\mathcal{\bar P}_{A}$. In order to use Harris' graphical
representation, we define a set of uniform random variables $ ({\bar
U}_{A, n}:  n \in \mathbb{N}) $ associated to $\bar{
\mathcal{T}}_{A}$, for any $A \in \mathcal{A}_k$. The ${\bar U}$'s
are independent from all the Poisson processes and from the $U$'s of
the original system.

Given $A \in \mathcal{A}_k$, we consider a new process $
\mathcal{P}^\star_{A}$ whose set of arrivals $\mathcal{T}^\star_{A}$
is defined as follows:
\begin{equation}
\label{Pstar} \mathcal{T}^\star_{A}= \left\{%
\begin{array}{ll}
   \{\mathcal{T}_{A} \cap [0,M]\}\cup \mathcal{\bar T}_{A} \cup \{\mathcal{T}_{A} \cap  [M',\infty)\}, & \hbox{if $A=\{x\}$ and $x \in \mathcal{D}$;} \\
\mathcal{T}_{A} , & \hbox{otherwise.}
\end{array}%
\right.
\end{equation}
For each $A\in\mathcal{A}_k$, the set $\mathcal{T}^\star_{A}$ is
distributed as a Poisson process of rate 1, and $
(\mathcal{P}^\star_{A}: A \in \mathcal{A}_k) $ is a set of
independent Poisson processes with rate $1$, which are also
independent from the $U$'s and from the $\bar{U}$'s.

The $U$'s (resp. $\bar{U}$'s) are used when the arrivals of the
Poisson processes are taken from $\mathcal{T}_{A}$ (resp. from
$\mathcal{\bar T}_{A}$).

All these random quantities are employed to construct the capital
system $(\Sigma(t):t \geq 0)$ via the graphical representation.
Since $(\mathcal{P}^\star_{A}: A \in \mathcal{A}_k) $ is a
collection of independent Poisson processes and by definition of the
$U$'s and the $\bar{U}$'s, the capital system is a stochastic Ising
model.

We set the constant
$$
\eta_{M, M'} =\inf_{t \in [M,M']} \inf_{x \in V} \inf_{\sigma \in
\{-1,+1\}^V} \inf_{\mathcal{J} \in \{-1,+1\}^E} \{ c^{\mathcal{J},
T,(\{x\})}_t (\sigma ) , 1-  c^{\mathcal{J}, T,(\{x\})}_t (\sigma )
\}.
$$
We notice that $\eta_{M, M'}>0$. In fact, the temperature profile
$T$ is positive in $[M,M']$ and $t $ ranges in a compact interval.
Moreover, $x$ could be taken only in $Cell$ for the periodicity of
the $d$-graph, and this implies also that $\mathcal{J}$ and $\sigma$
could be considered only in a finite region.

We define the events
$$
F^{(4)}_{M,M'} = \bigcap_{x \in \mathcal {D} \setminus B_{4k}(v)
}\Big[ \{\bar{ \mathcal{T}}_{\{x\}}= \{ \bar \tau_{\{x\}, 1}\}\}
$$$$
\cap\{ \bar U_{\{x\},1} \mathbf{1}_{\{\Sigma_x(M)
\not=\sigma_x(M)\}}<\eta_{M, M'} \} \cap \{ \bar U_{\{x\},1} \geq
\mathbf{1}_{\{\Sigma_x(M) =\sigma_x(M)\}}(1-\eta_{M, M'}) \}\Big]
$$
and
$$
F^{(5)}_{M,M'} = \bigcap_{x \in B_{4k}(v) }\Big[ \{\bar{
\mathcal{T}}_{\{x\}}= \{ \bar \tau_{\{x\}, 1}\}\}
$$
$$
\cap\{ \bar U_{\{x\},1} \mathbf{1}_{\{\Sigma_x(M) =-1\}}<\eta_{M,
M'} \} \cap \{ \bar U_{\{x\},1} \geq \mathbf{1}_{\{\Sigma_x(M)
=+1\}}(1-\eta_{M, M'}) \}\Big].
$$
By the independence properties of the considered random quantities,
it results
$$
\mathbb{P}\left(F^{(4)}_{M,M'}\cap F^{(5)}_{M,M'}\, \Big| \,
F^{(1)}_{M} \cap F^{(2)}_{M,M'}\cap F^{(3)}_{M'}\right)
$$
$$
\geq \sum_{\ell=0}^\infty
\left[(M'-M)e^{-(M'-M)}\eta_{M,M'}\right]^\ell \cdot
\mathbb{P}\left(|\mathcal{D}|=\ell \,\Big|\,F^{(1)}_{M} \cap
F^{(2)}_{M,M'}\cap F^{(3)}_{M'}\right)>0,
$$
and this gives $ \mathbb{P}\left(F^{(1)}_{M} \cap F^{(2)}_{M,M'}\cap
F^{(3)}_{M'} \cap F^{(4)}_{M,M'}\cap F^{(5)}_{M,M'}\right)>0 $.

\medskip

Assume from now on that $F^{(1)}_{M} \cap F^{(2)}_{M,M'}\cap
F^{(3)}_{M'} \cap F^{(4)}_{M,M'}\cap F^{(5)}_{M,M'}$ occurs.

We claim that $\Sigma_u(M')=+1$, for each $u \in B_{4k}(v)$. In
particular, on $F^{(2)}_{M,M'} \cap F^{(5)}_{M,M'}$, for $u \in
B_{4k}(v) $ one has the occurrence of only one arrival in $[M, M']$
of the Poisson processes involving $u$, and such an arrival is
$\bar{\tau}_{\{u\},1}$. On $F^{(5)}_{M,M'}$ this arrival generates a
flip at $u$ for the capital system if and only if $\Sigma_u(M)=-1$.

We also claim that $\Sigma_u(M')= \sigma_u(M')$ for each $u \notin
B_{4k}(v)$. We prove it by separating two subcases: (i) $u \in
\mathcal{D} \setminus B_{4k}(v) $ and  (ii) $u \notin \mathcal{D} $.

(i) On $F^{(2)}_{M,M'} \cap F^{(4)}_{M,M'}$ for $u \in \mathcal{D}
\setminus B_{4k}(v) $ there is only one arrival in $[M, M']$ of the
Poisson processes involving $u$, and the arrival is
$\bar{\tau}_{\{u\},1}$. On $F^{(4)}_{M,M'}$ this arrival generates a
flip at $u$ for the capital system if and only if $\Sigma_u(M)\not=
\sigma_u(M)$.

(ii) On $F^{(2)}_{M,M'} $ one knows that the arrivals in the
interval $[M, M']$ involving the sites outside $ \mathcal{D}$ are
the same for the original and the capital system. Moreover, the
original and the capital system share the same configuration outside
$\mathcal{D}$ and the same $U$'s. Therefore, such systems have
identical flips outside $ \mathcal{D}$. Since $\Sigma_u(M)=
\sigma_u(M)$ for $u \notin \mathcal{D}$ then $\Sigma_u(t)=
\sigma_u(t)$ for $u \notin \mathcal{D}$ and $t \in[M,M']$.

\medskip

We now need to check the following:
\begin{itemize}
\item[$(\mathbf{H})$]
for any $t \geq M'$ we have $ \Sigma_u (t) = \sigma_u (t) $ when $u
\notin B_{4k} (v)$ and $ \Sigma_u (t) = +1 $ when $u \in B_{4k}
(v)\setminus \{v\}$.
\end{itemize}

We can define recursively the sequence of random times $(\psi_\ell :
\ell \in \mathbb{N}) $. Let
$$
\psi_\ell = \inf \{ \tau_{A,n}> \psi_{\ell-1} :A\in \mathcal{A}_k
\text{ with }(A \cup
\partial^{ext} A) \cap B_{4k} (v) \neq \emptyset , n \in \mathbb{N}\},
$$
for $\ell \geq 1$ and the conventional agreement that $\psi_0=M'$.

For each $\ell \geq 1$ one has that the random set
$$
\Upsilon_\ell =\{ \tau_{A,n} \in  [ \psi_{\ell-1} , \psi_{\ell}]:
A\in \mathcal{A}_k \text{ with }(A \cup
\partial^{ext} A) \cap B_{4k} (v) = \emptyset , n \in \mathbb{N} \}
$$
has infinite cardinality a.s.

In the interval $ [M',\psi_{1} )$ we are dealing only with arrivals
having $A \in \mathcal{A}_k $ such that $(A \cup
\partial^{ext} A) \cap B_{4k} (v) = \emptyset $, hence belonging to $\Upsilon_1$. Therefore,
 by construction and from the fact that
$\Sigma_u(M')= \sigma_u(M')$ for each $u \notin B_{4k}(v)$, the
original and the capital system share the same flips in
$\Upsilon_1$. This leads to 
$ \Sigma_u (t) = \sigma_u (t) $ when $u \notin B_{4k} (v)$ and $
\Sigma_u (t) = +1 $ when $u \in B_{4k} (v)\setminus \{v\}$, for $t
\in [M',\psi_{1} ]$.

Now we analyze the arrival in $\psi_1$. There exist unique (a.s.) $A
\in \mathcal{A}_k$ and $n \in \mathbb{N}$ such that the arrival
$\tau_{A,n}=\psi_1$. Four cases can be considered for the set $A$.
They are presented and discussed below.
\begin{itemize}
    \item $A=\{v\}$.

    In this case a flip at $A$ influences only $\Sigma_v(\psi_1^+)$.
    Therefore $ \Sigma_u (\psi_{1}^+) = \sigma_u (\psi_{1}^+) $
when $u \notin B_{4k} (v)$ and $ \Sigma_u (\psi_{1}^+) = +1 $ when
$u \in B_{4k} (v)\setminus \{v\}$.
    \item $(A \cup \partial^{ext}A) \subset B_{4k}(v)$ and $A \not=
    \{v\}$.

By Lemma \ref{lemmafacile} we know that $\Delta_A
\mathcal{H}_{\bar{\mathcal{J}}}(\Sigma(\psi_1)) \geq 2$. However,
the occurrence of $F^{(3)}_{M'}$ guarantees that this arrival does
not correspond to a flip.

    \item $A \subset B_{4k}(v)$ and $\partial^{ext}A \not \subset B_{4k}(v)$.

By the selected interactions $\bar{\mathcal{J}}$ for the capital
system (in particular: if $e = \{x,y\} \in E $ with $ x \in
\partial^{ext}B_{4k}(v)$ and $ y \in
\partial B_{4k}(v) $,   then $J_e =\sigma_x(M)=\sigma_{x}(M')=\Sigma_x(M') $) we are in the position of applying Lemma
\ref{lemmafacile}. Analogously to the previous case, $F^{(3)}_{M'}$
assures that there is not a flip at $A$ in $\psi_1$.

    \item $A \not \subset B_{4k}(v)$ and $(A \cup \partial^{ext}A) \cap B_{4k}(v) \not=\emptyset$.

By the selected interactions $\bar{\mathcal{J}}$ one has
$$
\Delta_A \mathcal{H}_{\bar{\mathcal{J}}}(\Sigma(\psi_1)) \geq
\Delta_A \mathcal{H}_{\mathcal{J}}(\sigma(\psi_1)).
$$
Formula \eqref{cA} says that
\begin{equation}
\label{CA2} c^{\bar{\mathcal{J}}, T,(A)}_{\psi_1}(\Sigma(\psi_1) )
\leq c^{\mathcal{J}, T,(A)}_{\psi_1}(\sigma(\psi_1) ).
\end{equation}
The occurrence of $F^{(1)}_M$ assures that there are no flips at $A$
after time $M$ for the original system. Hence, since the original
and the capital system share the same $U$'s, \eqref{CA2} states that
there is not a flip at $A$ in $\psi_1$ for the capital system.
\end{itemize}

By iterating the arguments above, we obtain $(\mathbf{H})$. In
particular, the event
$$
\bar{A}=\{ \text{all the spins in } B_{4k}(v) \setminus \{v\} \text{
are equal to } +1      \}
$$
recurs with probability  $ p \geq \mathbb{P}\left(F^{(1)}_{M} \cap
F^{(2)}_{M,M'}\cap F^{(3)}_{M'} \cap F^{(4)}_{M,M'}\cap
F^{(5)}_{M,M'}\right)>0 $.

When $\Sigma (t ) \in \bar{A}$ there is a positive probability that
there is an arrival for the Poisson $\mathcal{P}_{\{v\}} $ in the
interval $[t, t+1]$. Such an arrival has probability $1/2$ to
generate a flip at $\{v\}$, being $\Delta_{\{v\}}
\mathcal{H}_{\bar{\mathcal{J}}}(\Sigma(t))=0$ for each $t \geq M'$.
Thus,  by Lemma \ref{nuovoabs}, the event
$$
\{  \Sigma_v (\cdot ) \text{ has a flip in }[0,1]    \}
$$
recurs with positive probability greater than or equal to $p$.
\medskip

Re-sampling the interactions of the original system only over the
finite set
 $$
\{ e=\{u,w\} \in E:  \{ u,w\} \cap B_{4k} (v)\neq \emptyset \}
 $$
there is a positive probability that it coincides with the
interactions of the capital
system, since $\alpha \in (0,1)$. 
Therefore, also in the original one, the probability that
$|\mathcal{N}_v| = \infty$
is positive. Hence, by ergodicity,
$\rho_{\mathcal{I}}$ must be larger than zero. This concludes the
proof.
\end{proof}

\begin{rem}
We notice that, the previous theorem can be generalized by
considering  the parameter $\alpha \in (0,1)$ but giving the initial
configuration $\sigma\in \{-1,+1\}^V$. In this case we can only
prove, following the same line of the previous proof, that there
exists a positive frequency of vertices that flip infinitely often.
Hence, in general, we only know that
$$
\liminf_{\ell \to \infty } \frac{| \{ x \in  B_\ell (O) :  x \text{ flips
finitely many times} \} |}{ | B_\ell (O) | } >0,
$$
but the limsup could not coincide with the liminf  and they could
depend on the initial configuration $\sigma $. We can conclude that
for any initial configuration $\sigma$ the model is of type $
\mathcal{M}$ or $\mathcal{I}$. This means that there is not
convergence of the process  almost surely.
\end{rem}

We now present a theorem in which we explore the possibility that
$\rho_{\mathcal{I}} $ is larger than zero without requiring the
positivity of the temperature profile. The basic assumption is that
some specific sites fixate.   We address the reader to the next
section for sufficient conditions to obtain $\rho_{\mathcal{F}} >0
$.

\begin{theorem}\label{unione}
Let us consider a $ (k,  \alpha , \gamma; T)$-model on a $d$-graph
$G= (V, E)$ with $k \in \mathbb{N}$, $\alpha \in
    (0,1)$, $\gamma \in [0,1]$ and $T $ fast decreasing to zero. Suppose that there
    exist finite  sets $R, R'\subset V $, a finite set
    $S \subset E$, $\hat {\sigma}_R =(\hat \sigma_u: u \in R) \in  \{-1,+1\}^R$, and
    $\hat {\mathcal{J}}_S = (\hat J_e: e \in S)\in
    \{-1,+1\}^S$ with the following properties
    \begin{itemize}
        \item[(i)]
\begin{equation}
\label{thm4new1} \mathbb{P} \left (\bigcap_{u \in R}\{\lim_{t \to
\infty} \sigma_u (t)=\hat{\sigma}_u\}   \big |   \bigcap_{e \in S}
\{ J_e = \hat J_e\} \right  )>0.
\end{equation}

        \item[(ii)] For any $\mathcal {J} $ coinciding with $ \hat
        {\mathcal{J}}_S$ on $S$ and for any  $\sigma \in \{-1,+1\}^V$ coinciding with
$\hat \sigma_R$ on $R$ there exists  $D \in
        \mathcal{A}_k $ such that $D \subset R'$, with
        $\Delta_D \mathcal{H}_{\mathcal{J}} (\sigma ) \leq 0
        $.
    \end{itemize}
Then, the model is of type $\mathcal{M}$.
\end{theorem}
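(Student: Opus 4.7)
The plan is to establish $\rho_{\mathcal{F}}>0$ and $\rho_{\mathcal{I}}>0$ separately; since $\rho_{\mathcal{F}}+\rho_{\mathcal{I}}=1$ a.s., this yields $0<\rho_{\mathcal{I}}<1$ and hence type $\mathcal{M}$.

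For $\rho_{\mathcal{F}}>0$ I would use (i) only. Because $\alpha\in(0,1)$ and $S$ is finite, $\mu_{\mathcal{J}}(\bigcap_{e\in S}\{J_e=\hat{J}_e\})>0$, so \eqref{thm4new1} yields the unconditional bound $\mathbb{P}(\bigcap_{u\in R}\{\lim_{t\to\infty}\sigma_u(t)=\hat{\sigma}_u\})>0$, and in particular $\mathbb{P}(|\mathcal{N}_u|<\infty)>0$ for every $u\in R$. The ergodic representation of $\rho_{\mathcal{F}}$ as the average of $\mathbb{P}(|\mathcal{N}_v|<\infty)$ over the finitely many orbit representatives $v\in V_{Cell}$ then forces $\rho_{\mathcal{F}}>0$.

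For $\rho_{\mathcal{I}}>0$ I would combine (i) and (ii). Set $\Omega_0=\bigcap_{e\in S}\{J_e=\hat{J}_e\}\cap\bigcap_{u\in R}\{\lim_{t\to\infty}\sigma_u(t)=\hat{\sigma}_u\}$, so $\mathbb{P}(\Omega_0)>0$. On $\Omega_0$ there is a random time $t_1$ past which $\sigma(t)$ agrees with $\hat{\sigma}_R$ on $R$. Hypothesis (ii) then produces, for each $t\geq t_1$, some $D(t)\in\mathcal{A}_k$ with $D(t)\subset R'$ and $\Delta_{D(t)}\mathcal{H}_{\mathcal{J}}(\sigma(t))\leq 0$, so by \eqref{cA}
$$
\sum_{D\in\mathcal{A}_k,\,D\subset R'} c^{\mathcal{J},T,(D)}_t(\sigma(t)) \;\geq\; c^{\mathcal{J},T,(D(t))}_t(\sigma(t)) \;\geq\; \tfrac{1}{2}\qquad(t\geq t_1).
$$
Consequently the compensator of the counting process of flips at subsets of $R'$ diverges, so a standard martingale argument gives infinitely many such flips a.s.\ on $\Omega_0$. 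Since the sites of $R$ are eventually frozen on $\Omega_0$, only finitely many of these flips can involve an element of $R$; hence infinitely many of them occur at subsets $D\subset R'\setminus R$. This is a finite collection, so pigeonhole supplies some $x_0\in R'\setminus R$ with $\mathbb{P}(|\mathcal{N}_{x_0}|=\infty)>0$, and ergodicity yields $\rho_{\mathcal{I}}>0$.

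The main obstacle is this last block: the set $D(t)$ furnished by (ii) is allowed to depend on the evolving configuration and could in principle meet $R$, so one must rule out the scenario in which the positive flip rate is always concentrated on $D$'s intersecting $R$. The uniform lower bound $c\geq 1/2$ combined with the a.s.\ finiteness of flips involving $R$ (which is forced by the very event defining $\Omega_0$) excludes this and concentrates the infinitely many flips in $R'\setminus R$, giving a single site $x_0$ that flips infinitely often with positive probability.
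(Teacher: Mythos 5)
Your proposal is correct, and the first half (deducing $\rho_{\mathcal{F}}>0$ from (i), the positivity of $\mu_{\mathcal{J}}(\bigcap_{e\in S}\{J_e=\hat J_e\})$ and spatial ergodicity) is exactly what the paper does. For the half $\rho_{\mathcal{I}}>0$ your route is genuinely different in its bookkeeping, though it exploits the same engine, namely the bound $c^{\mathcal{J},T,(D)}_t(\sigma)\geq 1/2$ whenever $\Delta_D\mathcal{H}_{\mathcal{J}}(\sigma)\leq 0$. The paper first pigeonholes on the finite family $\{D\in\mathcal{A}_k: D\subset R'\}$ to fix a single $\bar D$ such that $A_{\bar D}=\{\sigma:\sigma_R=\hat\sigma_R,\ \Delta_{\bar D}\mathcal{H}_{\mathcal{J}}(\sigma)\leq 0\}$ recurs with positive probability (using only recurrence of $\hat\sigma_R$, not fixation), and then applies Lemma \ref{nuovoabs} to the event $B=\{\mathcal{S}^+_{\bar D,1}\cup\mathcal{S}^0_{\bar D,1}\neq\emptyset\}$; the uniform lower bound $\tfrac12(1-e^{-1})e^{-K|R'\cup\partial^{ext}R'|}$ is obtained by forcing the clock at $\bar D$ to ring while all other clocks touching $R'\cup\partial^{ext}R'$ stay silent, so that the configuration is still in $A_{\bar D}$ at the ring. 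You instead aggregate: on the fixation event the total acceptance intensity over all $D\subset R'$ is $\geq 1/2$ for all large $t$, the compensator of the corresponding counting process diverges, and a divergent compensator forces infinitely many jumps a.s.; you then pigeonhole at the end, using the fixation of $R$ to push cofinitely many of these flips onto sets $D\subset R'\setminus R$. What your version buys is that it avoids both Lemma \ref{nuovoabs} and the ``no other clocks ring'' device, and it localizes the infinitely flipping site in $R'\setminus R$; what it costs is reliance on the (standard, but not proved in the paper) equivalence $\{N_\infty<\infty\}=\{A_\infty<\infty\}$ for counting processes, applied on the non-adapted event $\Omega_0$ with a non-stopping random time $t_1$ -- harmless here because that equivalence is an almost sure identity of events, but worth a sentence. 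Two minor points you should make explicit: fixation of $\sigma_u(\cdot)\in\{-1,+1\}$ is equivalent to $|\mathcal{N}_u|<\infty$, which is what turns ``$R$ fixates'' into ``only finitely many flips meet $R$''; and if $R'\subset R$ your argument degenerates into showing that (i) and (ii) cannot hold simultaneously, so the statement is vacuous in that corner case rather than proved -- the same implicit assumption $R'\not\subset R$ underlies the paper's proof as well.
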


\begin{proof}
First of all, we notice that there exists $\mathcal{J} \in
\{-1,+1\}^E$ coinciding with $ \hat {\mathcal{J}}_S $ on $S$ with
positive probability, since $S$ is finite and $\alpha \in (0,1)$.

By \eqref{thm4new1} and by  ergodicity one has  that
$\rho_{\mathcal{F}}>0$. So, it is sufficient to prove that
$\rho_{\mathcal{I}}>0$.


In the following we use the probability measure $P(\cdot ) =
\mathbb{P}(\cdot |   \bigcap_{e \in S} \{ J_e = \hat J_e\} ) $.

 We define the set
$$
\hat A=\{\sigma \in \{-1,+1\}^V : \sigma_u =\hat{\sigma}_u,
\forall\,u \in R\}.
$$
By definition of $\hat \sigma_R$, we know that $\hat A$ recurs with
positive probability. The set $D$ in the statement of the theorem
depends on $\sigma$. We write
$$
 A_D = \{\sigma \in \{-1,+1\}^V : \sigma_u =\hat{\sigma}_u, \forall\,u
\in R ,\quad   \Delta_D \mathcal{H}_{\mathcal{J}} (\sigma ) \leq 0
\},
$$
for $ D \in \mathcal{A}_k  $ such that $D \subset R'$. Thus, $\hat A
= \bigcup_{D \in \mathcal{A}_k : D \subset R' }   A_D$ is  a finite
union. Therefore, one can select a $\bar D \in \mathcal{A}_k $ with
$ \bar D \subset R' $ such that $A_{\bar D} $ recurs with positive
probability.

Now, the set $B$, in the frame of Lemma \ref{nuovoabs},
is taken as
$$
B=\{\mathcal{S}^+_{ \bar D ,1} \cup\mathcal{S}^0_{ \bar D,1} \neq
\emptyset \} .
$$

 Then
$$
\inf\limits_{\sigma \in A_{\bar D}} \inf\limits_{t \geq 0} P(
(\sigma(t+s): s \in [0,1])    \in B | \sigma(t)=\sigma)
$$
\begin{equation} \label{bobo}
\geq \frac{1}{2} P( \{  \mathcal{T}_{\bar D} \cap  [t, t +1] \neq
\emptyset \}\cap \{  \mathcal{T}_{A} \cap [t, t +1] = \emptyset :
\forall \, A  \in \mathcal{A}_k \text{ with } A\neq \bar D,  A \cap
(R' \cup
\partial^{ext} R ')\neq \emptyset \} ),
\end{equation}
where $1/2$, in the previous formula, is due to $\Delta_{\bar D}
\mathcal{H}_{\mathcal{J}} (\sigma ) \leq 0 $.

By the independence of the Poisson processes, \eqref{bobo} is larger
than  or equal to
$$
 \frac{1}{2}  (1-e^{-1}) e^{-K |R'\cup
\partial^{ext} R'  |} >0,
$$
where $K$ is given in \eqref{kappa}.

 Lemma \ref{nuovoabs}
guarantees that $B$ recurs with positive probability. Thus, the
ergodic theorem assures that $\rho_{\mathcal{I}}>0$.
\end{proof}

\begin{example}\label{commento}
In Figure \ref{fig1} we provide an example. It is here represented a
region of the $d$-graph $G$, with $d=2$. For the notation, see
Theorem \ref{unione}.
  Consider $k=1$, $\alpha \in (0,1)$, $\gamma \in (0,1]$
(resp. $\gamma \in [0,1)$) and $T$ fast decreasing to zero. Set the
black bullets as sites with spin $+1$ (resp. $-1$) and the other
spins can be arbitrarily selected.
  The set $R$ is formed by the black bullets.
The set $S$ is given by all the edges connecting a black bullet with
another black bullet or with the white one. The continuous line
segment represents an edge $e$ with $J_e=+1$, while the dotted line
segment is associated to $J_e=-1$. With positive probability the
spins over $R$ do not flip, and remain constant at $+1$ (resp. $-1$)
if initially taken positive (resp. negative) (see \eqref{fa} in
Lemma \ref{serve}). Assume that this is the case.  $R'$ is formed by
the cental white bullet. So $D = R'$ and $\Delta_D
\mathcal{H}_{\mathcal{J}} (\sigma) =0$. Hence,
$\rho_{\mathcal{I}}>0$. Notice that the only interactions which
really matter are those in $S$, while the other ones can be
arbitrarily selected.
\end{example}

\begin{figure}
  \includegraphics[scale=0.4]{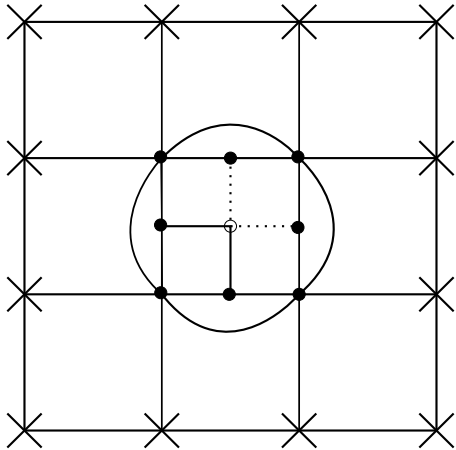}
  \caption{\small{Example of $2$-graph in which Theorem \ref{unione}
  can be applied. Such a graph can be also used as an example of
  Theorem \ref{cultb}.
   }}
  \label{fig1}
\end{figure}

We finish the section with a particular case in which we show that
the parameter $k$ plays a role to establish the class of a $(k,
\alpha, \gamma;T)$-model. It is known that the $(1, \alpha,
\gamma;T)$-model on the hexagonal lattice is of type $\mathcal{F}$
when $T\equiv 0$ (see \cite{NNS00}) and in the next section this
result is proven when $T$ is fast decreasing to zero (see Theorem
\ref{culta}). It is important to notice that the hexagonal lattice
in our framework is a $2$-graph.

In the following result we show that a $(k, 1, 1/2;T)$-model with $k
\geq 2 $ cannot belong to $\mathcal{F}$.  The question if it is of
kind $\mathcal{M}$ or $\mathcal{I}$ remains open.

\begin{theorem}\label{hexag}
The $(k, 1, 1/2;T)$-model on the hexagonal $2$-graph $G_H=(V_H,E_H)$
with $k \geq 2$ is not of type $\mathcal{F}$, for any temperature
profile $T$.
\end{theorem}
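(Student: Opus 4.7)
I will apply Theorem \ref{unione} to conclude that the model is of type $\mathcal{M}$, and in particular not of type $\mathcal{F}$. Since $\alpha=1$, all interactions equal $+1$ almost surely, so the conditioning in \eqref{thm4new1} is on a full-measure event, and the auxiliary $\hat{\mathcal{J}}_S$ plays no active role: one may simply take $\hat{J}_e=+1$ for all $e$ in any chosen finite $S$.

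Fix adjacent vertices $u,v\in V_H$, and let $u_1,u_2$ (resp.\ $v_1,v_2$) be the two other neighbors of $u$ (resp.\ $v$). Since $k\ge 2$, the set $R':=\{u,v\}$ belongs to $\mathcal{A}_k$. Let $R$ be a finite set containing $\{u_1,u_2,v_1,v_2\}$ (to be enlarged below), and define a target configuration $\hat\sigma$ on $R$ with $\hat\sigma_{u_1}=\hat\sigma_{v_1}=+1$ and $\hat\sigma_{u_2}=\hat\sigma_{v_2}=-1$. Condition (ii) of Theorem \ref{unione} is then immediate with $D=R'$: for any $\sigma$ extending $\hat\sigma|_R$ one computes
\[
\Delta_{\{u,v\}}\mathcal{H}_{\mathcal{J}}(\sigma)=2\sigma_u(\hat\sigma_{u_1}+\hat\sigma_{u_2})+2\sigma_v(\hat\sigma_{v_1}+\hat\sigma_{v_2})=0.
\]
Hence the pair $\{u,v\}$ is an indifferent flip under any extension.

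The substantive content is to verify condition (i), namely that the spins in $R$ fixate to $\hat\sigma|_R$ with positive probability. The plan is to enlarge $R$ into a larger finite set by placing, around each of $u_1,v_1$, a shell of $+1$'s of depth $L$, and around each of $u_2,v_2$ a shell of $-1$'s of depth $L$, organized so as to exploit the bipartite nature of the hexagonal lattice. For $L$ large enough, the extended target $\hat\sigma|_R$ becomes a \emph{strict local minimum of} $\mathcal{H}_{\mathcal{J}}$ \emph{on} $R$: every connected flip $A\in\mathcal{A}_k$ supported in $R$ raises the energy. On the positive-probability event $\{\sigma(0)|_R=\hat\sigma|_R\}$ (of probability $2^{-|R|}$), when $T$ is fast decreasing to zero Lemma \ref{serve} implies that energy-increasing flips at any site of $R$ occur only finitely often almost surely. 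Combined with strict local-minimality of $\hat\sigma|_R$, this would force the spins on $R$ to fixate at $\hat\sigma|_R$ with positive probability. Theorem \ref{unione} then delivers $\rho_{\mathcal{I}}>0$, i.e., type $\mathcal{M}$.

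The main obstacle is establishing strict local-minimality of $\hat\sigma|_R$ against connected pair flips $\{x,y\}$ that straddle $\partial R$, with $x\in R$ and $y\notin R$: the neighbors of $y$ lie outside $R$ and their spins are arbitrary, so in the worst case they could drive $\Delta_{\{x,y\}}\mathcal{H}_{\mathcal{J}}$ negative. The shell construction must therefore be arranged so that the favorable same-sign contribution from the neighbors of $x$ inside $R$ always dominates the worst possible contribution from the two neighbors of $y$ outside $R$; the degree being $3$, together with the bipartition and the finiteness of $k$, is what makes this feasible. For temperature profiles $T$ not fast decreasing to zero (so that Lemma \ref{serve} is unavailable), one instead uses Corollary \ref{coro} when $\liminf_{t\to\infty}T(t)\ln t>4d_{G_H}=12$, or more generally argues directly that the high-temperature regime forces infinitely many flips at every vertex, again yielding that the model is not of type $\mathcal{F}$.
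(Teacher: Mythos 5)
Your proposal contains a genuine gap, and in fact it attempts to prove strictly more than the theorem asserts: you aim to verify the hypotheses of Theorem \ref{unione} and conclude the model is of type $\mathcal{M}$, whereas the paper explicitly leaves open whether this model is $\mathcal{M}$ or $\mathcal{I}$ and proves only that it is not $\mathcal{F}$. The unproven step is exactly condition (i) of Theorem \ref{unione}, i.e.\ that the spins on $R$ fixate at $\hat\sigma_R$ with positive probability, and your shell construction cannot deliver the claimed strict local minimality. On the hexagonal lattice every vertex has degree $3$, so a connected flip set $A\in\mathcal{A}_k$ with $A\cap R=\{x\}$ gains at most two favorable edges inside $R$ (contribution $+4$ to $\Delta_A\mathcal{H}_{\mathcal{J}}$), while the edges from $A\setminus\{x\}$ to the exterior are at the mercy of the dynamic, uncontrolled outside spins: already for $A=\{x,y\}$ with $y\notin R$ the worst case gives $\Delta_{\{x,y\}}\mathcal{H}_{\mathcal{J}}=4-4=0$, an indifferent flip accepted with probability $1/2$ at any temperature, and for a path $A=\{x,y,z\}$ with $y,z\notin R$ there are three exterior edges, so $\Delta_A\mathcal{H}_{\mathcal{J}}\leq 4-6<0$, a flip \emph{in favour} of the Hamiltonian. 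Lemma \ref{serve} controls only flips in opposition ($\mathcal{N}^-_x$, i.e.\ $\Delta>0$); indifferent and favorable flips touching $R$ are not excluded by anything in the paper, so fixation of $R$ does not follow. This is precisely why Theorem \ref{cultb} demands the strict inequality \eqref{grande2}, which fails here: with degree $3$ and $k\geq 2$ the interior edges of a straddling flip can never strictly dominate the worst-case exterior ones. Note also that Theorem \ref{unione} formally requires $\alpha\in(0,1)$ and $T$ fast decreasing to zero, neither of which you have.

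The second gap concerns temperature coverage: the statement is for \emph{every} profile $T$, and your case split is not exhaustive. Fast decrease is guaranteed by Theorem \ref{opposition} only when $\limsup_{t\to\infty}T(t)\ln t<4$ (the relaxed bound $8$ in the Remark applies to sites of even degree, not to the hexagonal lattice where $d_x=3$), while Corollary \ref{coro} needs $\liminf_{t\to\infty}T(t)\ln t>4d_{G_H}=12$; a profile with, say, $T(t)\ln t\to 8$ falls in neither regime, and the phrase ``argue directly that the high-temperature regime forces infinitely many flips'' is not an argument. The paper's proof sidesteps both problems by contradiction: assuming type $\mathcal{F}$, ergodicity with $\gamma=1/2$ forces a nonempty domain wall in the limit configuration with positive density of $\pi/3$ or $2\pi/3$ corners; each such corner supplies a recurrent local pattern admitting a favorable single-site flip (for $\pi/3$), or an indifferent pair flip (for $2\pi/3$, where $k\geq 2$ enters), accepted with probability at least $1/2$ \emph{uniformly in $T$}, so Lemma \ref{nuovoabs} yields infinitely many flips and contradicts fixation. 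No positive-probability fixation statement and no control of the temperature profile is ever needed there, which is exactly what your route is missing.
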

\begin{proof}\label{proofhexag}

By contradiction suppose that the model is of type $\mathcal{F}$.
Therefore there exists, for any $x \in V_H$,
$$
\sigma_x (\infty ) =\lim_{t \to \infty } \sigma_x (t ) \,\,\,\,\,
a.s.
$$
We denote with $ \sigma (\infty ) = ( \sigma_x (\infty ) : x \in
V_H)$ the random limit configuration. By ergodicity and since $\gamma
=1/2$  
we obtain that
\begin{equation}\label{limiteesag}
    \lim_{\ell \to \infty} \frac{|  \{x\in B_\ell (v): \sigma_x (\infty ) =+1
\} |}{ |B_\ell (v)| } = \frac{1}{2}  \,\,\,\,\, a.s.
\end{equation}
for any $v \in V_H$.

As in \cite{NNS00} we define the domain wall $\mathcal{D}^H$ as a
subset of the hexagonal dual lattice, which separates the positive
spins from the negative ones by joining the center of the hexagons
(see Figure \ref{fighexag}). By \eqref{limiteesag} $\mathcal{D}^H
\neq \emptyset $  almost surely. The dual lattice is formed by three
classes of edges: the vertical edges, the ascendent ones and the
descendent ones (see Figure \ref{fighexag}). As in
\eqref{limiteesag}, there exists the density $\rho_{ver}$
($\rho_{asc}$, $\rho_{des}$, resp.) of the vertical (ascendent,
descendent, resp.) edges of the domain wall. By symmetry and being
$\mathcal{D}^H \neq \emptyset$, we have that
$\rho_{ver}=\rho_{asc}=\rho_{des}>0$ almost surely.

\begin{figure}[!h]
  \includegraphics[scale=0.5]{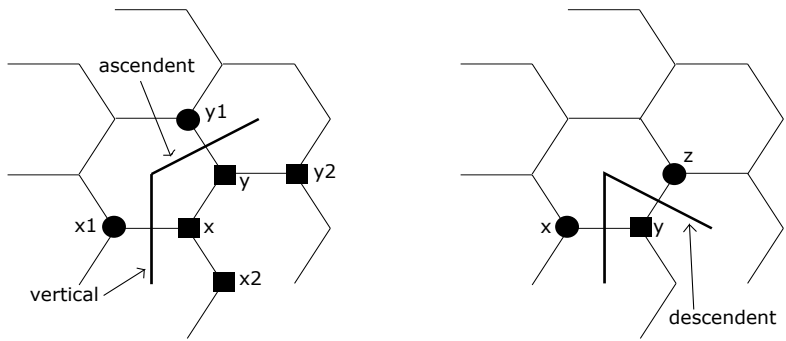}
  \caption{\small{
  The hexagonal lattice with the domain wall $\mathcal{D}^H$ in
  bold. The squares and the bullets represents spins with opposite
  signs in the two situations of $2\pi/3$ (left panel) and $\pi/3$ (right panel) angles.
 }}
  \label{fighexag}
\end{figure}

On each site $v \in V_H$ with $\sigma_v=-1$ we consider the triangle
$\Phi_{v,H}$ connecting the centers of the three hexagons containing
$v$. Such triangle is a closed convex set, in that it is given by
its internal part and its boundary. We introduce the set containing
all such triangles
$$
\Phi_{H}=\bigcup_{v\in V_H:\sigma_v=-1}\Phi_{v,H}.
$$
The set $\mathcal{D}^H$ is the boundary of the set $\Phi_{H}$.
Simplicial homology arguments (see \cite{spanier}) gives that the
domain wall $\mathcal{D}^H$ can be decomposed in connected
components, each of them belonging to one of the following families:
\begin{itemize}
\item \emph{Closed component:} the component is given by a sequence of
edges  forming a cycle.
\item \emph{Bidirectional infinite component:} the component is
not closed and each edge belonging to the component has two adjacent
edges; moreover, such adjacent edges are disjoint.
\end{itemize}

Two adjacent edges of the set $\mathcal{D}^H$ may form an angle of
$\pi$, $2\pi/3$ and $\pi/3$.

If there exists a closed component, then there exists at least one
couple of adjacent edges forming an angle different from $\pi$. In
this case, by ergodicity, the sum of the densities of angles
$2\pi/3$ and $\pi/3$ is different from zero.

If all the components are of type bidirectional infinite, then we
have two subcases. One of the components has an angle different from
$\pi$ or, otherwise, the components are  straight lines (for
example, with all vertical edges). In the case of components with an
angle different from $\pi$, ergodicity leads that, as in the case of
existence of a closed component, the sum of the densities of angles
$2\pi/3$ and $\pi/3$ is different from zero. The straight line case
has zero probability to occur. In fact, ergodicity and symmetry
guarantee the existence of another component of $\mathcal{D}^H$
which is a straight line as well and it is not parallel to the
vertical one. The intersection of the two components forms an angle
different from $\pi$, and we fall in the previous case. However, we
notice that the coexistence of two nonparallel straight lines is
absurd, in that one of the  spins at the four corners formed by the
intersection of the components should have simultaneously value $+1$
and $-1$.

 If the density of the angles $\pi/3$ is different from zero
(see Figure \ref{fighexag}, right panel), then we have two edges
$\{x,y\}, \{y,z\} \in E_H$ such that the random time
$$T_{\{x,y,z\}}=\inf \{ t \in \mathbb{R}_+ : \forall t' \geq t,
\,\sigma_x (t') = \sigma_z (t')=-\sigma_y (t') \}$$ is finite a.s..
However, there are infinite arrivals of the Poisson
$\mathcal{P}_{\{y\}}$ in the random interval $(T_{\{x,y,z\}},
\infty)$ and at each arrival the site $y$ flips with a probability
at least $1/2$ (the value of such a probability depends on the
temperature at the arrival time, and it is one in the case of
zero-temperature). By Lemma \ref{nuovoabs},  
we have a
contradiction and the $(k, 1, 1/2;T)$-model is of type $\mathcal{M}$
or $\mathcal{I}$.

If the density of the angles $\pi/3$ is zero,  then the density of
the angles of $2\pi/3$ is different from zero (see Figure
\ref{fighexag}, left panel). In this case there exists an edge
$\{x,y\} \in E_H$ and its four adjacent edges $\{x_1,x\}, \{x_2,x\},
\{y_1,y\}, \{y_2,y\} \in E_H$ such that the random time

$$
T_{\{x,y,x_1,x_2,y_1,y_2\}}=$$$$=\inf \{ t \in \mathbb{R}_+ : \forall t'
\geq t, \,\sigma_{x_1} (t') = \sigma_{y_1} (t') = -\sigma_{x_2} (t')
= -\sigma_{y_2} (t') \text{ and } \sigma_{x} (t') = \sigma_{y} (t')
\}
$$
is finite a.s. Analogously to the previous case, there are infinite
arrivals of the Poisson $\mathcal{P}_{\{x,y\}}$ in
$(T_{\{x,y,x_1,x_2,y_1,y_2\}}, \infty)$ and at each arrival the set
$\{x,y\}$ flips with a probability equals to $1/2$.  Lemma
\ref{nuovoabs} leads to a contradiction, therefore the $(k, 1,
1/2;T)$-model cannot be of type $\mathcal{F}$.
\end{proof}

%


\section{Conditions for $\rho_{\mathcal{F}}>0$ }
\label{rhoFno0}


In this section we give some sufficient conditions to obtain
$\rho_{\mathcal{F}}>0$. 
Let us consider a $d$-graph $G=(V, E)$. For a finite set $\Lambda
\subset V$, we define
$$
E(\Lambda)=\{\{u,v \} \in E : u,v \in \Lambda\}      .
$$
For  $\mathcal{J} \in \{-1,+1\}^E$, $\sigma \in  \{-1,+1\}^V$, we
also set
 \begin{equation} \label{finhami2}
\mathcal{H}_{\Lambda, \mathcal{J}}(\sigma)  =
 -\sum_{e=\{x,y\} \in E( \Lambda )  } J_{e} \sigma_x  \sigma_y
\end{equation}
and 
\begin{equation} \label{densita}
D_{\Lambda, \mathcal{J}}(\sigma) =\frac{ \mathcal{H}_{\Lambda,
\mathcal{J}}(\sigma)}{ |\Lambda |} .
\end{equation}

%

We are now ready to present the following
\begin{lemma}\label{Lemrichiesto}
Let us consider a $d$-graph $G=(V,E)$.     
For any $\mathcal{J } \in \{-1,+1\}^E$, $ \sigma  \in \{-1,+1\}^V$
and any finite set $\Lambda \subset V$ one has
\begin{equation*}\label{ddi}
 D_{\Lambda,
\mathcal{J}}(\sigma )  \in [-d_G/2 , d_G/2 ],
\end{equation*}
where $d_G$ is the maximal degree of the graph $G$.
\end{lemma}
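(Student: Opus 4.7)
The plan is to bound $|\mathcal{H}_{\Lambda,\mathcal{J}}(\sigma)|$ by the number of edges in the induced subgraph and then use the handshaking lemma to relate this number to $|\Lambda|$ and $d_G$.

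First I would observe that each summand $J_e \sigma_x \sigma_y$ in the definition \eqref{finhami2} is a product of three $\pm 1$ quantities and therefore lies in $\{-1,+1\}$. By the triangle inequality this gives the crude bound
\begin{equation*}
|\mathcal{H}_{\Lambda,\mathcal{J}}(\sigma)| \leq |E(\Lambda)|.
\end{equation*}

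Next I would bound $|E(\Lambda)|$. Every edge $e=\{u,v\}\in E(\Lambda)$ has both endpoints in $\Lambda$, so it contributes exactly $2$ to the total count $\sum_{v\in \Lambda}|\{u\in\Lambda:\{u,v\}\in E\}|$. Since the number of neighbours of $v$ in $\Lambda$ is at most its full degree $d_v$, and $d_v\leq d_G$ by definition of the maximal degree, the handshaking lemma yields
\begin{equation*}
2|E(\Lambda)| = \sum_{v\in \Lambda}|\{u\in\Lambda:\{u,v\}\in E\}| \leq \sum_{v\in\Lambda} d_v \leq d_G\,|\Lambda|.
\end{equation*}
Combining the two bounds gives $|\mathcal{H}_{\Lambda,\mathcal{J}}(\sigma)|\leq d_G|\Lambda|/2$, and dividing by $|\Lambda|$ in \eqref{densita} produces the claim. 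There is no substantive obstacle here; the statement is really just the observation that any bond-counting Hamiltonian with $\pm 1$ bonds has energy density bounded by half the maximal degree, and everything reduces to handshaking on the induced subgraph.
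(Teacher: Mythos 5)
Your proof is correct and is essentially the paper's argument: the paper writes $\mathcal{H}_{\Lambda,\mathcal{J}}(\sigma)=-\tfrac12\sum_{x\in\Lambda}\bigl[\sum_{\{x,y\}\in E(\Lambda)}J_{\{x,y\}}\sigma_x\sigma_y\bigr]$ and bounds the bracket by $d_G$ in absolute value, which is exactly your handshaking double count of $E(\Lambda)$ combined with the bound of each summand by $1$. The only difference is presentational (you bound the sum term by term and then count edges, the paper groups the edge sum by vertex first), so there is nothing substantive to add.
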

\begin{proof}
 We rewrite the expression in \eqref{finhami2} as
 $$
\mathcal{H}_{   \Lambda  , \mathcal{J}}(\sigma) = -
\frac{1}{2}\sum_{x \in \Lambda} \left [ \sum_{ \{x, y\} \in E(
\Lambda )  } J_{\{x, y\}} \sigma_x \sigma_y       \right ] .
 $$
The term inside the square brackets belongs to $ [-d_G , d_G] $. In
fact, for any $x \in \Lambda$, by definition we have $d_x \leq d_G$
and $J_{\{x, y\}}, \, \sigma_x, \, \sigma_y \in \{ -1, +1\}$. Thus
$$
\mathcal{H}_{\Lambda   , \mathcal{J}}(\sigma) \in \left [
 - \frac{d_G |\Lambda |}{2} , \frac{d_G |\Lambda|}{2} \right  ] .
$$
By \eqref{densita}, we have the thesis.
%
\end{proof}%

We give two more definitions that will be used in the proof of the
next lemma. They are based on the embeddedness of the $d$-graph and
on the concept of cell. For a $d$-graph $G=(V,E)$ and $\ell \in
\mathbb{N}$ we define
\begin{equation*}
\label{RegA-Lemma0} \mathcal{Z}_\ell=\left\{A\in \mathcal{A}_k : A
\cap [-\ell,\ell)^d \neq \emptyset \text{ and } (A \cup
\partial^{ext} A ) \not \subset [-\ell,\ell)^d
\right\}
\end{equation*}
and
\begin{equation*}
\label{RegA-Lemma+} \mathcal{Z}_\ell^+=\left\{A\in \mathcal{A}_k :
(A \cup
\partial^{ext} A) \subset [-\ell,\ell)^d 
\right\}.
\end{equation*}
Thus $\mathcal{Z}_\ell \cap \mathcal{Z}_\ell^+= \emptyset$.
By definition, there exists a constant $ M_{k,G}>0 $, depending on
$k $ and the graph $G$, such that  $ |\mathcal{Z}_\ell | \leq
M_{k,G} \ell^{d-1} $ for any $\ell \in \mathbb{N}$. Analogously,
there exists a constant $ M^+_{k,G}>0 $ such that $
|\mathcal{Z}^+_\ell | \leq M^+_{k,G} \ell^{d} $ for any $\ell \in
\mathbb{N}$.





\begin{lemma}\label{Lemnonrichiesto}
Let us consider a $(k, \alpha,\gamma; T)$-model on a $d$-graph
$G=(V,E)$ with $k \in \mathbb{N}$, $\alpha \in [0,1]$, $\gamma \in
[0,1]$ and temperature profile $T$ fast decreasing to zero. Then
$\mathbb{E}[ |\mathcal{S}^+_{A,\infty}|]<\infty $ for any $A \in
\mathcal{A}_k$.
\end{lemma}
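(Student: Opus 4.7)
The plan is a conservation-of-energy argument on a growing box, combined with translation invariance to extract a pointwise bound. By Lemma \ref{serve}, $\mathbb{E}[|\mathcal{N}^-_x|] < \infty$ for every $x \in V$. Since the $d$-graph is $\mathbb{Z}^d$-periodic, the laws $\mu_{\mathcal{J}}, \nu_{\sigma(0)}$ are translation invariant, and $Cell$ contains only finitely many vertices, the quantity $\mathbb{E}[|\mathcal{N}^-_x|]$ takes only finitely many values as $x$ ranges over $V$, so $C := \sup_{x \in V} \mathbb{E}[|\mathcal{N}^-_x|] < \infty$. In particular $\mathbb{E}[|\mathcal{S}^-_{A',\infty}|] \leq C$ for every $A' \in \mathcal{A}_k$. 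The idea is now to bootstrap this bound on opposition-type flips into one on favour-type flips via the boundedness of the restricted Hamiltonian $\mathcal{H}_{V_\ell, \mathcal{J}}$ provided by Lemma \ref{Lemrichiesto}.

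Fix $T \in (0,\infty)$ and $\ell \in \mathbb{N}$ large enough that $A \cup \partial^{ext}A \subset V_\ell := V \cap [-\ell,\ell)^d$. Telescope $\mathcal{H}_{V_\ell,\mathcal{J}}(\sigma(T)) - \mathcal{H}_{V_\ell,\mathcal{J}}(\sigma(0))$ over the (a.s.\ countable) set of flips in $[0,T]$: for a flip at $A' \in \mathcal{Z}^+_\ell$, every edge of $E$ with exactly one endpoint in $A'$ already lies in $E(V_\ell)$, so the flip changes $\mathcal{H}_{V_\ell,\mathcal{J}}$ by exactly $\Delta_{A'}\mathcal{H}_{\mathcal{J}}(\sigma(s^-))$. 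This contribution is at most $-2$ for flips in $\mathcal{S}^+_{A',T}$, lies in $[2,M]$ for flips in $\mathcal{S}^-_{A',T}$, and equals $0$ for flips in $\mathcal{S}^0_{A',T}$, where $M := 2kd_G$. For a flip at $A' \in \mathcal{Z}_\ell$, the change is bounded by $M$ in absolute value. Combining with the a priori estimate $|\mathcal{H}_{V_\ell,\mathcal{J}}(\cdot)| \leq d_G |V_\ell|/2$, rearranging, taking expectations, and using $\mathbb{E}[|\mathcal{S}^-_{A',T}|] \leq C$ together with $\mathbb{E}[|\mathcal{T}_{A'} \cap [0,T]|] = T$ yields
\begin{equation*}
2 \sum_{A' \in \mathcal{Z}^+_\ell} \mathbb{E}[|\mathcal{S}^+_{A',T}|] \leq d_G |V_\ell| + MC\,|\mathcal{Z}^+_\ell| + MT\,|\mathcal{Z}_\ell|.
\end{equation*}

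Translation invariance of the joint law (Bernoulli interactions, Bernoulli initial data, i.i.d.\ Poisson processes) gives $\mathbb{E}[|\mathcal{S}^+_{A+z,T}|] = \mathbb{E}[|\mathcal{S}^+_{A,T}|]$ for every $z \in \mathbb{Z}^d$, and there exists $c_A > 0$ such that $|\{z \in \mathbb{Z}^d : A+z \in \mathcal{Z}^+_\ell\}| \geq c_A \ell^d$ for all large $\ell$. Hence $c_A \ell^d \cdot \mathbb{E}[|\mathcal{S}^+_{A,T}|]$ bounds the left-hand side of the previous display from below. Using $|V_\ell| = O(\ell^d)$, $|\mathcal{Z}^+_\ell| \leq M^+_{k,G}\,\ell^d$, and, crucially, $|\mathcal{Z}_\ell| \leq M_{k,G}\,\ell^{d-1}$, division by $\ell^d$ and the passage $\ell \to \infty$ at fixed $T$ kills the boundary term $O(T/\ell)$, producing $\mathbb{E}[|\mathcal{S}^+_{A,T}|] \leq C'$ for a constant $C'$ independent of $T$; monotone convergence as $T \to \infty$ then delivers the claim. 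The main subtlety is the order of limits: the boundary contribution grows linearly in $T$, so one cannot send $T \to \infty$ first, and the argument hinges on the fact that $|\mathcal{Z}_\ell|$ is of strictly lower volume order than $|\mathcal{Z}^+_\ell|$, together with translation invariance to convert the aggregated inequality into the desired pointwise bound on $\mathbb{E}[|\mathcal{S}^+_{A,T}|]$.
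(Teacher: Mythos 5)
Your proof is correct and follows essentially the same route as the paper's: telescoping the Hamiltonian restricted to $[-\ell,\ell)^d$, bounding the opposition flips via Lemma \ref{serve}, the boundary sets via $|\mathcal{Z}_\ell| = O(\ell^{d-1})$ against $|\mathcal{Z}_\ell^+| = O(\ell^{d})$, and using translation invariance together with the density bound of Lemma \ref{Lemrichiesto} to extract a uniform bound on $\mathbb{E}[|\mathcal{S}^+_{A,T}|]$. The only difference is organizational: the paper argues by contradiction with the coupled choice $t=\ell^{1/2}$, whereas you fix $T$, send $\ell \to \infty$ to kill the $O(T\ell^{d-1})$ boundary term, and then invoke monotone convergence in $T$ --- a slightly cleaner ordering of the limits that changes nothing essential.
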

\begin{proof}
Let $\ell \in \mathbb{N}$. We denote the quantity $\mathcal{H}_{ V
\cap [-\ell,\ell)^d , \mathcal{J}}( \cdot)$ by $\mathcal{H}_{\ell ,
\mathcal{J}}( \cdot) $.


We claim that there exists the following  bound for $\mathcal{H}_{
\ell , \mathcal{J}}(\sigma (t ))$
\begin{equation}
\label{disuH} \mathcal{H}_{\ell  , \mathcal{J}}(\sigma (t )) -
\mathcal{H}_{\ell , \mathcal{J}}(\sigma (0 )) \leq \sum_{A\in
\mathcal{A}_k : A \cap [-\ell,\ell)^d  \neq \emptyset }
kd_G|\mathcal{S}^-_{A,t}|+\sum_{A\in \mathcal{Z}_\ell }
kd_G|\mathcal{S}^-_{A,t} \cup \mathcal{S}^0_{A,t} \cup
\mathcal{S}^+_{A,t}|-2 \sum_{A\in \mathcal{Z}_\ell^+}
|\mathcal{S}^+_{A,t}|,
\end{equation}
where $kd_G$ represents an upper bound for $\Delta_A \mathcal{H}_{
\mathcal{J}    }         (\sigma )$, for each $\sigma \in \{-1,
+1\}^V$ and $A \in \mathcal{A}_k$ while $-2$ is a lower bound for
the case in favour to the Hamiltonian, and comes out from
\eqref{DeltaHonA}.

Notice that the sets $A\in \mathcal{A}_k$ such that $A \cap
[-\ell,\ell)^d = \emptyset$ do not appear in \eqref{disuH} because,
in this case, $\mathcal{H}_{\ell ,
\mathcal{J}}(\sigma)=\mathcal{H}_{ \ell ,
\mathcal{J}}(\sigma^{(A)})$.

We check \eqref{disuH} by recursion.

Now, define
$$
\bar s =\inf\{s \in \mathcal{S}^-_{A,t} \cup \mathcal{S}^0_{A,t}
\cup \mathcal{S}^+_{A,t} : A \in \mathcal{A}_k \text{ and } A\cap
[-\ell, \ell)^d\neq \emptyset\}.
$$
By the finiteness of the set of all the $A\in \mathcal{A}_k$
intersecting $[-\ell, \ell)^d$, then there exists a unique (a.s.)
$A_1 \in \mathcal{A}_k$ with $A_1 \cap [-\ell, \ell)^d\neq
\emptyset$ such that
$$
\bar s \in \mathcal{S}^-_{A_1,t} \cup \mathcal{S}^0_{A_1,t} \cup
\mathcal{S}^+_{A_1,t}.
$$
Then
$$
\mathcal{H}_{ \ell , \mathcal{J}}(\sigma^{(A_1)} (0)) -
\mathcal{H}_{\ell  , \mathcal{J}}(\sigma (0))  $$
\begin{equation*}
\label{disuH-1} \leq kd_G \cdot \left[ \mathbf{1}_{\{A_1\in
\mathcal{A}_k : A_1 \cap [-\ell,\ell)^d  \neq \emptyset \} } \cdot
\mathbf{1}_{    \{  \bar s    \in
\mathcal{S}^-_{A_1,t}\}}+\mathbf{1}_{\{A_1\in \mathcal{Z}_\ell \}}
\right]-2\cdot \mathbf{1}_{\{A_1\in \mathcal{Z}_\ell^+\}} \cdot
\mathbf{1}_{\{ \bar s   \in \mathcal{S}^+_{A_1,t}\}}.
\end{equation*}
Thus, by induction  one obtains  \eqref{disuH}.

By Lemma \ref{Lemrichiesto} one has
\begin{equation}
\label{disuH-4} \mathbb{E}[\mathcal{D}_{V \cap  [-\ell,\ell)^d   ,
\mathcal{J}}(\sigma (t ))] \in [-d_G/2,d_G/2].
\end{equation}
By applying the expected value operator to the terms in inequality
\eqref{disuH} and by \eqref{disuH-4}, one has
$$
\mathbb{E}[\mathcal{H}_{ \ell  , \mathcal{J}}(\sigma (t ))] \leq
\frac{1}{2} d_G |V_{Cell}| (2\ell)^d
$$
\begin{equation}
\label{disuH-3}+ \mathbb{E}\left[\sum_{A\in \mathcal{A}_k : A \cap
[-\ell,\ell)^d \neq \emptyset }
d_Gk|\mathcal{S}^-_{A,t}|\right]+\mathbb{E}\left[\sum_{A\in
\mathcal{Z}_\ell } d_Gk|\mathcal{T}_{A}\cap [0,t]|\right]-2\cdot
\mathbb{E}\left[ \sum_{A\in \mathcal{Z}_\ell^+}
|\mathcal{S}^+_{A,t}|\right] ,
\end{equation}
since $\mathcal{S}^-_{A,t} \cup \mathcal{S}^0_{A,t} \cup
\mathcal{S}^+_{A,t} \subset \mathcal{T}_A \cap [0,t]$.

Now, we bound the single terms of inequality \eqref{disuH-3}.

By \eqref{val1} in Lemma \ref{serve}, one has
\begin{equation}
\label{disuH-3aa} \mathbb{E}\left[\sum_{A\in \mathcal{A}_k : A \cap
[-\ell,\ell)^d \neq \emptyset } d_Gk|\mathcal{S}^-_{A,t}|\right]\leq
\mathbb{E}\left[\sum_{A\in \mathcal{A}_k : A \cap [-\ell,\ell)^d
\neq \emptyset } d_Gk|\mathcal{S}^-_{A,\infty}|\right] \leq C_{1,k}
\ell^d ,
\end{equation}
where the constant $C_{1,k}$ can be chosen in such a way that it
does not depend on $\ell $. In fact, by translation invariance, the
expected value over  a cell is equal to the expected value on any
other cell.

By the bound on the cardinality of $ \mathcal{Z}_\ell $ we obtain
\begin{equation}
\label{disuH-3ab} \mathbb{E}\left[\sum_{A\in \mathcal{Z}_\ell }
d_Gk|\mathcal{T}_{A}\cap [0,t]|\right]  \leq d_G k
    | \mathcal{Z}_\ell |   \mathbb{E}\left [
|\mathcal{T}_{A}\cap [0,t]|  \right ]= d_G k t
| \mathcal{Z}_\ell | \leq d_G k t M_{k,G}\ell^{d-1}.
\end{equation}
Now,  by contradiction, suppose that there exists $\bar A \in
\mathcal{A}_k$ such that 
$ \mathbb{E} [ |S^+_{\bar A, \infty}| ] = \infty$. For $\ell$ large
enough, one has
\begin{equation}
\label{disuH-3ac} 2\cdot \mathbb{E}\left[ \sum_{A\in
\mathcal{Z}_\ell^+} |\mathcal{S}^+_{A,t}|\right] \geq \mathbb{E} [
|S^+_{\bar A, t}| ] \ell^d.
\end{equation}
By \eqref{disuH-3}-\eqref{disuH-3ac} and for $\ell$ large enough,
one gets
\begin{equation}
\label{disuH-5}\mathbb{E}[\mathcal{H}_{ \ell  , \mathcal{J}}(\sigma
(t ))] \leq  \frac{1}{2} d_G |V_{Cell}| (2\ell)^d  +C_{1,k} \ell^d +
d_G k t M_{k,G}\ell^{d-1} - \mathbb{E} [ |S^+_{\bar A, t}| ] \ell^d.
\end{equation}

\medskip

The monotone convergence theorem leads to $ \lim_{t \to \infty
}\mathbb{E} [ |S^+_{\bar A, t}| ] = \infty $. Let us take $t =
\ell^{1/2} $. Therefore
$$
\mathbb{E}[ D_{\Lambda, \mathcal{J}} (\sigma(\ell^{1/2} ))] \leq
\frac{1}{2} d_G + \frac{C_{1,k}}{ 2^d |V_{Cell}| } + \frac{d_G k
M_{k,G}}{ 2^d |V_{Cell}| } \ell^{-1/2}  -  \frac{1}{ 2^d |V_{Cell}|
} \mathbb{E} [ |S^+_{\bar A, \ell^{1/2}} |].
$$
If $ \mathbb{E} [ |S^+_{\bar A, \infty} |] = \infty$ the r.h.s. of
the previous inequality should become smaller than $-\frac{1}{2}
d_G$,  for  $\ell $ large. This is in contradiction with Lemma
\ref{Lemrichiesto}, therefore $ \mathbb{E} [ |S^+_{\bar A, \infty}
|] < \infty$.
%
%
%
\end{proof}
We notice  that only in the case of a temperature profile $T$ that
does not satisfy Corollary \ref{coro} one can hope to obtain
$\rho_{\mathcal{F}}>0 $.
In the following we work under the  assumption that the temperature
profile $T$ is fast decreasing to zero and we give, jointly to it,
some sufficient conditions to obtain $ \rho_{\mathcal{F}} >0$.

Next two theorems provide conditions for $\rho_{\mathcal{F}} >0$.
Analogous results are in \cite{GNS, NNS00} for the case $k =1$.


\begin{theorem} \label{culta}
Let us consider a $ (k,  \alpha , \gamma; T)$-model on a $d$-graph
$G= (V, E)$ with $k \in \mathbb{N}$, $\alpha \in
    [0,1]$, $\gamma \in [0,1]$ and $T $ fast decreasing to zero.
 Suppose that
there exists  $v \in V$ such that, for any $A \in \mathcal{A}_k$
with
   $A \ni v $, the cardinality of the set $ \Gamma_A=\{ \{u,w\}\in E : u \in \partial A ,
   w \in \partial^{ext} A \}$   is
   odd.
    Then $\rho_{\mathcal{F}}>0 $. Specifically, $\lim_{t \to \infty} \sigma_v (t) $ exists a.s..
\end{theorem}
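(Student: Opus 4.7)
The plan is to exploit the parity of $|\Gamma_A|$ to rule out indifferent flips at sets containing $v$, and then combine Lemma \ref{serve} (to control flips in opposition of the Hamiltonian) with Lemma \ref{Lemnonrichiesto} (to control flips in favour of the Hamiltonian) to conclude that $v$ flips only finitely often.

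First I would analyse $\Delta_A \mathcal{H}_{\mathcal{J}}(\sigma)$ for $A \in \mathcal{A}_k$ with $A \ni v$. By \eqref{DeltaHonA}, this quantity equals $2 \sum_{e \in \Gamma_A} J_e \sigma_x \sigma_y$, a sum of $|\Gamma_A|$ terms each lying in $\{-1,+1\}$. An odd number of $\pm 1$ summands cannot add to zero, so under the hypothesis that $|\Gamma_A|$ is odd, one obtains $\Delta_A \mathcal{H}_{\mathcal{J}}(\sigma) \neq 0$ for every $\sigma \in \{-1,+1\}^V$ and every $\mathcal{J} \in \{-1,+1\}^E$. Consequently $\mathcal{S}^0_{A,\infty} = \emptyset$ for every $A \in \mathcal{A}_k$ with $A \ni v$, which gives $\mathcal{N}^0_v = \emptyset$.

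Next I would bound the two remaining families of flips. Since $T$ is fast decreasing to zero, \eqref{val1} in Lemma \ref{serve} yields $\mathbb{E}(|\mathcal{N}^-_v|) < \infty$, so $|\mathcal{N}^-_v| < \infty$ almost surely. For the flips in favour of the Hamiltonian, Lemma \ref{Lemnonrichiesto} states that $\mathbb{E}(|\mathcal{S}^+_{A,\infty}|) < \infty$ for each individual $A \in \mathcal{A}_k$; since $K_v = |\{A \in \mathcal{A}_k : A \ni v\}| < \infty$ by the definition of $d$-graph, the union bound
\[
\mathbb{E}(|\mathcal{N}^+_v|) \leq \sum_{A \in \mathcal{A}_k : A \ni v} \mathbb{E}(|\mathcal{S}^+_{A,\infty}|) < \infty
\]
gives $|\mathcal{N}^+_v| < \infty$ almost surely. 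Combining the three bounds, $|\mathcal{N}_v| = |\mathcal{N}^-_v| + |\mathcal{N}^0_v| + |\mathcal{N}^+_v| < \infty$ a.s., so $\sigma_v(t)$ is eventually constant and $\lim_{t \to \infty} \sigma_v(t)$ exists almost surely.

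Finally, to pass from the single fixating site $v$ to the conclusion $\rho_{\mathcal{F}} > 0$, I would invoke the translation invariance of $G$ under the canonical basis vectors $(e_i)_{i=1}^d$: the defining property of $v$ (odd $|\Gamma_A|$ for every $A \in \mathcal{A}_k$ with $A \ni v$) is preserved by the translations, so every vertex in the orbit of $v$ shares this property and hence fixates almost surely. This orbit has strictly positive density inside $B_\ell(v)$ as $\ell \to \infty$, yielding $\rho_{\mathcal{F}} > 0$. The only subtle point is the parity observation at the outset; once that is in hand, the rest is a clean assembly of the two preceding lemmas and the ergodic/periodicity structure already formalised in the paper.
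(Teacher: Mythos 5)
Your proposal is correct and follows essentially the same route as the paper's proof: the odd cardinality of $\Gamma_A$ forces $\Delta_A \mathcal{H}_{\mathcal{J}}(\sigma)$ (an even multiple of a sum of an odd number of $\pm 1$ terms) to be nonzero, so no indifferent flips occur at sets containing $v$, and then \eqref{val1} of Lemma \ref{serve} and Lemma \ref{Lemnonrichiesto} bound the opposition and favour flips respectively, with ergodicity giving $\rho_{\mathcal{F}}>0$. You merely make explicit two points the paper leaves implicit (the parity observation, attributed there to \cite{GNS,NNS00}, and the union bound over the finitely many $A \ni v$), which is a faithful elaboration rather than a different argument.
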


 \begin{proof}
 Let us consider $A \in \mathcal{A}_k$ with $A \ni v $.
 As in \cite{GNS,NNS00}, we notice that if $t \in \mathcal{T}_A$ corresponds to
 a flip at $A $, then $t \in \mathcal{S}^-_{A, \infty} \cup \mathcal{S}^+_{A, \infty}$.
Formula \eqref{val1} in Lemma \ref{serve} implies that
$|\mathcal{S}_{A,t}^-|<\infty$ and Lemma \ref{Lemnonrichiesto}
states that $|\mathcal{S}_{A,t}^+|<\infty$, almost surely. Thus, we
have $|\mathcal{N}_{v}|<\infty$ almost surely. By ergodicity,
$\rho_{\mathcal{F}}>0 $.
%
\end{proof}

\begin{theorem} \label{cultb}

Let us consider a $ (k,  \alpha , \gamma; T)$-model on a $d$-graph
$G= (V, E)$ with $k \in \mathbb{N}$, $\alpha \in
    (0,1]$, $\gamma \in (0,1]$ and $T $ fast decreasing to zero.

Assume that there exists a finite set $B \subset V   $ such that
   $|B| > k $ and for any $A  \in \mathcal{A}_k$ with $A \cap B \neq \emptyset $  one has
   \begin{equation*}
     |\{ \{u,v\}\in E : u \in A \cap B , v \in A^c \cap B \}| >
      \end{equation*}
      \begin{equation}\label{grande2}
      |\{ \{u,v\}\in E : u \in A , v \in A^c \cap B^c \}|
      +|\{ \{u,v\}\in E : u \in A \cap B^c , v \in A^c \cap B \}|  .
   \end{equation}

 Then $\rho_{\mathcal{F}}>0 $.
In particular, $\mathbb{P}( \bigcap_{u \in B} \{  \lim_{t \to
\infty} \sigma_u(t) =+1 \})>0$.

\end{theorem}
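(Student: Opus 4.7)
The plan is to exhibit an event of positive $\mathbb{P}$-probability on which $\sigma_v(t)=+1$ for every $v\in B$ and every $t\geq 0$. This forces $\lim_{t\to\infty}\sigma_v(t)=+1$ for each such $v$, whence the ergodic argument behind the definition of $\rho_{\mathcal{F}}$ gives $\rho_{\mathcal{F}}>0$. I begin by imposing two independent favorable realizations of the quenched data, each of positive $\mathbb{P}$-probability (using $\alpha,\gamma\in(0,1]$ and finiteness of $B$ and $E(B)$): (a) $J_e=+1$ for every $e\in E(B)$, and (b) $\sigma_v(0)=+1$ for every $v\in B$.

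The central combinatorial step, for which hypothesis \eqref{grande2} is designed, is the following assertion: under (a), for every $A\in\mathcal{A}_k$ with $A\cap B\neq\emptyset$ and every $\sigma\in\{-1,+1\}^V$ with $\sigma|_B\equiv +1$,
\begin{equation*}
\Delta_A\mathcal{H}_{\mathcal{J}}(\sigma)\geq 2.
\end{equation*}
To see this, partition the boundary edges of $A$ appearing in \eqref{DeltaHonA} into four types according to whether each endpoint lies in $B$ or in $B^c$. The ``both endpoints in $B$'' type has cardinality equal to the left-hand side of \eqref{grande2}, and on each such edge one has $J_e=+1$ (by (a)) and $\sigma_x\sigma_y=+1$, so the contribution is exactly $+1$. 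The union of the remaining three types is precisely the set counted by the right-hand side of \eqref{grande2}, and on each of its edges $J_e\sigma_x\sigma_y\geq -1$. Since both sides of \eqref{grande2} are integers with LHS $>$ RHS, one has LHS $-$ RHS $\geq 1$, and the factor $2$ in \eqref{DeltaHonA} yields $\Delta_A\mathcal{H}_{\mathcal{J}}(\sigma)\geq 2$. The condition $|B|>k$ ensures that no $A\in\mathcal{A}_k$ can contain $B$, so that $A^c\cap B\neq\emptyset$ and the left-hand side of \eqref{grande2} can indeed be positive.

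Next I apply \eqref{fa} of Lemma \ref{serve}, conditionally on the favorable choice of $\mathcal{J}$ and $\sigma(0)$ (legitimate because $\mathcal{J}$, $\sigma(0)$, and the Harris graphical data are independent), with $V_0=B$, to produce an event
\begin{equation*}
E=\bigcap_{v\in B}\{\mathcal{N}_v^-=\emptyset\}
\end{equation*}
of positive probability, on which no flip in opposition of the Hamiltonian ever occurs at a site of $B$. On (a)$\cap$(b)$\cap E$ I argue by first-flip contradiction: set $T^*=\inf\{t>0:\sigma_v(t)\neq +1\text{ for some }v\in B\}$, and suppose $T^*<\infty$. By left-continuity of the trajectories and the definition of $T^*$, one has $\sigma|_B\equiv +1$ immediately before $T^*$; since the flip at $T^*$ must change some $\sigma_v$ with $v\in B$, it acts on some $A\in\mathcal{A}_k$ with $A\cap B\neq\emptyset$, and by the combinatorial step satisfies $\Delta_A\mathcal{H}_{\mathcal{J}}(\sigma(T^*))\geq 2>0$. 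But then $T^*\in\mathcal{N}_v^-$ for every $v\in A\cap B$, contradicting $E$. Multiplying the three positive probabilities gives $\mathbb{P}(\bigcap_{u\in B}\{\lim_{t\to\infty}\sigma_u(t)=+1\})>0$, and ergodicity upgrades this to $\rho_{\mathcal{F}}>0$.

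The main obstacle I anticipate is the combinatorial step: one has to read \eqref{grande2} carefully to verify that its right-hand side counts exactly the boundary edges of $A$ with at least one endpoint outside $B$, so that the worst-case value $-1$ per such edge is controlled precisely by the strict inequality. Once this is in place, the remainder is a routine freezing argument built on \eqref{fa} in Lemma \ref{serve}.
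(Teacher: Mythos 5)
Your proof is correct and follows essentially the same route as the paper's: positive probability of $J_e=+1$ on $E(B)$ and $\sigma(0)\equiv+1$ on $B$, the partition of the boundary edges of $A$ into the three types of \eqref{grande2} to get $\Delta_A\mathcal{H}_{\mathcal{J}}(\sigma)\geq 2$, and then \eqref{fa} of Lemma \ref{serve} plus ergodicity. Your explicit first-flip contradiction merely spells out the freezing step that the paper leaves implicit.
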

 \begin{proof}

Let us consider $B$ as in the statement of the theorem. Since
$\alpha
>0$ and $E(B)$ is finite, then  with positive
probability one has that $J_e=+1$ for any $e \in E(B)$. In fact, the
independence of the interaction leads to $\mathbb{P}(\bigcap_{e \in
E(B)} \{J_e=+1\})=\alpha^{|E(B)|} >0$. Analogously, it is
$\mathbb{P}(\bigcap_{v \in B} \{\sigma_v(0)=+1\})=\gamma^{|B|}  >0
$.

Notice that $\{ \{u,v\}\in E : u \in A \cap B , v \in A^c \cap B
\}$, $\{ \{u,v\}\in E : u \in A , v \in A^c \cap B^c \}$ and $\{
\{u,v\}\in E : u \in A \cap B^c , v \in A^c \cap B \}$ is a
partition of $\{\{u , v\}\in E: u\in A, v  \in A^c\}$.

Hence, inequality (\ref{grande2}) and conditions $\sigma_v =+1$ for
each $v\in B$ and $J_e=+1$ for each $e \in E(B)$ imply that for each
$A \in \mathcal{A}_k$ such that $A \cap B \neq \emptyset$ one has
$$
\Delta_A \mathcal{H}_{\mathcal{J}}(\sigma)  = 2 \sum_{\{u , v\}\in
E: u\in A, v  \in A^c }
 J_{\{u,v\} }\sigma_u \sigma_v \geq 2|\{ \{u,v\}\in E : u \in A \cap B , v \in A^c \cap B \}|
 $$
 $$
    -2 \big [  |\{ \{u,v\}\in E : u \in A , v \in A^c \cap B^c \}|
      +|\{ \{u,v\}\in E : u \in A \cap B^c , v \in A^c \cap B \}|
      \big ] \geq 2.
 $$

By formula \eqref{fa} in Lemma \ref{serve}, there is a positive
probability that  all the spins maintain the initial value $+1$ in
the region $B$. By ergodicity, we obtain $\rho_{\mathcal{F}}
>0$.
\end{proof}

\begin{rem}
\label{remrichiamo} Let us consider the case $k=1$ (see
\cite{NNS00}). The hypothesis of Theorem \ref{culta} can be
rephrased by stating that there exists $v \in V$ such that $d_v$ is
odd. Furthermore, condition in Theorem \ref{cultb} becomes
 \begin{equation*}
     |\{ \{u,v\}\in E : u \in A \cap B , v \in A^c \cap B \}| >
      |\{ \{u,v\}\in E : u \in A , v \in A^c \cap B^c \}| .
   \end{equation*}
\end{rem}
\begin{example}
\label{example1}
 Let us consider  $k=1$ and the hexagonal planar
lattice. In this situation $\rho_{\mathcal{F}}=1$, for any $\alpha,
\gamma \in [0,1]$ and $T$ fast decreasing to zero (see Theorem
\ref{culta} and Remark \ref{remrichiamo}). However, for $k\geq 2$,
it is easy to check that the hypothesis of Theorem \ref{culta} is
not verified for such a graph. In this case, Theorem \ref{hexag}
guarantees that $\rho_{\mathcal{F}}<1$ when $\alpha=1$ and
$\gamma=1/2$.
\newline
We present here a $2$-graph on which Theorem \ref{culta} can be
applied when $k =2 $ (see Figure \ref{fig4}). All the intersections
among line segments represent sites of the graph. The site denoted
with the black bullet is $v$. Each connected set $A \ni v$ of
cardinality 2 is such that $|\Gamma_A|=5$. The case $A =\{v\}$ is
such that $|\Gamma_A|=3$.

For what concerns Theorem \ref{cultb}, we address the reader to
Figure \ref{fig1}. In this case we consider $k=1$. Take $B$ as the
set of the black bullets. $A$ is a set containing only a vertex of
$B$. It is clear that Theorem \ref{cultb} can be applied.
\end{example}

\begin{figure}
  \includegraphics[scale=0.5]{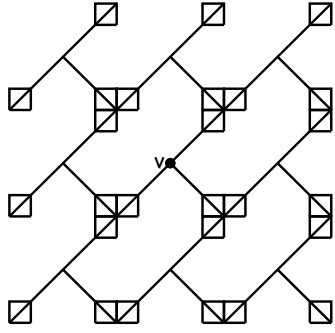}
  \caption{\small{Example of a $2$-graph on which Theorem
\ref{culta} can be applied when $k =2 $.}}
  \label{fig4}
\end{figure}
In the following results we adapt the definition of \emph{e-absent}
set in \cite{CDN02, GNS}.
\begin{definition}\label{defeabsent}
Let us consider a $d$-graph $G=(V, E)$, interactions $\mathcal{ J
}\in \{-1,+1\}^E $. Take the $(k, \langle \mathcal{J} \rangle,
\langle \sigma \rangle; T)$-model on $G$, with generic initial
configuration $ \sigma \in \{-1,+1\}^V  $, and  a
 finite $C \subset V $.  We say that $\hat \sigma_C
 =(\hat \sigma_x : x \in C)$ is k\emph{-absent on} $\mathcal{J}$ if 
there exists a
sequence of $m$ elements
$$
(A^{(j)} \in \mathcal{A}_k : \, A^{(j)} \subset C , \text { for }j
=1, \ldots , m ),
$$
 such that
$\Delta_{A^{(m)}} \mathcal{H}_{\mathcal{J}}( \sigma^{(m-1)} ) <0$,
while  $\Delta_{A^{(j)}} \mathcal{H}_{\mathcal{J}}( \sigma^{(j-1)}
)=0 $, for any $j=1, \dots, m-1$, where $ \sigma^{(0)}  $ coincides
with $\hat \sigma_C $ on $C$. 
Recursively,  $ \sigma^{(j)} =(\sigma^{(j-1)} )^{( A^{(j)} )} $, for
 $j=1, \dots, m-1$.

\end{definition}

\begin{rem} \label{importanti}
\begin{itemize}
\item[$(i)$] For any given $ \mathcal{J} $, if $\hat \sigma_C $ is
$k$-absent on $\mathcal{J}$, then it is also $k'$-absent on
$\mathcal{J}$, for each $k' > k$, being $ \mathcal{A}_k \subset
\mathcal{A}_{k'}$.

\item[$(ii)$]  Let us consider a finite set $C\subset V$. We
notice that $\hat \sigma_C$ is $k$-absent on $\mathcal{J}$ only on
the basis of the restriction of $\mathcal{J} $ to the finite set of
edges $\hat{E}(C)=\{ \{x, y \} \in E : \{x, y \} \cap C \neq
\emptyset \}$.

\item[$(iii)$] Let us consider
$ \hat \sigma_C $ $k$-absent on $\mathcal{J}$ and a
configuration $\bar \sigma \in \{-1,+1\}^V$.

If $ \bar \sigma_C $ is such that there exists a finite sequence of
$\bar m$ flips $A^{(j)} \subset C,\,A^{(j)} \in \mathcal{A}_k$, with
$\Delta_{A^{(j)}} \mathcal{H}_{\mathcal{J}}( \bar \sigma^{(j-1)} )
\leq 0$, where $ \bar \sigma^{(0)}  $ coincides with $\bar \sigma_C
$ on $C$, $ \bar \sigma^{(j)} =(\bar \sigma^{(j-1)} )^{( A^{(j)} )}
$, for each $j=1, \dots, \bar m$, and $\bar \sigma^{( \bar m)}_C =
\hat \sigma_C $, then $ \bar \sigma_C $ is $k$-absent on
$\mathcal{J}$ as well.
\end{itemize}
\end{rem}
\medskip


\begin{theorem}\label{teoabsent}
Let us consider a $(k, \alpha, \gamma; T)$-model  on a $d$-graph
$G=(V,E)$ with $k \in \mathbb{N}$, $\alpha \in (0,1)$, $\gamma \in
[0,1]$ and temperature profile $T$ fast decreasing to zero.
Moreover, let us take a finite subset $C \subset V$, interactions
$\hat{\mathcal{J}}= (\hat{J}_e:e \in E) \in \{-1,+1\}^E$ and a
configuration $\hat \sigma_C=(\hat \sigma_v:v \in C) \in
\{-1,+1\}^C$ $k$-absent on $\hat {\mathcal{J}}$.  Then the set
\begin{equation}
\label{Asigma} A(\hat \sigma_C)=\left\{ \sigma \in \{-1,+1\}^V:
\sigma_{v}= \hat \sigma_v,\,\forall\, v \in C \right\}
\end{equation}
recurs with null probability by using the probability measure
\begin{equation}\label{Pcondizionata}
    P ( \cdot ) = \mathbb{P} ( \cdot | \bigcap_{e \in \hat{E}(C)} \{ J_e
= \hat{J}_e\} ) ,
\end{equation}
where $\hat{E}(C)=\{ \{x, y \} \in E : \{x, y \} \cap C \neq
\emptyset \}$.
\end{theorem}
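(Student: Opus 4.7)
I argue by contradiction and assume that $A(\hat\sigma_C)$ recurs with positive probability under the conditional measure $P$ in \eqref{Pcondizionata}. The aim is to manufacture infinitely many flips at $A^{(m)}$ of favourable type (i.e.\ with $\Delta_{A^{(m)}}\mathcal{H}_{\hat{\mathcal{J}}}<0$), thereby contradicting the bound $\mathbb{E}[|\mathcal{S}^+_{A^{(m)},\infty}|]<\infty$ of Lemma \ref{Lemnonrichiesto}. The machinery of Lemma \ref{nuovoabs} will promote a ``one-interval'' lower bound into a recurrence statement.

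Let $A^{(1)},\dots,A^{(m)}\in\mathcal{A}_k$ be the sequence provided by Definition \ref{defeabsent}. I would define, on the path fragments $(Z_{t+s})_{s\in[0,1]}$, the event $B$ requiring that: (a) each of the Poisson processes $\mathcal{P}_{A^{(1)}},\dots,\mathcal{P}_{A^{(m)}}$ has exactly one arrival in the unit interval, in that chronological order; (b) no other Poisson process $\mathcal{P}_A$ with $A\in\mathcal{A}_k$ and $A\cap(C\cup\partial^{ext}C)\neq\emptyset$ fires during the interval; and (c) at each of the $m$ selected arrivals, the associated uniform random variable falls below the corresponding flip rate. If $B$ occurs and $Z_t\in A(\hat\sigma_C)$, the graphical construction forces the dynamical configuration to traverse exactly the sequence $\sigma^{(0)}\to\sigma^{(1)}\to\cdots\to\sigma^{(m)}$ of Definition \ref{defeabsent} inside $C$, while leaving the spins outside $C$ frozen; in particular the final arrival belongs to $\mathcal{S}^+_{A^{(m)},\infty}$.

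To apply Lemma \ref{nuovoabs} I would lower-bound $\inf_{z\in A(\hat\sigma_C)}\inf_{t\geq 0}P(B\mid Z_t=z)\geq\varsigma>0$. The probability of the required Poisson pattern (a)--(b) is a positive constant depending only on the finite number of sets $A\in\mathcal{A}_k$ hitting $C\cup\partial^{ext}C$. The first $m-1$ prescribed flips have acceptance probability exactly $1/2$, since $\Delta=0$ removes the $T(t)$-dependence in \eqref{cA}; the $m$-th flip has acceptance probability at least $1/2$, because $\Delta_{A^{(m)}}\mathcal{H}_{\hat{\mathcal{J}}}<0$ makes $c^{\hat{\mathcal{J}},T,(A^{(m)})}_t(\sigma)=1/(1+e^{2\Delta/T(t)})\geq 1/2$ for every $T(t)\geq 0$. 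Lemma \ref{nuovoabs} then yields that $B$ recurs with positive probability; each occurrence contributes a distinct new arrival to $\mathcal{S}^+_{A^{(m)},\infty}$, so $|\mathcal{S}^+_{A^{(m)},\infty}|=\infty$ with positive $P$-probability. This contradicts Lemma \ref{Lemnonrichiesto}, which survives the change of measure because the conditioning event has positive $\mathbb{P}$-mass (using $\alpha\in(0,1)$ and the finiteness of $\hat E(C)$).

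The main technical point I expect to confront is the verification that the increments $\Delta_{A^{(j)}}\mathcal{H}_{\hat{\mathcal{J}}}$ realized by the dynamics agree with those prescribed by Definition \ref{defeabsent} when the starting configuration $\sigma(t)$ agrees with $\hat\sigma_C$ on $C$ but is otherwise arbitrary on $V\setminus C$. Because each $A^{(j)}\subset C$ never flips spins outside $C$ and the conditioning pins down $\hat{\mathcal{J}}$ on every edge of $\hat E(C)$, the relevant energy increments depend only on $\hat\sigma_C$ and $\hat{\mathcal{J}}$ restricted to $\hat E(C)$; Remark \ref{importanti}(ii) is the exact statement that the $k$-absence property is insensitive to the data outside $\hat E(C)$, and it is this robustness that allows the same sequence $A^{(1)},\dots,A^{(m)}$ and its energy profile to be used uniformly over all $z\in A(\hat\sigma_C)$ and all times $t$.
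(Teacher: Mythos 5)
Your proposal is correct and follows essentially the same route as the paper: argue by contradiction, build the one-interval event $B$ that forces the $k$-absence sequence $A^{(1)},\dots,A^{(m)}$ to be executed (the indifferent flips accepted with probability $1/2$, the final favourable flip with probability at least $1/2$), promote it to a recurrent event via Lemma \ref{nuovoabs}, and contradict $\mathbb{E}[|\mathcal{S}^+_{A^{(m)},\infty}|]<\infty$ from Lemma \ref{Lemnonrichiesto}, removing the conditioning at the end since $\mathbb{P}(\bigcap_{e\in\hat E(C)}\{J_e=\hat J_e\})>0$. The only quibble is in your last paragraph: since $\partial^{ext}A^{(j)}$ need not lie in $C$, the increments also involve spins outside $C$, so the uniformity over $z\in A(\hat\sigma_C)$ rests on reading Definition \ref{defeabsent} as holding for every extension $\sigma^{(0)}$ of $\hat\sigma_C$ (not on Remark \ref{importanti}(ii), which only concerns $\mathcal{J}$); this is exactly the level of detail at which the paper itself leaves the point.
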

\begin{proof}
We consider the Harris' graphical model and denote it by $Z = (Z_t:
t \geq 0 )$. 
By hypothesis, $\mathcal{J}$ of the model coincides with
$\hat{\mathcal{J}}$ over the set $\hat{E}(C)$.

By contradiction, assume that $A(\hat \sigma_C)$ defined in
\eqref{Asigma} recurs with positive probability.

Define the random set
$$
\mathcal{K}=\bigcup_{A \in \mathcal{A}_k: A \cap C \neq \emptyset}
\mathcal{T}_A \cap [0,1].
$$
Therefore $\mathcal{K} $ is a function of the  process $Z$. Define
the set
$$
B=\Big\{(Z_t:t \in [0 , 1] ) :  \mathcal{K}= \{t_1, \dots,
t_m\}\text{ with } 0 <  t_1< \dots< t_m  <  1 \text{ and }
$$
$$
t_1 \in \mathcal{S}^0_{A^{(1)},1}, \dots, t_{m-1} \in
\mathcal{S}^0_{A^{(m-1)},1}, t_m \in \mathcal{S}^+_{A^{(m)},1}%
\Big\},
$$
where  $m $ and  the $A^{(j)}$'s are given as in Definition \ref{defeabsent}.

We notice that we are under the conditions of Lemma \ref{nuovoabs}.
In fact,
\begin{equation*}\label{estinf4}
\inf\limits_{z \in A   (\hat \sigma_C)   } \inf\limits_{t \geq 0} P(
\{ Z_{t+s} : s \in [0,1]\} \in B|Z_t=z) \geq \varsigma > 0,
\end{equation*}
because the probability that the  $U$'s are smaller than $1/2$ and
that the only finite sequence of arrivals in $[0,1]$ of the Poisson
processes $\mathcal{P}_A$, with $ A \in \mathcal{A}_k$ and $A \cap C
\neq \emptyset $, is ordered as in $\mathcal{K}$, is larger than
zero (see the proof of Theorem \ref{unione} for a similar argument). 

Since $A( \hat{\sigma}_C)$ recurs with positive probability,  then
Lemma \ref{nuovoabs} gives  that $B $ recurs  with positive
probability. In particular, one also has that $P(
|S^+_{A^{(m)},\infty} |= \infty )>0$ but $ \mathbb{P} ( \bigcap_{e
\in \hat{E}(C)} \{ J_e = \hat{J}_e\} )>0 $. Therefore $\mathbb{P}(
|S^+_{A^{(m)},\infty} |= \infty )>0$,   and this contradicts Lemma
\ref{Lemnonrichiesto}.
\end{proof}

\begin{rem} \label{remabsent}
We can extend Theorem \ref{teoabsent} also to $\alpha = 0$ (resp.
$\alpha = 1$) by selecting $\hat{ \mathcal{J}} \equiv -1$ (resp.
$\hat{ \mathcal{J}} \equiv +1$).
\end{rem}

\begin{theorem}\label{teorhoF}

Let us consider a $(k, \alpha, \gamma; T)$-model  on a $d$-graph
$G=(V,E)$ with $k \in \mathbb{N}$, $\alpha \in (0,1)$, $\gamma \in
[0,1]$ and temperature profile $T$ fast decreasing to zero. If there
exist interactions $\hat{\mathcal{J}} \in \{-1,+1\}^E$, a finite set
$C \subset V $ and $u,v \in C$ with $\nu_G( u,v) \geq k$ such that
 any  $\sigma_C \in \{ -1,+1\}^C$ with $\sigma_u =\sigma_v $
 (resp. $ \sigma_u =-\sigma_v$) is
$k$-absent on $\hat{\mathcal{J}}$. 
Then
$$
P \left (\lim_{t\to \infty} \sigma_u (t )=-\lim_{t\to \infty}
\sigma_v (t ) \right ) =1 , \text{ (resp. } P \left (\lim_{t\to
\infty} \sigma_u (t )=\lim_{t\to \infty} \sigma_v (t ) \right ) =1
\text{)},
$$
where $P $ is the conditioned probability as in
\eqref{Pcondizionata}, and $\rho_{\mathcal{F}}
>0$.
\end{theorem}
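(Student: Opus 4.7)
The plan is to handle both cases of the hypothesis in parallel; I will describe the argument under the assumption that every $\sigma_C$ with $\sigma_u=\sigma_v$ is $k$-absent on $\hat{\mathcal{J}}$, the opposite-sign case being analogous. First I would apply Theorem \ref{teoabsent} to each of the finitely many configurations $\hat{\sigma}_C\in\{-1,+1\}^C$ satisfying $\hat{\sigma}_u=\hat{\sigma}_v$. By hypothesis each such $\hat{\sigma}_C$ is $k$-absent on $\hat{\mathcal{J}}$, so the corresponding set $A(\hat{\sigma}_C)$ recurs with null probability under the conditional measure $P$ of \eqref{Pcondizionata}. Taking a finite union over those configurations, the random set $\{t>0:\sigma_u(t)=\sigma_v(t)\}$ is $P$-almost surely bounded; consequently there exists a random time $T_0<\infty$ with $\sigma_u(t)\neq\sigma_v(t)$ for every $t>T_0$.

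Next I would exploit the distance hypothesis $\nu_G(u,v)\geq k$ to upgrade this eventual separation into fixation of both spins. Any $A\in\mathcal{A}_k$ is connected in $G$ with $|A|\leq k$, so its $G$-diameter is at most $k-1<\nu_G(u,v)$; hence no single $A\in\mathcal{A}_k$ can contain both $u$ and $v$, and at each arrival time at most one among $\sigma_u,\sigma_v$ is affected by a flip. If $\sigma_u$ were to flip at some $s>T_0$, then $\sigma_v(s^+)=\sigma_v(s)$, whereas $\sigma_u(s^+)=-\sigma_u(s)=\sigma_v(s)=\sigma_v(s^+)$ by the strict inequality $\sigma_u(s)\neq\sigma_v(s)$. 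Thus $\sigma_u$ and $\sigma_v$ would coincide on a right neighborhood of $s$, contradicting the choice of $T_0$. The symmetric reasoning forbids flips of $\sigma_v$ after $T_0$, so $|\mathcal{N}_u|,|\mathcal{N}_v|<\infty$ $P$-a.s., the limits $\lim_{t\to\infty}\sigma_u(t)$ and $\lim_{t\to\infty}\sigma_v(t)$ exist, and they have opposite signs.

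To obtain $\rho_{\mathcal{F}}>0$ I would pass from $P$ back to $\mathbb{P}$: since $\alpha\in(0,1)$ and $\hat{E}(C)$ is finite, $\mathbb{P}(\bigcap_{e\in\hat{E}(C)}\{J_e=\hat{J}_e\})>0$, whence $\mathbb{P}(|\mathcal{N}_u|<\infty)>0$. The ergodic argument that underlies the very definition of $\rho_{\mathcal{F}}$ then promotes this positive fixation probability at the single site $u$ to $\rho_{\mathcal{F}}>0$. The complementary case in which every $\sigma_C$ with $\sigma_u=-\sigma_v$ is $k$-absent is treated in the same way, replacing the target relation $\sigma_u=\sigma_v$ by $\sigma_u=-\sigma_v$, and yields $\lim\sigma_u=\lim\sigma_v$ a.s.

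The main obstacle is mild and entirely geometric: one has to be sure that the distance condition $\nu_G(u,v)\geq k$ genuinely prevents $u$ and $v$ from lying in a common flipping set, so that one flip cannot simultaneously change the agreement/disagreement of the pair. Once this observation is available, Theorem \ref{teoabsent} is used as a black box and the rest is a short deterministic argument on the trajectories of the graphical representation.
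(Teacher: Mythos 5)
Your proposal is correct and follows essentially the same route as the paper's proof: condition on $\mathcal{J}=\hat{\mathcal{J}}$ on $\hat{E}(C)$, invoke Theorem \ref{teoabsent} over the finitely many configurations $\sigma_C$ with $\sigma_u=\sigma_v$ to rule out recurrence of $\{\sigma_u=\sigma_v\}$, use $\nu_G(u,v)\geq k$ to exclude simultaneous flips of $u$ and $v$, deduce fixation with opposite limits, and finish by positivity of the conditioning event and ergodicity. Your explicit finite-union step and the trajectory argument showing that a flip of $u$ or $v$ after the last agreement time would recreate agreement are just careful expansions of the paper's terse ``Therefore''.
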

\begin{proof}\label{si}
We only prove the case when any $\sigma_C \in \{ -1,+1\}^C$, with
$\sigma_u =\sigma_v $,
  is $k$-absent on $\hat{\mathcal{J}}$. The other case is analogous.

The interactions $\mathcal{J}$ coincide with $\hat{\mathcal{J}}$ on
$\hat{E}(C)$ with positive probability, since $\alpha \in (0,1)$ and
the set $\hat{E}(C)$ is finite.

Let us suppose  that $\mathcal{J}$ coincides with
$\hat{\mathcal{J}}$ on $\hat{E}(C)$. The vertices $u$ and $v $, as
defined in the theorem, flip simultaneously with probability zero
because $\nu_G( u,v) \geq k$. In fact, for any $A \in
\mathcal{A}_k$, the set $\{u,v\} $ is not a subset of $A$. By
Theorem \ref{teoabsent}, we know that the set $A(u,v)=\{\sigma \in
\{-1,+1\}^V:\sigma_u=\sigma_v\}$ recurs with zero probability.
Therefore there exists $\lim_{t \to \infty}\sigma_u(t) $, $\lim_{t
\to \infty}\sigma_v(t) $ $P$-a.s., and they have opposite sign
$P$-a.s..

So, if we remove the conditioning assumption that $\mathcal{J}$
coincides with $\hat{\mathcal{J}}$ on $\hat{E}(C)$, we have that the
spins on the vertices $u,v$ fixate with positive probability.

By ergodicity we obtain that $\rho_{\mathcal{F}} >0$.
\end{proof}


%
 We present an example in which
 Theorem \ref{teorhoF}  can be applied.
\begin{example} \label{M}
We consider $k\leq 3$ and a 1-graph $G =(V, E)$, as in Figure
\ref{fig2}, whose cell has vertices (the cell will be scaled by
$\frac{1}{5}$ and translated by $-\frac{1}{10}$) given by
$$
V_{Cell} =\{ 1 ,\,  2,\,  3, \, 4 , \, 5 \}
$$
and  edges
$$
E_{Cell} =\{ \{ -1, 1 \} , \,\{ 0, 1 \} ,\, \{ 1, 2 \} ,\, \{ 1, 3\}
,\, \{ 2, 4 \} ,\,   \{ 3, 5 \} ,\, \{ 4, 6 \} ,\, \{ 5, 6\} \}.
$$
\begin{figure}
  \includegraphics[scale=0.4]{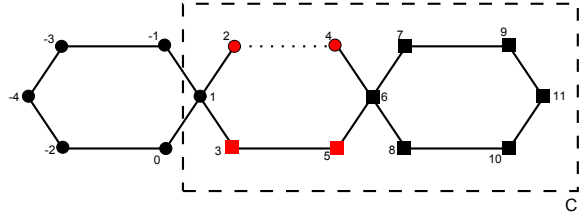}
  \caption{\small{These sites constitute a region of a graph $G=(V,E)$ with $V=\mathbb{L}_1$.
The interactions $\mathcal{J}$ are represented in the Figure. In
particular, continuous line stands for an edge $e$ such that
$J_e=+1$, while the dotted line describes $J_e=-1$. The black
bullets and squares are spins fixating definitively at a constant
configuration ($+1$ or $-1$) and, in general, bullets can be
different from squares. When bullets and squares share the same
constant configuration, then sites 2 and 4 flip infinitely often,
i.e. it does not exist $\lim_{t \to \infty} \sigma_j(t)$, with
$j=2,4$. Differently, when bullets and squares have opposite signs,
then it does not exist $\lim_{t \to \infty} \sigma_j(t)$, with
$j=3,5$. All the configurations $\sigma_C $ having $\sigma_{6} =
-\sigma_{11}$ are 1-absent on $\mathcal{J}$.
   }}
  \label{fig2}
\end{figure}

We consider as $C $ of Definition \ref{defeabsent} the vertices $\{
1,2, \ldots, 11\}$. The interactions $\mathcal{J}$, when restricted
to the elements of $C $, are taken all positive with the exception
of $J_{ \{ 2,4\}} =-1$. 
We show that
$$
A=\{\sigma \in  \{-1,+1\}^V : \sigma_6=-\sigma_{11}\}
$$
recurs with null probability. In particular, all the configurations
$\sigma_C \in \{-1,+1\}^C $ having $\sigma_{6} = -\sigma_{11}$ are
1-absent on $\mathcal{J}$.
%

Since  $\nu_G (6,11) =3$, then Theorem \ref{teorhoF} gives that
$\rho_{\mathcal{F}}>0$. By symmetry of the dynamics with respect to
global flips, i.e. $ \sigma \to -\sigma$, we can take $\sigma_6 =+1$
and $\sigma_{11}=-1$. In principle one should check $2^9$
configurations $\sigma_C $ to be 1-absent on $\mathcal{J}$.

By item $(iii)$ of Remark \ref{importanti}, there exists a finite
sequence of flips in favour of or indifferent to the Hamiltonian
leading to $\sigma_7=\sigma_8=-1 $. We assume that
$\sigma_7=\sigma_8=-1 $ for showing the 1-absence on $\mathcal{J}$
for all the configurations $\sigma_C$ having $\sigma_{6} =+1$ and
$\sigma_{11}=-1$.

We now distinguish the cases of $\sigma_1 =+1$ and $\sigma_1 =-1$.

Consider $\sigma_1 =+1$.
\begin{enumerate}
    \item If $\sigma_4 =- 1$ then, by flipping the site $6$,
we obtain $ \Delta_{\{6\}} \mathcal{H}_{\mathcal{J}} (\sigma) \leq
-4 $. Thus the $2^7$ configurations $\sigma_C \in \{-1,+1\}^C $ with
$\sigma_4=\sigma_{11}=-1 $ and $\sigma_1=\sigma_6=+1$ are $1$-absent
on the considered $\mathcal{J}$.
    \item If $\sigma_4 =+ 1$ and $\sigma_2 =+ 1$  then, by flipping
    the site $4$, we have $ \Delta_{\{4\}} \mathcal{H}_{\mathcal{J}}
    (\sigma)=0$. By flipping the site $6$,
we obtain $ \Delta_{\{6\}} \mathcal{H}_{\mathcal{J}} (\sigma) \leq
-4 $. The $2^6$ configurations $\sigma_C \in \{-1,+1\}^C $ with
$\sigma_{11}=-1 $ and $\sigma_1=\sigma_2=\sigma_4=\sigma_6=+1$ are
$1$-absent on the considered $\mathcal{J}$.
    \item If $\sigma_4 =+ 1$ and $\sigma_2 =- 1$  then, by flipping
    the site $2$, we have $ \Delta_{\{2\}} \mathcal{H}_{\mathcal{J}}
    (\sigma)=0$ and we are in case (2). The $2^6$ configurations $\sigma_C \in \{-1,+1\}^C $ with
$\sigma_2=\sigma_{11}=-1 $ and $\sigma_1=\sigma_4=\sigma_6=+1$ are
$1$-absent on the considered $\mathcal{J}$.
\end{enumerate}
The case $\sigma_1 =-1$ is analogous to the previous one by
replacing site 2 with 3 and site 4 with 5. Therefore $\lim_{t \to
\infty} \sigma_6 (t)= \lim_{t \to \infty} \sigma_{11} (t) $ and this
also implies that $\lim_{t \to \infty} \sigma_i (t) =\lim_{t \to
\infty} \sigma_6 (t) $, for $i=7,8,9,10,11$.

We notice that  all the interactions on $\hat{E}(C)$ are positive
with the exception of $ J_{\{2,4\}} =-1$ with positive probability.
In this case we can apply the arguments above and have that $\lim_{t
\to \infty} \sigma_i (t) =\lim_{t \to \infty} \sigma_1 (t) $, for
$i=-4,-3,-2,-1,0  $.

If $ \lim_{t \to \infty} \sigma_1 (t)= \lim_{t \to \infty} \sigma_6
(t) $ then the sites $  2$ and $4 $ flip infinitely many times.
Differently if $ \lim_{t \to \infty} \sigma_1 (t)= -\lim_{t \to
\infty} \sigma_6 (t) $ then the sites $ 3 $ and $ 5$ flip infinitely
many times.

This framework can be though as an illustrative example also of
Theorem \ref{unione}. In fact, the set $R$ can be taken as
coinciding with $\{-4,-3,-2,-1,0,1,6,7,8,9,10,11\}$, $S$ is the set
of the edges represented in Figure \ref{fig2}, at least one among
the four combinations of black bullets $\pm 1$ and black squares
$\pm 1$ fixates with positive probability and, accordingly, $D$ is
given by $\{2,4\}$ (bullets and squares with the same sign) or
$\{3,5\}$ (otherwise).
\end{example}

Two important remarks are in order: first, the model described in
this example is of type $\mathcal{M}$; second, this model can be
extended to $d$-dimensional graphs (see Figure \ref{fig3}).

\begin{figure}[h]
  \includegraphics[scale=0.2]{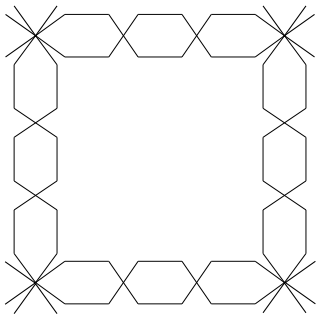}
  \caption{\small{Representation of $\Gamma_{3,3}(\mathbb{L}_2)$}.}
  \label{fig3}
\end{figure}


\begin{rem} \label{bipartito}
In \cite{NNS00}  the Ising stochastic model at zero temperature on
$\mathbb{Z}^2$ is studied.  Their model can be rewritten in our
language as $(1,1, \gamma ; T)$-model on the 2-graph $\mathbb{L}_2 =
(\mathbb{Z}^2, \mathbb{E}_2)$ with $\gamma = 1/2$ and $T \equiv 0$.
The authors show that the model is of type $\mathcal{I}$. With a
coupling argument one can prove that also for $\mathcal{J} \equiv -1
$, i.e. $\alpha =0$, the system is of type $\mathcal{I}$ (see
\cite{GNS}). In fact, consider two systems on the $d$-graph
$\mathbb{L}_d = (\mathbb{Z}^d, \mathbb{E}_d)$, $\gamma=1/2$ and
$\mathcal{J} \equiv +1 $ for one of the systems and
$\tilde{\mathcal{J}} \equiv -1 $ for the other one. We say that a
vertex $ v=(v_1, \ldots , v_d) \in \mathbb{Z}^d $ is \emph{even}
(resp. \emph{odd}) if $|\sum_{i =1}^d v_i|$ is even (resp. odd). We
take a initial configuration $\sigma $ for the system with $
\mathcal{J} $ and initial configuration $\tilde \sigma $ for the
system with $ \tilde{\mathcal{J}} $ as follows
\begin{equation*}\label{coupling}
    \tilde \sigma_v = \left \{\begin{array}{rl}
                        \sigma_v  & \text{ if } v \text{ is even;} \\
                        -\sigma_v  & \text{ if } v \text{ is odd.} \\
                      \end{array} \right .
\end{equation*}
Therefore, also the random vector $ (\tilde \sigma_v : v \in
\mathbb{Z}^d )$ is a collection of i.i.d.  Bernoulli random
variables with $\gamma =1/2$. Taking the same Poisson processes and
the same random variables $U$'s for both the systems at any time $t
$, one obtains
\begin{equation*}\label{coupling2}
    \tilde \sigma_v (t)= \left \{\begin{array}{rl}
                        \sigma_v (t) & \text{ if } v \text{ is even;} \\
                        -\sigma_v (t) & \text{ if } v \text{ is odd.} \\
                      \end{array} \right .
\end{equation*}
Therefore the two systems are of the same type ($\mathcal{I}$,
$\mathcal{F}$ or $\mathcal{M}$). The same construction works for any
bipartite graph.
\end{rem}
Example \ref{M} suggests a general result on a particular class of
graphs. We firstly define such a class, and then present a simple
and meaningful theorem. The proof of such a result can be viewed as
a generalization of the arguments carried out in Example \ref{M}.
\begin{definition}
\label{def:Gamma} For $\ell \geq 2 $ and $m \geq 1$, let us consider
a $d$-graph $G=(V,E)$, with $d \in \mathbb{N}$. We denote by
$\Gamma_{\ell,m}(G)=(V_\Gamma,E_\Gamma)$ the $d$-graph such that
each edge of $G$ is replaced with $m$ identical cycles whose number
of vertices is $2\ell$ with the following property:
\begin{itemize}
\item[$(i)$] two adjacent cycles have only one vertex in common. We
denote such vertices as \emph{common vertices}, and collect them in
the set $\mathcal{V}_I$.
\item[$(ii)$]  $\min_{  u,v \in \mathcal{V}_I}  \nu_{\Gamma_{\ell,m}(G)} (u,v)  =\ell $.
\end{itemize}
The vertices $V \subset V_\Gamma$ will be called \emph{original
vertices}.
\end{definition}
Notice that $ V\subset \mathcal{V}_I$. In Figure \ref{fig3} we have
the representation of $\Gamma_{3,3}(\mathbb{L}_2)$.
\begin{theorem}
\label{thm:ultimo} Let $G=(V,E)$ be a $d$-graph, with $d \in
\mathbb{N}$. Consider a $(k,\alpha, \gamma; T)$-model on
$\Gamma_{\ell,m}(G)$ with $\alpha \in (0,1)$, $\gamma \in [0,1]$ and
temperature profile $T$ fast decreasing to zero. If $\ell \geq k
\wedge 2 $ and $m \geq 3     $, the model is of type $\mathcal{M}$.
\end{theorem}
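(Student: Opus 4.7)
The strategy is to establish that the model has type $\mathcal{M}$ by separately verifying $\rho_{\mathcal{F}} > 0$ (via Theorem \ref{teorhoF}) and $\rho_{\mathcal{I}} > 0$ (via Theorem \ref{unione}). The construction generalizes Example \ref{M}: fix an edge $e_0 = \{u_0, v_0\} \in E$ of the base graph $G$, and denote by $C_1, \ldots, C_m$ the chain of $m \geq 3$ cycles of length $2\ell$ replacing $e_0$ in $\Gamma_{\ell,m}(G)$, with common vertices $u_0 = w_0, w_1, \ldots, w_m = v_0$ where adjacent cycles $C_i, C_{i+1}$ share $w_i$. Let $C \subset V_\Gamma$ be the finite set of all vertices on these cycles (enlarged along a chain of neighboring edges of $G$ if needed for the argument below). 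Fix interactions $\hat{\mathcal{J}}$ such that on each cycle $C_i$ exactly one edge carries $\hat J_e = -1$ (placed symmetrically on one of the two $\ell$-paths between $w_{i-1}$ and $w_i$) and all remaining edges carry $\hat J_e = +1$, so that each $C_i$ is frustrated ($\prod_{e \in C_i} \hat J_e = -1$).

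For $\rho_{\mathcal{F}} > 0$ I would apply Theorem \ref{teorhoF} with $u' = w_0$ and $v' = w_j$ chosen so that $\nu_{\Gamma_{\ell,m}(G)}(u', v') \geq k$; this can be arranged by extending the chain across several edges of $G$ if necessary, using the connectedness and infiniteness of $G$. The core claim is that every configuration $\sigma_C$ satisfying the sign relation $\sigma_{u'} = \varepsilon \, \sigma_{v'}$ (where $\varepsilon \in \{\pm 1\}$ is determined by the parity of the number of $-1$-edges on the chain-path from $u'$ to $v'$) is $k$-absent on $\hat{\mathcal{J}}$ in the sense of Definition \ref{defeabsent}. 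To verify this, I would exploit that each frustrated cycle $C_i$ must contain at least one domain wall in any configuration; since $\ell \geq k \wedge 2$, each cycle has interior length at least $2$, which leaves room to migrate these walls by indifferent flips in $\mathcal{A}_k$ supported in a single cycle at a time. After enough such indifferent migrations, two domain walls can be brought to adjacent sites in some cycle, and an annihilating flip in $\mathcal{A}_k$ strictly lowers the energy, completing the $k$-absence sequence. Theorem \ref{teorhoF} then yields $\rho_{\mathcal{F}} > 0$.

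For $\rho_{\mathcal{I}} > 0$ I would apply Theorem \ref{unione} with $R$ consisting of the common vertices $\{w_0, \ldots, w_m\}$ together with any finite boundary required to isolate the region; $\hat{\sigma}_R$ the fixation values produced by the previous step (one orientation having positive probability by the symmetry already inherent in the previous $\pm$-analysis); $S$ a finite set of edges on which $\hat{\mathcal{J}}_S$ agrees with $\hat{\mathcal{J}}$; and $R'$ a small arc containing two interior vertices of some cycle $C_i$ neighboring its $-1$-edge. Condition (i) of Theorem \ref{unione} follows directly from the preceding step. For condition (ii): once $w_{i-1}$ and $w_i$ are pinned to their equilibrium values, the frustration of $C_i$ forces at least one interior site of $C_i$ to admit a neutral flip, which provides a connected $D \subset R'$ with $\Delta_D \mathcal{H}_{\mathcal{J}}(\sigma) \leq 0$. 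Theorem \ref{unione} then concludes that the model is of type $\mathcal{M}$.

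The main obstacle is the combinatorial verification of $k$-absence in the first step. For $k = 1$ this reduces to the kink-migration-and-annihilation scheme already used in Example \ref{M}, but for $k \geq 2$ (and especially $k > \ell$, where $k$-site flips can span a common vertex $w_i$ and couple consecutive cycles in the chain) the case analysis becomes intricate. Here both the redundancy $m \geq 3$ (ensuring that at least one cycle can be ``sacrificed'' to absorb kinks while the remaining cycles rigidify the chain) and the minimum cycle length $2\ell \geq 4$ (providing enough interior room for the indifferent migrations) are essential to carry the argument through. Once $k$-absence is established, the remaining invocations of Theorems \ref{teorhoF} and \ref{unione} are routine.
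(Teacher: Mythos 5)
Your overall architecture coincides with the paper's: prove fixation of suitably chosen common vertices via Theorem \ref{teorhoF}, then feed that into Theorem \ref{unione} to get $\rho_{\mathcal{I}}>0$. However, the concrete construction in your first step fails. You place one $-1$ edge on \emph{every} cycle of the chain, so every cycle is frustrated, and you claim that every configuration $\sigma_C$ with a prescribed sign relation $\sigma_{u'}=\varepsilon\,\sigma_{v'}$ is $k$-absent. This is false. On a single $2\ell$-cycle with exactly one $-1$ edge the number of unsatisfied edges is always odd, so the energy-minimal configurations carry exactly one domain wall, and that wall may sit on either of the two $\ell$-paths joining the common vertices; the two choices realize \emph{opposite} sign relations between the endpoints (wall on the all-ferromagnetic path forces $\sigma_{w_{i-1}}=-\sigma_{w_i}$, wall on the path containing the $-1$ edge forces $\sigma_{w_{i-1}}=\sigma_{w_i}$). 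Concretely, the all-$(+1)$ configuration has one wall per cycle sitting on each $-1$ edge; every flip of a connected set of size $\le k$ has $\Delta_A\mathcal{H}_{\hat{\mathcal{J}}}\ge 0$, indifferent flips only migrate the walls while the energy can never drop below one unsatisfied edge per cycle, so no energy-lowering flip is ever reachable and the configuration is not $k$-absent; it has $\sigma_{u'}=\sigma_{v'}$. Relocating the wall of the first cycle to its all-ferromagnetic path yields an equally non-absent local minimum with $\sigma_{u'}=-\sigma_{v'}$. Hence \emph{neither} sign class consists entirely of $k$-absent configurations and Theorem \ref{teorhoF} cannot be applied. The paper avoids exactly this trap: in its condition (H1) the single $-1$ edge lies in the middle cycle $Y_2$ only, the cycles $Y_1$ and $Y_3$ are purely ferromagnetic, and absence is proved for disagreement across the \emph{unfrustrated} cycle $Y_1$ (where $\sigma_{v_1}\neq\sigma_{v_2}$ forces at least two walls, annihilated by an explicit sequence of single-site flips that partly routes them through $Y_2$). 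The one frustrated cycle is reserved for the second step, where its unavoidable, perpetually mobile wall supplies the neutral flip for condition (ii) of Theorem \ref{unione}.

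Two further points. The ``main obstacle'' you flag --- verifying $k$-absence for $k\ge 2$ --- is not an obstacle at all: by item $(i)$ of Remark \ref{importanti}, $1$-absence implies $k$-absence, so it suffices to exhibit sequences of \emph{single-site} flips, which is precisely what the paper does; no case analysis over sets in $\mathcal{A}_k$ is needed, and neither $m\ge 3$ nor $\ell\ge 2$ plays the ``sacrificial'' role you assign to them. Second, in your application of Theorem \ref{unione} the set $R'$ must be chosen so that \emph{every} configuration agreeing with $\hat\sigma_R$ on $R$ admits some $D\subset R'$ with $\Delta_D\mathcal{H}_{\mathcal{J}}(\sigma)\le 0$; a ``small arc neighbouring the $-1$ edge'' does not guarantee this, since the wall forced by frustration may sit anywhere on either path, leaving all edges of your arc satisfied. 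The paper instead takes $R'$ to be an entire $\ell$-path of $Y_2$ (which path depends on the signs of the two fixated limits) and lets $D$ be the singleton at the extremal wall position.
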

\begin{proof}
We briefly denote $ \nu_{\Gamma_{\ell, m }(G) }$ by $\nu$. Consider
two original vertices $u,v \in V$ such that $\nu(u,v)= \ell m$. The
finite subgraph composed by the $m$ cycles between $u$ and $v$ will
be denoted by $\Gamma^{u,v}=(V_\Gamma^{u,v},E_\Gamma^{u,v})$. The
cycles composing $\Gamma^{u,v}$ are subgraphs as well. They will be
denoted by $Y_1=(V_1,E_1), \dots, Y_m=(V_m,E_m)$, so that if one
takes $h, \ell=1, \dots, m$ such that $h<\ell$, then for each $x \in
V_h$ and $y \in V_\ell $ it results $\nu(x,u)<\nu(y,u)$.  We
consider the first three cycles $Y_1$, $Y_2$ and $Y_3$ and we label
the vertices of these cycles in the following way: $v_1=u$, $v_2 \in
V_1 \cap V_2$, $ v_3 \in V_2 \cap V_3$ and $ v_4 $ is different from
$v_3$ and belong to $\mathcal{V}_I \cap V_3$. The other vertices are
denoted by $ v_{a,b,s} $, with $ a= 1,2,3$, $b=1,2$, $s = 1, \ldots
, \ell -1 $. Specifically, the index $a$ means that $ v_{a,b,s} $
belongs to $V_a$; the index $b$ identifies one of the two paths of
length $\ell$ connecting the common vertices of the cycle $Y_a$; the
index $s$ is such that $\nu(v_{a,b,s},u)<\nu(v_{a,b,s+1},u)$, for
each $s=1, \dots, \ell-1$.

Let us consider the following condition
\begin{itemize}
\item[ (H1)]
 $ J_{ \{v_2, v_{ 2,1,1 }  \}  } =-1$;  $J_e = +1$,
for each $e \in (E_1 \cup E_2 \cup E_3) \setminus \{v_2, v_{ 2,1,1 }  \} $.
\end{itemize}
 With positive probability, there exist interactions $\mathcal{J}$
that satisfies the previous condition (H1). Define $C=V_1\cup V_2$.

We prove that the configurations belonging to
\begin{equation}\label{koko}
 \{ ( \sigma_v  \in \{ -1,+1\} : v \in C ) : \sigma_{v_1} \neq \sigma_{v_2}  \}  \subset
 \{-1,+1\}^C
\end{equation}
are 1-absent on the interactions $\mathcal{J}$ that satisfy
condition (H1). Without loss of generality in \eqref{koko}  one can
take $\sigma_{v_1} =+1 $ and  $\sigma_{v_2} =-1 $. Now we consider
two cases
\begin{itemize}
\item[(i)]  $\sigma_{v_1} = \sigma_{v_3} =+1 $ and  $\sigma_{v_2} = -1 $;
\item[(ii)]  $\sigma_{v_1}  =+1 $ and  $\sigma_{v_2} = \sigma_{v_3}= -1 $.
\end{itemize}
In  case (i) let us consider the following elements
\begin{equation*} \label{cappello}
\hat A^{(i)}  =\left \{
 \begin{array}{ll} \{ v_{1,1,i} \}  ,  &\text{ for } i = 1, \ldots , \ell-1; \\
 \{ v_{1,2,i- \ell +1} \}  ,  &\text{ for } i = \ell, \ldots , 2\ell-2; \\
  \{ v_{2,2,   3\ell -2- i } \}  ,  &\text{ for } i = 2\ell -1, \ldots , 3\ell-3; \\
   \{ v_{2 } \}  ,  &\text{ for } i = 3\ell-2. \\
 \end{array}
    \right .
\end{equation*}
For a given $\sigma_C =( \sigma_v : v \in C)$ satisfying item (i) we
eliminate  from  the sequence $   (     \hat A^{(i)}   : i=1, \ldots
, 3\ell - 2  ) $ the elements of the form $\{ v \} $ with $\sigma_v
=+1 $ to obtain the new sequence $(  A^{(i)}   : i=1, \ldots , m  )
$ (thus $m \leq 3\ell -2$). Using these elements and notation  in
Definition   \ref{defeabsent}, one can check that all the
$\Delta_{A^{(i)} }  \mathcal{H}_{\mathcal{J}} (\sigma^{(i-1)} ) \leq
0 $ for $ i =1, \ldots , m-1$, and $\Delta_{A^{(m)} }
\mathcal{H}_{\mathcal{J}}     (\sigma^{(m-1)} ) < 0 $. Notice that
$m \geq 1$, and $A^{(m)}=\{v_2\}$. Therefore, by using  item (iii)
of Remark \ref{importanti}, one has that all the configurations in
which item (i) holds true are 1-absent on $\mathcal{J}$.

In  case  (ii), we define
\begin{equation*} \label{cappello2}
\tilde A^{(i)}  =\left \{
 \begin{array}{ll} \{ v_{1,1,i} \}  ,  &\text{ for } i = 1, \ldots , \ell-1; \\
 \{ v_{1,2,i- \ell +1} \}  ,  &\text{ for } i = \ell, \ldots , 2\ell-2; \\
  \{ v_{2,1,   3\ell -2- i } \}  ,  &\text{ for } i = 2\ell -1, \ldots , 3\ell-3; \\
   \{ v_{2 } \}  ,  &\text{ for } i = 3\ell-2. \\
 \end{array}
    \right .
\end{equation*}
For $\sigma_C =( \sigma_v : v \in C)$ which satisfies item (ii) we
eliminate  from  the sequence $   (     \hat A^{(i)}   : i=1, \ldots
, 3\ell - 2  ) $ the elements of the form $\{ v_{1,b,s} \} $ with
$\sigma_{ v_{1,b,s} } =+1 $ and $\{ v_{2,1,s} \} $ with $\sigma_{
v_{2,1,s} } =-1 $. In so doing,  we have the new sequence $( A^{(i)}
: i=1, \ldots , m  ) $. By following the argument developed in case
(i),  all the configurations in which item (ii) is satisfied are
1-absent on $\mathcal{J}$.

Since $\nu(v_1,v_2)=\ell \geq k$, then Theorem \ref{teorhoF}
guarantees that, for large $t $, $\sigma_{v_1} (t ) $ and
$\sigma_{v_2} (t ) $ fixate and are equal. Therefore,
$\rho_{\mathcal{F}} >0$.

With an analogous argument one can prove the same result for $v_3$
and $v_4$, i.e. there exist the limits $ \lim_{t \to \infty }
\sigma_{v_3} (t) $ and $ \lim_{t \to \infty } \sigma_{v_4} (t) $,
and they coincide.

As in Theorem \ref{unione} we take $R = \{v_2, v_3\} $. In fact,
condition \eqref{thm4new1} is verified, i.e.: with positive
probability, at least one of the following four alternatives is
verified: $\lim_{t \to \infty }\sigma_{v_2}(t)=\pm 1$ and $\lim_{t
\to \infty }\sigma_{v_3}(t)=\pm 1$. We analyze separately the four
cases.

If $\lim_{t \to \infty }\sigma_{v_2}(t)=+ 1$ and $\lim_{t \to \infty
}\sigma_{v_3}(t)=+ 1$ with positive probability, then we select
$R'=\{v_{2,1,s}:s =1,\dots, \ell-1\}$. Let us consider
$$
\bar{s}_1=\sup\{s\in \{1, \dots, \ell-1\}:\sigma_{v_{2,1,s}}=-1\},
$$
with $\sup \emptyset=1$. We define $D=\{v_{2,1,\bar{s}_1}\}$. Notice
that $\sigma_{v_{2,1,\bar{s}_1+1}}=+1$, with the conventional
agreement that $v_{2,1,\ell}=v_3$. Then, condition (ii) of Theorem
\ref{unione} is satisfied and the model is of type $\mathcal{M}$.

An analogous argument applies to the case of $\lim_{t \to \infty
}\sigma_{v_2}(t)=\lim_{t \to \infty }\sigma_{v_3}(t)=-1$.

If $\lim_{t \to \infty }\sigma_{v_2}(t)=+ 1$ and $\lim_{t \to \infty
}\sigma_{v_3}(t)=- 1$ with positive probability, then
$R'=\{v_{2,2,s}:s =1,\dots, \ell-1\}$. In this case
$$
\bar{s}_2=\sup\{s\in \{1, \dots, \ell-1\}:\sigma_{v_{2,2,s}}=+1\},
$$
with $\sup \emptyset=1$. By taking $D=\{v_{2,2,\bar{s}_2}\}$ and
noticing that $\sigma_{v_{2,2,\bar{s}_2+1}}=-1$, with
$v_{2,2,\ell}=v_3$, we obtain condition (ii) of Theorem \ref{unione}
and the model is of type $\mathcal{M}$.

In a similar way we can treat the case of $\lim_{t \to \infty
}\sigma_{v_2}(t)=-1$ and $\lim_{t \to \infty }\sigma_{v_3}(t)=+1$.

\end{proof}



We recall the definition of cluster that will be used in the next
Theorem. Let us consider  a $d$-graph $G= (V, E)$ and a stochastic Ising model
$\sigma (\cdot ) = ( \sigma_u (t) : u \in V, t \geq 0   ) $.
\emph{The
cluster $C_v (t)$ of the site $v$ at time $t$} is the maximal
connected component of the set
$\{u \in V : \sigma_u(t)=\sigma_v(t)\} $ which contains $v$.


\begin{theorem}
\label{thm:ultimissimo} Let us consider a $(1,\alpha, 1/2; T)$-model
on $\Gamma_{\ell,m}(\mathbb{L}_2)$, with $\alpha \in \{0,1\}$, $\ell
\geq 2$, $m \geq 1$ and temperature profile $T$ fast decreasing to
zero. Then the $(1,\alpha, 1/2; T)$-model is of type $\mathcal{I}$.
\end{theorem}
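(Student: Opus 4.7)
The plan is to first reduce to the ferromagnetic case via the bipartite coupling of Remark \ref{bipartito}, and then to argue by contradiction that any fixation propagates through paths, cycles and chains to force $\sigma(\infty)$ to be constant on $V_\Gamma$, which is ruled out by ergodicity combined with the $\sigma\mapsto-\sigma$ symmetry.

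First I would observe that $\Gamma_{\ell,m}(\mathbb{L}_2)$ is bipartite: $\mathbb{L}_2$ is bipartite and every added cycle has the even length $2\ell$, so the two color classes of $\mathbb{L}_2$ extend consistently to $V_\Gamma$. Hence Remark \ref{bipartito} applies to the $\gamma=1/2$ case, and I may assume $\alpha = 1$ henceforth.

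Suppose for contradiction that $\rho_{\mathcal{F}}>0$. Let $x\in V_\Gamma$ have degree $2$ with neighbours $x^-,x^+$. If $\sigma_x$ fixates at $\hat\sigma_x$ but one of $\sigma_{x^\pm}$ either fails to fixate or fixates to $-\hat\sigma_x$, then $\Delta_{\{x\}}\le 0$ recurs in time; Lemma \ref{serve} controls the finitely many in-opposition flips at $x$, and Lemma \ref{nuovoabs} applied to the Poisson process $\mathcal{P}_{\{x\}}$ (whose arrivals in any unit interval happen with probability at least $1-e^{-1}$ and which fires a flip with probability at least $1/2$ whenever $\Delta_{\{x\}}\le 0$) then forces $x$ to flip infinitely often, a contradiction. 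Hence both $\sigma_{x^\pm}$ fixate to $\hat\sigma_x$. Iterating along the $\ell-1$ interior vertices of a path between two consecutive common vertices $w,w'$, the fixation of any one interior vertex forces the whole path, including $w$ and $w'$, to fixate at a common value. A parallel 1D Ising relaxation argument for the other path of the same cycle, whose two boundary vertices become eventually fixed at a common value so that Lemmas \ref{serve} and \ref{Lemnonrichiesto} yield finite expected in-opposition and in-favor flips (and hence the usual annihilating-walks dynamics completes full relaxation on the finite chain), shows that both paths of the cycle fixate together. Iterating this pattern across a chain of $m$ cycles and through the original vertices of $\mathbb{L}_2$, the fixation of any single interior vertex propagates to the entire $\Gamma_{\ell,m}(\mathbb{L}_2)$, and $\sigma(\infty)$ is constant on $V_\Gamma$.

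To close: $\{\sigma(\infty)\equiv +1\}$ and $\{\sigma(\infty)\equiv -1\}$ are translation invariant, hence of probability $0$ or $1$ by ergodicity of the Harris representation; the $\sigma\mapsto-\sigma$ symmetry preserved by $\alpha=1$ and $\gamma=1/2$ gives them equal probabilities, and being disjoint both vanish. Combined with the observation that $\rho_{\mathcal{F}}>0$ forces positive probability of fixation at the origin, this contradicts the previous paragraph, so $\rho_{\mathcal{I}}=1$. The main obstacle in this plan is the cycle-to-cycle propagation through intermediate common vertices (degree $4$) and original vertices (degree $8$): the clean two-neighbour-agreement argument of the degree-$2$ case must be extended by a 1D Ising relaxation with eventually-fixed boundary, and this is where the strong Markov property at the random boundary-stabilisation time together with the finite-expectation bounds of Lemmas \ref{serve} and \ref{Lemnonrichiesto} have to be used carefully.
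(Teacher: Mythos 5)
Your reduction to $\alpha=1$ via Remark \ref{bipartito} and your endgame (ergodicity plus the $\sigma\mapsto-\sigma$ symmetry killing the two constant limits) are fine, and the degree-$2$ step is correct: if a vertex $x$ with exactly two neighbours fixates, then the event $\{\Delta_{\{x\}}\mathcal{H}_{\mathcal{J}}(\sigma)\leq 0\}$ cannot recur (Lemma \ref{nuovoabs} would force infinitely many flips at $x$), so eventually both neighbours agree with $x$ forever. The genuine gap is the propagation through vertices of degree at least $3$, which you flag as ``the main obstacle'' but do not actually overcome. If a common vertex $w'\in\mathcal{V}_I$ (degree $4$) fixates at $+1$ with its two neighbours $a,b$ in the already-fixated cycle eventually equal to $+1$, then non-recurrence of $\{\Delta_{\{w'\}}\leq 0\}$ only tells you that eventually at least one of the two neighbours $c,d$ in the next cycle equals $+1$ at each large time; it does not fixate either of them, so the ``1D relaxation with eventually-fixed boundary'' cannot start on the next cycle, because its far boundary vertex is exactly what you are trying to fixate. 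The same failure occurs, worse, at the original vertices of $\mathbb{L}_2$, which have degree $8$ in $\Gamma_{\ell,m}(\mathbb{L}_2)$. There is also a starting-point problem: $\rho_{\mathcal{F}}>0$ only yields that \emph{some} vertex fixates with positive probability, and since $\mathcal{V}_I$ has positive density this vertex may have degree $\geq 4$, so even the first degree-$2$ step may be unavailable. In short, the claim that fixation of a single vertex forces $\sigma(\infty)$ to be constant on $V_\Gamma$ is unsubstantiated, and no purely local argument of this type can work (it would equally ``prove'' the result for $\mathbb{L}_2$ itself, where the known proof is global).

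The paper's proof is structured quite differently and globally. It splits on whether the origin (an original vertex) fixates with positive probability. If not, adjacent original vertices disagree recurrently, and Lemma \ref{nuovoabs} pumps both signs into every intermediate vertex, giving type $\mathcal{I}$ directly. If yes, then $\gamma=1/2$ and ergodicity force a positive density of sites fixated at $-1$ inside $[-L,L]^2$; on the other hand every finite cluster is $1$-absent on $\mathcal{J}\equiv+1$, so $|C_O(t)|\to\infty$ in probability and the cluster of the origin touches all four sides of $[-L,L]^2$; the FKG inequality (available because $k=1$ and the four crossing events are increasing) bounds the probability of the simultaneous four-arm event below by $(1/8)^4$ uniformly in $L$, and Lemma \ref{nuovoabs} then upgrades this to the recurrence of the all-$+1$ box with probability at least $(1/8)^4$, contradicting the density of $-1$-fixated sites. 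If you want to salvage your plan, you would need to replace the cycle-to-cycle propagation with an argument of this global cluster-growth/FKG type.
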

\begin{proof}
First of all, we notice that $\Gamma_{\ell,m}(\mathbb{L}_2)$ is a
bipartite graph. Then, by Remark \ref{bipartito}, it is sufficient
to prove the result for $ \alpha =1$.

 The origin $O $ of the graph is placed in $(0,0) \in \mathbb{Z}^2$.
 Also in this case, for the sake of simplicity, we will write $\nu $ instead of  $ \nu_{\Gamma_{\ell, m }(\mathbb{L}_2) } $.

 We first show that if
 $$
 p=\mathbb{P}(\text{There exists }
 \lim_{t \to \infty } \sigma_O (t) =+1)= \mathbb{P}(\text{There exists }
 \lim_{t \to \infty } \sigma_O (t) =-1)=0$$
  then the model is of type $\mathcal{I}$.
 By  translation invariance of the graph, each
 vertex belonging to  $\mathbb{Z}^2 $  reaches the fixation
   with null probability.

 For a vertex $x \notin \mathbb{Z}^2 $
 let us take the two vertices $u, v \in \mathbb{Z}^2$ having Euclidean distance equal to one and
 such that $\nu (x, u), \nu (x,v) < \nu (u,v )$. The fact that $p=0 $ means that the event
 $$
 A=\{\sigma \in \{-1,+1\}^{V_\Gamma} : \sigma_u\neq \sigma_v\}.
 $$
 recurs with probability one.

 Now, define the sets
 $$
 B^+=\{(\sigma(s):s \in [0,1]):\exists \,t \in [0,1] \text{ such that }
 \sigma_x(t)=+1\}
 $$
 and
 $$
 B^-=\{(\sigma(s):s \in [0,1]):\exists \,t
 \in [0,1] \text{  such that  } \sigma_x(t)=-1\}
 $$
 One can check that
 $$
 \inf_{\sigma \in A}\inf_{t \geq 0}
 \mathbb{P}(    (\sigma(t+s): s \in [0,1])    \in    B^+|\sigma{(t)}=\sigma) >0,
 $$
 the same for $B^-$. In fact, the definition of $u,v$ and  $x$,
 along with the opposite signs of $\sigma_u$ and $\sigma_v$, allow to
 have the propagation of the sign of $u$ or $v$ over the vertex
 $x$ with positive probability.  Since $A$ recurs with probability one,
 by Lemma
 \ref{nuovoabs}, we get that also $B^+$ and $B^-$ recur with
 probability one. This means that the model is of type
 $\mathcal{I}$.

 Now the proof proceeds by contradiction. Suppose that
\begin{equation}\label{conv}
p=\mathbb{P}\left  (   \text{There exists } \lim_{t \to \infty }
\sigma_O (t) =+1 \right )
  =\mathbb{P}\left  (\text{There exists } \lim_{t \to \infty } \sigma_O (t) =-1 \right )>0.
\end{equation}

 The second equality in \eqref{conv} follows
     by $\gamma = 1/2$.
 Then, by the ergodic theorem, one should have that $\rho_{\mathcal{F}} >0$.

By the ergodic theorem, for any
 $\varepsilon >0 $ there exists an $L \in \mathbb{N}$ such that
 \begin{equation}\label{dar}
  \mathbb{P} ( \limsup_{t \to \infty}   | \{ v\in [-L, L]^2 :
  \sigma_v (t) =-1   \}  |  =0  )  <  \varepsilon .
  \end{equation}
 The previous inequality  means that the event
 $$
\hat A_L =  \{ \sigma_v=+1, \,\, \forall \, v \in [-L,L]^2\}
  $$
  recurs with probability smaller than $\varepsilon $.

Now we notice that all the finite clusters are $1$-absent on
$\mathcal{J}\equiv  +1$. In fact, given a cluster $C_v(t)$, there
exists a sequence of flips, which are indifferent for or in favour
of the Hamiltonian, such that all the spins associated to the
vertices of $C_v(t)$ change their sign (see Theorem
\ref{teoabsent}). This means that, for $v \in V_\Gamma $ and $M \in
\mathbb{N}$,
$$
\lim_{t \to \infty } \mathbb{P}(  | C_v (t)| > M ) =1.
$$
Therefore,
$$
\lim_{t \to \infty } \mathbb{P} (   C_O (t) \cap \partial   [-L,
L]^2  \neq \emptyset  ) =1.
$$
Let us define the four events associated to the four sides of
$\partial [-L, L]^2$
 $$
E_1 (t)= \{ \sigma_O(t)=+1 \text{ and } C_O (t ) \cap ( [- L, L ]
\times \{ L\} )   \neq \emptyset
       \}      ,
 $$
 $$
E_2 (t)=  \{ \sigma_O(t)=+1 \text{ and } C_O (t ) \cap ( [- L, L ]
\times \{ -L\} )   \neq \emptyset
       \}          ,
 $$
 $$
E_3 (t)= \{  \sigma_O(t)=+1 \text{ and } C_O (t ) \cap (\{ L\}\times
[- L, L ] )   \neq \emptyset
       \},
 $$
 and
  $$
 E_4 (t)= \{  \sigma_O(t)=+1 \text{ and } C_O (t ) \cap (\{ -L\}\times
[- L, L ] )   \neq \emptyset
       \}     .
  $$%
  By symmetry of the graph, the events $E_1(t), E_2(t),E_3(t), E_4(t)$ have the same probability.
  Therefore, using also that $\gamma = 1/2$, one obtain
 $$
 \liminf_{t \to \infty }   (E_i (t)) \geq \frac{1}{8}, \qquad
 \forall \, i =1, 2,3,4.
 $$
We notice that $E_i(t)$ is an increasing event, for each $i =
1,2,3,4$.

Let us define
$$
E (t )= \bigcap_{i =1}^4 E_i(t) .
 $$

By the fact that the events $E_i $'s are increasing and $k =1$ one
can use the FKG inequality to bound the probability of $E(t)$ (see
e.g. \cite{Liggett}). Therefore
 $$
 \liminf_{t \to \infty } \mathbb{P}   (E(t)) \geq
  \liminf_{t \to \infty }  \left (  \mathbb{P}   (E_N(t))
   \mathbb{P}  (E_E(t))  \mathbb{P}    (E_S(t))  \mathbb{P}  (E_O(t)) \right )   \geq
  \left (
 \frac{1}{8} \right )^4.
 $$

Consider the sequence of events  $(E (n) : n \in \mathbb{N} )$;
by Fatou's Lemma,
$$
  \mathbb{P}    ( \limsup_{n \to \infty } E(n)) \geq \limsup_{n \to \infty }
\mathbb{P}   (  E(n)) \geq      \left (
 \frac{1}{8} \right )^4      .
$$

In the frame of Lemma \ref{nuovoabs} this means that the event
$$
A_L =\{ O \text{ is connected to the four sides of
  $[-L,L]^2$ with $+1$ spins} \}
$$
 recurs with a positive probability  larger or equal to
 $\left (
 \frac{1}{8} \right )^4$.

Let us define the event
$$
B_L= \{ \exists s \in [0,1]: \sigma_u(s)=+1,\,\forall\, u \in [-L,
L]^2   \} .
$$
 One has
 $$
\inf_{\sigma\in  A_L} \inf_{t \geq 0} \mathbb{P} (       (\sigma(t+s): s \in [0,1])    \in        B _L| \sigma{(t)}
= \sigma )>0 .
 $$
 In fact, if the set of vertices with negative spins is not empty, then there exists
 at least one vertex that can change its spin from $-1$ to $+1 $ with a
flip that is
  indifferent for or in favour of the Hamiltonian. Then, recursively, all the
  spins of the vertices
 belonging to $[-L, L]^2$ can become positive at a same time.
By Lemma \ref{nuovoabs},  one obtains that  $ B_L $ recurs with
probability larger or equal than   $\left(\frac{1}{8} \right)^4$.
Therefore, for each $L \in \mathbb{N}$, the probability that there
are not vertices belonging to $[-L, L]^2 $ whose spins fixate to
$-1$ is at least
   $\left(\frac{1}{8} \right)^4$. This last assertion contradicts formula \eqref{dar} when
$\varepsilon < \left(\frac{1}{8} \right)^4$.
\end{proof}


%

\section{Conclusions}

In this paper we have presented a generalization of the Glauber
dynamics of the Ising model. The temperature is assumed to be
time-dependent and fast decreasing to zero, hence including the case
of $T \equiv 0$. Moreover, it is allowed that spins flip
simultaneously when belonging to some connected regions. The
dynamics is taken over general periodic graphs embedded in
$\mathbb{R}^d$. The obtained results  can be compared with the
standard case of zero-temperature, cubic lattice and $k=1$.

For the cubic lattice
  $\mathbb{L}_2$ the paper \cite{NNS00} says that for $\alpha =1$ or
  $\alpha =0 $ and $\gamma =\frac{1}{2}$ the model is of type
  $\mathcal{I}$. On the same graph, \cite{GNS} proves that the model is of type
  $\mathcal{M}$ when $\alpha \in (0,1)$ and $\gamma =\frac{1}{2}$
  (actually,
  their arguments hold true for $\gamma \in (0,1)$ as well). In \cite{FSS,
  Morris}  it is shown that, for $\mathbb{L}_d$ with $d\geq 2$,
  the model is of type $\mathcal{F}$ when $\alpha
  =1$ and $\gamma $ sufficiently close to one (or, by symmetry, sufficiently close to
  zero). In particular, it is known that the limit configuration is
   given by spins whose values are $+1$ (or, by symmetry, $-1$). For a better
   visualization of the results,
   see Figure \ref{fig5} (left panel).

   For what concerns the graphs $\Gamma_{\ell,m}(G)$ of Definition
   \ref{def:Gamma}, see Figure \ref{fig5} (central panel).
   In this case, when $\ell \geq k \wedge 2$ and $m \geq 3$,
   we have shown that the $(k, \alpha, \gamma;T)$-model is of type $\mathcal{M}$ for $\alpha \in (0,1)$
 and $\gamma \in [0,1]$. The particular case of
$\Gamma_{\ell,m}(\mathbb{L}_2)$ gives that the $(1, \alpha,
1/2;T)$-model is of type $\mathcal{I}$ for $\alpha=0,1$.

When $T$ is fast decreasing to zero and positive, and by considering
 $\mathbb{L}_d$ with $d \geq 2$, then it is possible to
exclude that the $(k, \alpha, \gamma;T)$-model is of type
$\mathcal{F}$ (see  Theorem \ref{infiniti}) and refer to Figure
\ref{fig5} (right panel).

We feel that Figure \ref{fig5} might also contribute to highlight
some problems left open by this paper.

To conclude, we  think that our results may represent a first move
towards the following three conjectures.
\begin{itemize}

\item[$(i)$] If $\alpha, \gamma \in (0,1)$ and $T$ is fast decreasing to
zero, then the type of the $(k, \alpha, \gamma;T)$-model over a
$d$-graph $G=(V,E)$ is identified by the value of $k \in \mathbb{N}$
and by the graph $G$.

\item[$(ii)$] Fix $\alpha, \gamma \in [0,1]$, a temperature profile fast
decreasing to zero $T$ and a $d$-graph $G$. Set $\mathcal{F}$,
$\mathcal{M}$ and $\mathcal{I}$ to 1,2,3, respectively. Define the
function $\xi_{\alpha, \gamma, T, G}:\mathbb{N} \rightarrow
\{1,2,3\}$ assigning to any $k \in \mathbb{N}$ the type of the $(k,
\alpha, \gamma;T)$-model over $G$. The function $\xi_{\alpha,
\gamma, T, G}$ is nondecreasing.

\item[$(iii)$] Let $G=\mathbb{L}_d$, for $d \geq 2$.
Consider a $(k,1, \gamma; T)$-model on $\Gamma_{\ell,m}(G)$.
 Then there exists $\varepsilon \in (0,1/2) $ and
 a temperature profile $T^\star $ fast decreasing to zero such that:
for each $\gamma \in [0,\varepsilon)\cup (\varepsilon,1]$ and for
each temperature profile $T$ such that $T(t) \leq T^\star(t)$, for
each $t \geq 0$, then the $(k,1, \gamma; T)$-model is of type
$\mathcal{F}$.
\end{itemize}

\begin{figure}[h]
  \includegraphics[scale=0.5]{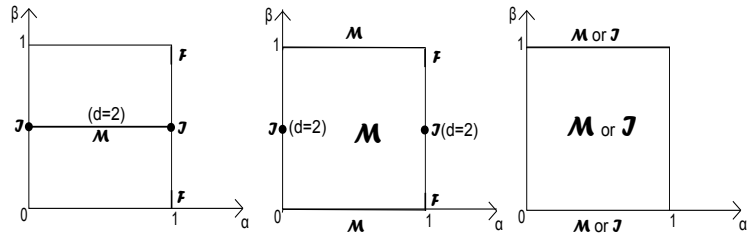}
  \caption{\small{Graphical visualization of the conclusions.
  The panel on the left describes the main results in the
  literature for the stochastic Ising models at
  zero-temperature on the graph $\mathbb{L}_d$. Notice that
  the case of $\gamma=1/2$ holds only for $d=2$. The central panel
  represents our results for the graphs $\Gamma_{\ell,m}(\mathbb{L}_d)$.
  The panel on the right depicts Theorem \ref{infiniti}. }}
  \label{fig5}
\end{figure}

\subsection*{Acknowledgement}
We thank two anonymous referees for their many insightful comments
and suggestions. We thank also Lorenzo Bertini, Emanuele Caglioti
and Mauro Piccioni for helpful discussions. All the remaining errors
are our own responsibility.

%

\begin{thebibliography}{10}

\bibitem{Arratia1}
R.~Arratia.
\newblock Site recurrence for annihilating random walks on {${\bf Z}_{d}$}.
\newblock {\em Ann. Probab.}, 11(3), 1983, 706--713.

\bibitem{CDN02}
F.~Camia, E.~De~Santis, and C.~M. Newman.
\newblock Clusters and recurrence in the two-dimensional zero-temperature
  stochastic {I}sing model.
\newblock {\em Ann. Appl. Probab.}, 12(2), 2002, 565--580.

\bibitem{CM06}
P.~Caputo and F.~Martinelli.
\newblock Phase ordering after a deep quench: the stochastic {I}sing and hard
  core gas models on a tree.
\newblock {\em Probab. Theory Related Fields}, 136(1), 2006, 37--80.

\bibitem{DEKNS}
M.~Damron, S.~M. Eckner, H.~Kogan, C.~M. Newman, and
V.~Sidoravicius.
\newblock Coarsening dynamics on $\mathbf{Z}^d$ with frozen vertices.
\newblock {\em J. Statist. Phys.}, 160(1), 2015, 60--72.

\bibitem{DKNS}
M.~Damron, H.~Kogan, C.~M. Newman, and V.~Sidoravicius.
\newblock Fixation for coarsening dynamics in 2{D} slabs.
\newblock {\em Electron. J. Probab.}, 18(105), 2013, 1--20.

\bibitem{DS02Glauber}
E.~De~Santis.
\newblock Glauber dynamics of spin glasses at low and high temperature.
\newblock {\em Ann. Inst. H. Poincar\'e Probab. Statist.}, 38(5), 2002, 681--710.

\bibitem{DS03Torpid}
E.~De~Santis.
\newblock Torpid mixing of {S}wendsen-{W}ang dynamics on {$\bold Z^d$} for
  low-temperature non-ferromagnetic disordered systems.
\newblock {\em Markov Process. Related Fields}, 9(1), 2003, 35--57.

\bibitem{DSL}
E.~De~Santis and A.~Lissandrelli.
\newblock Developments in perfect simulation of {G}ibbs measures through a new
  result for the extinction of {G}alton-{W}atson-like processes.
\newblock {\em J. Stat. Phys.}, 147(2), 2012, 231--251.

\bibitem{DSM}
E.~De~Santis and A.~Maffei.
\newblock Perfect simulation for the infinite random cluster model, {I}sing and
  {P}otts models at low or high temperature.
\newblock {\em Probab. Theory Related Fields}, 164(1), 2016, 109-�131.

\bibitem{DMCarlo}
E.~De~Santis and C.~Marinelli.
\newblock A class of stochastic games with infinitely many interacting agents
  related to {G}lauber dynamics on random graphs.
\newblock {\em J. Phys. A}, 40(39), 2007, 11777--11790.

\bibitem{DN03}
E.~De~Santis and C.~M. Newman.
\newblock Convergence in energy-lowering (disordered) stochastic spin systems.
\newblock {\em J. Statist. Phys.}, 110(1-2), 2003, 431--442.

\bibitem{EN}
S.~M. Eckner and C.~M. Newman.
\newblock Fixation to consensus on tree-related graphs.
\newblock {\em ALEA Lat. Am. J. Probab. Math. Stat.}, 12(1), 2015, 357--374.

\bibitem{FSS}
L.~R. Fontes, R.~H. Schonmann, and V.~Sidoravicius.
\newblock Stretched exponential fixation in stochastic {I}sing models at zero
  temperature.
\newblock {\em Comm. Math. Phys.}, 228(3), 2002, 495--518.

\bibitem{GNS}
A.~Gandolfi, C.~M. Newman, and D.~L. Stein.
\newblock Zero-temperature dynamics of {$\pm J$} spin glasses and related
  models.
\newblock {\em Comm. Math. Phys.}, 214(2), 2002, 373--387.

\bibitem{Harris}
T.~E. Harris.
\newblock Additive set-valued {M}arkov processes and graphical methods.
\newblock {\em Ann. Probability}, 6(3), 1978, 355--378.

\bibitem{HNexagon}
C.~D. Howard and C.~M. Newman.
\newblock The percolation transition for the zero-temperature stochastic
  {I}sing model on the hexagonal lattice.
\newblock {\em J. Statist. Phys.}, 111(1-2), 2003, 57--72.

\bibitem{ergodic}
U.~Krengel.
\newblock {\em Ergodic theorems}, volume~6 of {\em de Gruyter Studies in
  Mathematics}.
\newblock Walter de Gruyter \& Co., Berlin, 1985.

\bibitem{Liggett}
T.~M. Liggett.
\newblock {\em Interacting particle systems}.
\newblock Classics in Mathematics. Springer-Verlag, Berlin, 2005.

\bibitem{MOS}
F.~Martinelli, E.~Olivieri, and E.~Scoppola.
\newblock On the {S}wendsen-{W}ang dynamics. {I}. {E}xponential convergence to
  equilibrium.
\newblock {\em J. Statist. Phys.}, 62(1-2), 1991, 117--133.

\bibitem{MT}
F.~Martinelli and F.~L. Toninelli.
\newblock On the mixing time of the 2{D} stochastic {I}sing model with ``plus''
  boundary conditions at low temperature.
\newblock {\em Comm. Math. Phys.}, 296(1), 2010, 175--213.

\bibitem{Mathieu}
P.~Mathieu.
\newblock Convergence to equilibrium for spin glasses.
\newblock {\em Comm. Math. Phys.}, 215(1), 2000, 57--68.

\bibitem{Morris}
R.~Morris.
\newblock Zero-temperature {G}lauber dynamics on {$\Bbb Z^d$}.
\newblock {\em Probab. Theory Related Fields}, 149(3-4), 2011, 417--434.

\bibitem{NNS00}
S.~Nanda, C.~M. Newman, and D.~L. Stein.
\newblock Dynamics of {I}sing spin systems at zero temperature.
\newblock In {\em On {D}obrushin's way. {F}rom probability theory to
  statistical physics}, volume 198 of {\em Amer. Math. Soc. Transl. Ser. 2},
  pages 183--194. Amer. Math. Soc., Providence, RI, 2000.

\bibitem{Mendes}
M.~Ostilli and J.~F.~F. Mendes.
\newblock Communication and correlation among communities.
\newblock {\em Phys. Rev. E (3)}, 80(011142), 2009.

\bibitem{Richardson}
D.~Richardson.
\newblock Random growth in a tessellation.
\newblock {\em Proc. Cambridge Philos. Soc.}, 74, 1973, 515--528.

\bibitem{Sepp}
T.~Sepp\"{a}l\"{a}inen.
\newblock {\em Translation Invariant Exclusion Processes}.
\newblock Book in progress, 2008. \newblock
  http://www.math.wisc.edu/~seppalai/excl-book/ajo.pdf.

\bibitem{spanier}
E.~H. Spanier.
\newblock {\em Algebraic topology}.
\newblock McGraw-Hill Book Co., New York-Toronto, Ont.-London, 1966.

\bibitem{Stauffer}
D.~Stauffer.
\newblock A biased review of sociophysics.
\newblock {\em J. Stat. Phys.}, 151(1-2), 2013, 9--20.

\bibitem{Tamuz}
O.~Tamuz and R.~J. Tessler.
\newblock Majority dynamics and the retention of information.
\newblock {\em Israel J. Math.}, 206(1), 2015, 483--507.

\bibitem{tessler}
R.~J. Tessler.
\newblock Geometry and dynamics in zero temperature statistical mechanics
  models.
\newblock {\em arXiv preprint arXiv:1008.5279}, 2010.

\bibitem{Williams}
D.~Williams.
\newblock {\em Probability with martingales}.
\newblock Cambridge Mathematical Textbooks. Cambridge University Press,
  Cambridge, 1991.

\end{thebibliography}

\end{document}